\documentclass[11pt,a4paper]{article}
\usepackage[blocks]{authblk}
\usepackage{graphicx} 
\usepackage{amsmath}
\usepackage{hyperref}
\usepackage{float}
\usepackage{caption}
\usepackage{amssymb}
\usepackage{comment}
\usepackage{amsthm}
\usepackage{bbold}
\usepackage{xcolor}
\usepackage{cases}

\usepackage{stmaryrd}
\usepackage{algpseudocode}
\usepackage{algorithm}
\usepackage[numbers]{natbib}

\usepackage{geometry}
\geometry{a4paper, total = {160mm, 230mm}, left = 25mm, top = 35mm}

\graphicspath{{Images/}}

\newtheorem{thm}{Theorem}[section]
\newtheorem{prop}[thm]{Proposition}

\numberwithin{equation}{section}

\theoremstyle{definition}

\newtheorem{assp}{Assumption}

\newtheorem{appxlem}{Lemma}[section]

\title{Hematopoiesis as a continuum: from stochastic compartmental model to hydrodynamic limit}

\author[1]{Vincent Bansaye
\thanks{Email: \texttt{vincent.bansaye@polytechnique.edu}}
}
\author[1]{Ana Fern\'andez Baranda
\thanks{Email: \texttt{ana.fernandez-baranda@polytechnique.edu}}
}
\author[2]{St\'ephane Giraudier
\thanks{Email: \texttt{stephane.giraudier@aphp.fr}}
}
\author[1]{Sylvie M\'el\'eard
\thanks{Email: \texttt{sylvie.meleard@polytechnique.edu}}
}
\affil[1]{{\small CMAP, Ecole Polytechnique, CNRS, Institut
    polytechnique de Paris, Inria, Palaiseau, France.}}
\affil[2]{{\small Universit\'e Paris Cit\'e, H\^opital Saint-Louis, INSERM U1131, Paris, France.}}


\begin{document}

\maketitle

\begin{abstract}
We consider a multiscale stochastic compartmental model with three types of cells (stem cells, immature cells and mature cells) which combines cell proliferation and cell differentiation. We derive a hydrodynamic limit when the number of immature compartments goes to infinity obtaining a partial differential equations system with boundary conditions, modelling hematopoiesis as a continuum. We assume that proliferation and differentiation are regulated and let the corresponding rates depend on the number of mature cells. This leads us to model the dynamics of the population by a Markov process in continuous time and discrete space, which does not satisfy the branching property. We prove the convergence in law of the stem and mature cells population size processes and of the empirical measures of the immature cells dynamics, conveniently rescaled, to the unique triplet involving coupled functions and a measure, which are solutions of a deterministic measure valued equation with boundary dynamics. The cell differentiation induces a transport term in space and the main difficulty comes from the boundary effects coming from stem and mature cells. We also prove that the limiting measure admits at each time a density with respect to Lebesgue measure and can be characterized as solution of a partial differential equation.
\end{abstract} \ \\

\noindent\textbf{Corresponding author}: Ana Fern\'andez Baranda, ana.fernandez-baranda@polytechnique.edu \\

\noindent\textbf{Keywords}: Hydrodynamic limit; pure jump Markov process; stochastic slow-fast dynamical system; hematopoiesis. \\

\newpage

\section{Introduction}
Hematopoiesis is the process by which blood cells are produced. This process starts with hematopoietic stem cells (denoted HSCs) that go through differentiation: the cells gradually become more and more specialized until they finally become mature blood cells with a specific function in the body. These immature cells can also self-renew through division, where two daughter cells are created from a mother cell; or die. The most mature myeloid cells (myeloid leucocytes, erythrocytes, platelets) do not divide and can only be created by differentiation or die with different half-life durations. As shown by \citet{Smith}, the growth and differentiation of hematopoietic immature cells are regulated by signals from the microenvironment, mainly through cytokines secreted by mature cells. 
The process of hematopoiesis has long been described as a hierarchical step-wise process. Biological observations of hematopoietic cells showed phenotype differences among the progenitor cells which allowed to create distinct groups of cells by phenotype as explained in \citet{Cheng}. In this framework, HSCs differentiate a fixed number of times, becoming at each step a different more specialized progenitor, until finally becoming mature blood cells. Regardless, this classical modeling oversimplifies the complexity that hematopoietic cells have. Indeed, by using new techniques of observation at single-cell level (single-cell RNA-Seq analysis), this hierarchical discrete structure can be questioned as presented in \citet{Quesenberry} and \cite{Cheng}. Current hypothesis concerning hematopoiesis development defined types of hematopoietic maturation stages are able to divide into more subtypes, leading to a continuum of hematopoiesis differentiation rather than a discrete model (see \citet{Wilson} and \citet{Pietras}). The authors in \citet{Zhang} and \cite{Quesenberry} suggest that hematopoiesis is a continuum process where the phenotype of stem cells widely fluctuates over time, giving a heterogeneity among cells rather than clearly defined homogeneous phenotypes that allow to separate into different cell types. There are no obvious boundaries between the different hierarchical levels of hematopoietic cells. Moreover, \cite{Quesenberry} explained that the phenotype of a cell changed during the cell cycle which would not allow for a clear separation of cells into distinct categories. All of these arguments suggest that rather than being a step-wise differentiation, it is a progressive process. Then, rather than having different maturation cell types, we would be talking about maturation level, where the most immature cells are HSC and the final stage is mature cells. 
The emergence of HSC that represent the upper compartment of the hematopoietic tree is also originating during embryogenesis from the AGM region where very few cells (mainly 1 to 3) (see, for example, \citet{Kumaravelu}, \citet{Rybtsov} and \citet{Tober}) emerge, migrate to the liver where the cells expand and give rise to HSC. Regardless, the precise mechanisms, number of cells at each step, and death rates at each step of this process are not well known. Then, we firstly hypothesized a infinite number of steps from HSC to mature cells i.e. a continuum in differentiation process.\\

Previous modeling of hematopoiesis has been widely based on finite compartmental models where each compartment represents a distinct type of cell corresponding to the different maturation stages. This compartmental approach can be seen, for example, in \citet{Bonnet}, \citet{Dingli}, \citet{Knauer}, \citet{MonPere} and \citet{Stiehl}. In all of these models, differentiation is directly linked to division. The hypothesis of regulation is often included in modeling as in \cite{Knauer}, \cite{MonPere} and \cite{Bonnet} through a feedback loop depending on the number of mature cells. While all of these models consider at least three compartments corresponding to HSCs, immature cells and mature cells, the number of immature compartments varies from one to the other depending on the phenomenon to study. Indeed, while in \cite{Knauer} they consider that a single immature cell type is enough to properly explain oscillatory dynamics, in \cite{Bonnet} they prove that 6 compartments at least are necessary to explain the amplification in the erythroid lineage and in \cite{Dingli} they suggest that up to 31 types are needed for amplification. \\

In this article, we derive a deterministic model of an hematopoiesis continuum from a stochastic compartmental model. Starting from a continuous in time pure jump Markov process with self-renewal and cell differentiation, we study the limiting behavior when the number of immature compartments goes to infinity. In this scenario, division and differentiation are separate phenomena and follow slow-fast dynamics where division and death rates are much smaller than differentiation rates. Indeed, it is well known that the number of cell divisions from stem to mature cells is finite, whereas differentiation becoming a continuous process, the biological changes in the cell's state become smaller and smaller and happen faster and faster. The renewal and differentiation rates both depend on the number of mature cells (as motivated previously), making the model nonlinear. The stochastic system depends on a scaling parameter $N$ which scales both the (large) order of magnitude of the stem cells population size as well as the number of compartments. We consider separately, on one hand the stem and mature cells dynamics $( X_1^N(t),\, t\ge 0)$ and $( X_N^N(t),\, t\ge 0)$ which are involved in division (for the first ones) and deaths  (for the last ones), and on the other hand the immature compartments dynamics in large number $( X_i^N(t), \, t\ge 0\,;\, 2\le i\le N-1)$, which depend on the fast differentiation rates. The latter are gathered in an empirical measure-valued process on spatial sites corresponding to the compartment levels, defined by \begin{gather*}
    \mu_t^{N} = \, \frac{1}{N} \, \sum_{i = 2}^{N - 1} X_i^{N} (t) \, \delta_{i/N}.
\end{gather*} This results in three coupled processes $\left( X_1^N(t), \, \mu_t^N, \, X_N^N(t) \right)_{t \geq 0}$, one describing the stem cells, another for the immature cells, and a third for the mature cells. When doing so, we find a boundary condition issue: the dependence of the mature cells dynamics on the final immature compartment $X^N_{N-1}$. We overpass this difficulty by finding an equation only depending on the processes of the triplet $\left( X_1^N(t), \, \mu_t^N, \, X_N^N(t) \right)_{t \geq 0}$. Our main result is Theorem \ref{thm:conv}, stating that these dynamics converge, as $N$ tends to infinity, to a deterministic nonlinear system $((a(t), \mu_t, z(t)), \, t\ge 0)$ composed of two ordinary differential equations satisfied by $a$ and $z$ describing limiting stem and mature dynamics, and a measure-valued equation satisfied by $\mu$ describing the immature dynamics for the continuum of compartments. We obtain this convergence through a tightness-identification-uniqueness procedure. We first use the semimartingale decomposition of the processes to prove tightness results. We characterize the limiting values as solutions of the limiting system. Differentiation behaves now as a transport term in the measure-valued equation. The uniqueness in the space of measure-valued functions was a difficult point. To prove it, we take into account the associated mild equation obtained by using the flow of the ordinary differential equation representing the cell maturation dynamics. This mild formulation is also used to prove that when the initial empirical measure admits a density, then this property is preserved over time. \\

Similar limiting behaviors have been widely studied for interacting particle systems over $\mathbb{Z}^d$ when both the number of particles and the number of sites go to infinity and the number of particles is constant, see for example \citet{Kipnis} and \citet{DeMasi}.
Let us also mention \citet{Blount} and \citet{Perrut} which include birth and deaths in each site, for respectively one and two species with competition, and whose hydrodynamic limit is a reaction diffusion process. 
Up to our knowledge, much less work has been achieved for hydrodynamic limits combining birth and death processes, transport and boundary effects. We refer to \citet{Bahadoran} for a model where the birth and death appear in reservoirs located at the boundary. Let us also observe that when the interactions are neglected, our process with $N$ compartments is a $N$ types branching process, as studied in \citet{Athreya}, \citet{Mode}, \citet{Kimmel}. We are interested here in the approximation of the process for a large number of types $(N\rightarrow \infty)$. Taking into account the regulation, the branching property is broken and the techniques we use involve tightness of the sequence of processes and identification of the limit rather than the branching structure of the process. \\

\citet{Doumic} studied the continuous limiting values of a deterministic compartmental model for cellular differentiation, with a feedback loop in the differentiation. Their work provided the framework for limiting continuous models for cellular differentiation. In contrast, we present a stochastic compartmental model that describes the dynamics of the population of cells based on the individual cell behavior and consider regulation in both the differentiation and regulation rates. Besides, we extend the space of solutions for the limiting system to the set of finite non-negative measures on $[0, 1]$ that do not necessarily have a density. The relation between our work and \cite{Doumic} is done in the particular case where densities exist at each time for the limiting measures $\mu_t$. Our result proves the uniqueness of the limiting values obtained in \cite{Doumic}. \\ 

The structure of the paper is as follows. In Section \ref{sec:disc}, we construct the discrete jump process for any fixed number $N$ of compartments including the feedback mechanism on the differentiation and division rates. Section \ref{sec:cont} is devoted to the proof of the convergence result. In Section \ref{sec:den}, we show that if the initial condition of the measure gathering the immature compartments admits a density, then, this property propagates through time, allowing us to find an associated PDE system with two boundary ODEs. In Section \ref{sec:sim}, we illustrate our results with simulations for both the stochastic and deterministic models. 

\bigskip
\textbf{Notation}: $\mathcal{C}(S)$ and $\mathcal{C}^1(S)$ denote respectively the sets of continuous and continuously differentiable functions on the set $S$. 

The set of finite non-negative measures on $[0,1]$ endowed with the weak topology is denoted by $(M_F,w)$. Note that this topology is metrizable with the following metric. Let $BL(S)$ be the set of bounded Lipschitz continuous functions on the set $S$ with the norm 
\begin{equation*}
   \forall g \in BL(S), \quad || g || = \sup_{x \in S} |g(x)| + \sup_{x, y \in S, \, x \neq y} \frac{|g(x) - g(y)|}{|x - y|} ,
\end{equation*} 
and define the unit ball
\begin{equation*}
    BL_1(S) = \{ g \in BL(S), \, || g || \leq 1 \}.
\end{equation*}
For two measures $\nu^1, \, \nu^2 \in \mathcal{M}_F$, we define 
\begin{equation*}
    ||\nu^1 - \nu^2 ||_{BL} = \sup_{g \in BL_1([0, 1])} | \left< \nu^1 - \nu^2, g \right> |.
\end{equation*}
    
$C$ and $C_T$ will denote respectively a positive constant that can change from line to line and a positive constant depending on $T$ that can change from line to line. Both of these constants can depend on the parameters of the model, but not on the scaling parameter $N$ nor on any chosen test function.

\section{Discrete model for cellular differentiation} \label{sec:disc}
In this model, we characterize a cell by a quantitative trait $x \in [0, 1]$ representing its maturation level. A maturation level of $x = 0$ corresponds to HSC, one of $x = 1$ to mature cells and for $x \in (0, 1)$ we are talking about immature cells. We consider initially that there are $N$ different cell types from HSCs to mature cells. A cell of type $i \in \{1, ..., N \}$ will have a maturation level of $x = i/N$. The parameter $N$ also defines the speed of differentiation in immature compartments. Indeed, the more maturation stages the cell has to pass by before becoming a mature cell, the faster it has to go through them in order to maintain the time scale of mature cells creation of order one. We define a discrete jump process $(X(t), \, t \geq 0) = ((X_1(t), ..., X_N(t)))_{t \geq 0}$ where $X_i(t)$ is the number of cells of type $i$ at time $t$. The goal is to study the limiting behavior of this system when $N \rightarrow \infty$. \\ \, \\
We introduce the functions $(x,z) \to r(x,z)$, $(x,z) \to m(x,z)$ defined on $[0,1]\times [0, \infty)$ that correspond to the division and differentiation rates for a cell of maturation level $x$ when the mature population is of size $z$. The dependence on the mature population size gathers the regulation factors on the division and differentiation rates. In the whole paper, we work with the following assumption.
\begin{assp} \label{assp:rm}
    We assume that the functions $r$ and $m$ are continuous and non-negative functions, that $m$ belongs to $\mathcal{C}^1([0, 1] \times [0, \infty))$, and that there exist constants $\widehat{r}, \, \widehat{m}, \, m_{min} \in \mathbb{R}^*_+$ such that for all $(x, z) \in [0, 1] \times[0, \infty)$,
    \begin{equation*}
        0 \leq r(x, z) \leq \widehat{r}, \quad m_{min} \leq m(x, z) \leq \widehat{m}.
    \end{equation*}
    Furthermore, we assume that there exist $L_m > 0$ and $L_r > 0$ such that for any $x_1, \, x_2 \in [0,1]$ and any $z_1, \, z_2 \in [0, \infty)$,
    \begin{gather*}
        |m(x_1, z_1) - m(x_2, z_2)| \leq L_m \, \left(|x_1 - x_2| + |z_1 - z_2| \right), \\
        \left| \frac{\partial}{\partial x} m(x_1, z_1) - \frac{\partial}{\partial x}m(x_2, z_2) \right| \leq L_m \, \left(|x_1 - x_2| + |z_1 - z_2| \right), \\
        \quad |r(x_1, x_1) - r(x_2, z_2)| \leq L_r \, \left(|x_1 - x_2| + |z_1 - z_2| \right).
    \end{gather*}
\end{assp} \, \\
The boundedness by above of the division and differentiation rates will ensure, in particular, the control of the first moment and the non-explosion of the stochastic process, while the Lipschitz continuity conditions on $m$, $\partial m/ \partial x$ and $r$ will guarantee uniqueness of the deterministic limit objects. In order to ensure creation of mature cells, the differentiation rate function has to be bounded by below by some $m_{min} > 0$. Indeed, if this was not the case and $m(x, z) = 0$ for a fixed $z \in [0, \infty)$ and some $x \in [0, 1]$, then the cells may not be able to attain a maturation level greater than $x$. \\ \, \\
For the $N$ subpopulations of cells we have the following transitions:
\begin{itemize}
    \item Each cell of type $i$, $(1 \leq i \leq N - 1)$ divides at rate $r(i/N, X_N / N)$ creating two cells of same type,
    \item Each cell of type $1$ becomes a cell of type $2$ at rate $m(1/N, X_N/N)$,
    \item Each cell of type $i$, $(2 \leq i \leq N - 1)$ becomes a cell of type $i + 1$ at rate $Nm(i/N, X_N/N)$,
    \item Each cell of type $N$ dies at rate $d$,
\end{itemize}
where $d$ is a positive constant. For simplicity, we did not consider deaths in immature compartments and the parameter $d$ is not regulated by the mature size population. We believe that these dynamics could be included by directly adapting the proofs presented here. \\ \, \\
Since the differentiation rate of the immature compartments is accelerated by the number of compartments, each of these immature compartment both fills and empties very fast. Then, the population size of stem and mature cells are of same order of magnitude. This assumption is not contradictory with the amplification phenomenon observed between HSCs and mature cells compartments (see simulations in Figure \ref{fig:disc}). We focus on the case where the population sizes of stem and mature cells are of order $N$. Hence, the $N - 2$ immature compartments are expected to have an order of magnitude of $1$. Thus, we consider the scaling
\begin{equation*}
    X_1^{N}(t) = \frac{X_1(t)}{N}, \quad X_i^{N}(t) = X_i(t), \quad X_N^{N}(t) = \frac{X_N(t)}{N}.
\end{equation*}\ \\
We are thus working with a multitype density dependent pure jump Markov process $(X^N(t),\, t\ge 0)$, whose trajectorial representation is given   using Poisson point measures. Let $( \mathcal{N}_i^j: 1 \leq i \leq N, \, j \in \{r, m, d \})$ be independent Poisson point measures with intensity $ds \, d\theta$ on $\mathbb{R}_+^2$. We introduce $(\mathcal{F}_t)_t$ as the canonical filtration generated by the initial condition $X^N(0)$ and these Poisson point measures. Then, the process $X^N(t)$ is the unique strong solution in $\mathbb{D} \left( [0, T], \, {\frac{\mathbb{N}}{N}}\times \mathbb{N}^{N-2} \times {\frac{\mathbb{N}}{N}} \right)$, for all $T > 0$, of the following system of stochastic differential equations
\begin{align} \label{eq:stoch}
    X_1^N(t) =& \, X_1^N(0) + \frac{1}{N} \, \int_0^t \int_{\mathbb{R}^+} \mathbb{1}_{\theta \leq N \, r(1/N, X_N^N(s^-)) \, X_1^N(s^-)} \, \mathcal{N}_1^r(ds, \, d\theta)\nonumber \\&- \frac{1}{N} \, \int_0^t \int_{\mathbb{R}^+} \mathbb{1}_{\theta \leq N \, m(1/N, X_N^N(s^-)) \, X_1^N(s^-)} \, \mathcal{N}_1^m(ds, \, d\theta) \ ; \\ \label{eq:stoch2}
    X_i^N(t) =& \, X_i^N(0) + \int_0^t \int_{\mathbb{R}^+} \mathbb{1}_{\theta \leq r(i/N, X_N^N(s^-)) \, X_i^N(s^-)} \, \mathcal{N}_i^r(ds, \, d\theta) \nonumber\\ &+ \int_0^t \int_{\mathbb{R}^+} \mathbb{1}_{\theta \leq N \, m((i - 1)/N, X_N^N(s^-)) \, X_{i - 1}^N(s^-)} \, \mathcal{N}_{i - 1}^m(ds, \, d\theta)\nonumber \\&- \int_0^t\int_{\mathbb{R}^+} \mathbb{1}_{\theta \leq N \, m(i/N, X_N^N(s^-)) \, X_i^N(s^-)} \, \mathcal{N}_i^m(ds, \, d\theta), \quad 2 \leq i \leq N - 1 \ ; \\\label{eq:stoch3}
    X_N^N(t) =& \, X_N^N(0) + \frac{1}{N} \, \int_0^t \int_{\mathbb{R}^+} \mathbb{1}_{\theta \leq N \, m((N - 1)/N, X_N^N(s^-)) \, X_{N - 1}^N (s^-)} \, \mathcal{N}_{N - 1}^m(ds, \, d\theta) \nonumber \\&- \frac{1}{N} \, \int_0^t \int_{\mathbb{R}^+} \mathbb{1}_{\theta \leq N \, d \, X_N^N(s^-)} \, \mathcal{N}_N^d(ds, \, d\theta).
\end{align}
By considering the compensated martingale measures of $(\mathcal{N}_i^j, 1 \leq i \leq N\,, \,j \in \{ r, m, d\})$, we can write the semimartingale decomposition of these equations for $ 2 \leq i \leq N - 1 $ as

\begin{subnumcases}{} \label{eq:disc}
    X^N_1(t) = & \, $X^N_1(0) + \int_0^t \left[ r\left( \frac{1}{N}, \, X_N^N(s) \right) - m\left( \frac{1}{N}, \, X_N^N(s) \right) \right] X^N_1(s) \, ds + M_1^N(t)\ $ ; \\
    \,\nonumber \\\label{eq:discb}
    X_i^N(t) = & $\, X_i^N(0) + \int_0^t N \, m\left( \frac{ i - 1}{N}, \, X_N^N(s) \right) X^N_{i - 1}(s) \, ds$ \nonumber \\
    \ & $+ \int_0^t \left[ r\left( \frac{i}{N}, \, X_N^N(u) \right) - N \, m\left( \frac{i}{N}, \, X_N^N(s) \right) \right] X_i^N(s) \, ds + M_i^N(t)$ ;\\
    \, \nonumber \\ \label{eq:discc}
    X_N^N(t) =& $\, X_N^N(0) + \int_0^t m\left( \frac{N - 1}{N}, \, X_N^N(s) \right) X^N_{N - 1}(s) \, ds - d \, \int_0^t X_N^N(s) \, ds + M_N^N(t)$,
\end{subnumcases}

where the processes $\,(M_j^N(t), 1 \leq j \leq N)_{t \geq 0}\,$ are square-integrable martingales whose quadratic variation can be easily computed (see Lemma \ref{lem:mart} in Appendix).

\medskip
In order to study the asymptotic behavior of the immature compartments of type $2 \leq i \leq N - 1$ which are in a different scaling, we consider the stochastic measure $\mu_t^N$ defined for any $t \geq 0$ as 
\begin{gather}\label{empmes}
    \mu_t^{N} = \, \frac{1}{N} \, \sum_{i = 2}^{N - 1} X_i^{N} (t) \, \delta_{i/N}.
\end{gather}
In all what follows, we will work with the triplet $\left( X_1^N(t), \, \mu_t^N, \, X_N^N(t) \right)_{t \geq 0}$ composed of the empirical measure containing all the immature compartments, and the boundaries that correspond to the cell and mature cells population sizes. From System (\ref{eq:disc}), we can write the semimartingale decomposition. The equations are written in such a way that they only depend on elements on the triplet and not on any $X_i^N$ for $2 \leq i \leq N - 1$.
\begin{prop} \label{prop:sde}
    Under Assumption \ref{assp:rm}, for any function $f \in \mathcal{C}([0, 1])$, for any $t \geq 0$, we have
    \begin{subequations} \label{eq:cont}
        \begin{alignat}{3}
            X_1^{N}(t) &= \, X_1^{N}(0) + A^N_1 + M_1^{N}(t) \\
            \left< \mu_t^N, f \right> &= \, \left< \mu_0^N, f \right> + A^{N, f} (t) + M^{N, f}(t) \\
            X_N^{N}(t) &= \, X_N^{N}(0) + A^N_{N}(t) + M^{N}(t)\, ,
        \end{alignat}
    \end{subequations}
    where 
    \begin{subequations} \label{eq:A}
        \begin{alignat}{5}
            A^N_{1}(t) =& \int_0^t \left[ r\left( \frac{1}{N}, \, X_N^N(s) \right) - m\left( \frac{1}{N}, \, X_N^N(s) \right) \right] X_1^{N}(s) \, ds \, ;\\
            A^{N, f}(t) =& \, f \left(\frac{2}{N}\right) \int_0^t m\left( \frac{1}{N}, \, X_N^N(s) \right) X_1^N(s) \, ds \nonumber \\
            &+ \int_0^t \left< \mu_s^N, f \, r( \cdot, \, X_N^N(s)) \right> \, ds + \int_0^t \left< \mu_s^N, \left(\Delta_{1/N} f\right) \, m( \cdot, \, X_N^N(s)) \right> \, ds \\
            & - f \left( 1 \right) \left[ \left< \mu_0^N, 1 \right> - \left< \mu_t^N, 1 \right> + \int_0^t \left< \mu_s^N, r(\cdot, \, X_N^N(s)) \right> \, ds + \int_0^t m \left( \frac{1}{N}, \, X_N^N(s) \right) X_1^N(s) \, ds \right] \, ; \nonumber\\
            A^N_N(t) =& \left< \mu_0^N, 1 \right> - \left< \mu_t^N, 1 \right> + \int_0^t \left< \mu_s^N, r( \cdot, \, X_N^N(s)) \right> \, ds \nonumber \\
            &+ \int_0^t m\left( \frac{1}{N}, \, X_N^N(s) \right) X_1^N(s) \, ds - d \, \int_0^t X_N^{N}(s) \, ds \,,
        \end{alignat}
    \end{subequations}
    $\Delta_{1/N} f$ corresponds to the discrete derivative of $f$ defined by
    \begin{equation*}
        \Delta_{h} f(x) = \, \frac{f(x + h) - f(x)}{h},
    \end{equation*}
    and the square-integrable martingales $M^{N,f}$, $M^N$ are given by
    \begin{align*}
         M^{N,f} &= \frac{1}{N} \sum_{i = 2}^{N - 1} \left( f \left( \frac{i}{N} \right) - f(1) \right) M_i^N \\
         M^{N} &= \frac{1}{N} \sum_{i = 2}^{N - 1} M_i^N + M_N^N.
    \end{align*}
\end{prop}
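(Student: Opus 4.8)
The plan is to derive everything from the semimartingale system \eqref{eq:disc}, which itself follows from the trajectorial representation \eqref{eq:stoch}--\eqref{eq:stoch3} by replacing each Poisson measure by its compensated measure plus its intensity and integrating out the $\theta$ variable; the first line of \eqref{eq:disc} is already the statement for $X_1^N$ with $A_1^N$ as in \eqref{eq:A}. For the measure-valued line I would multiply \eqref{eq:discb} by $f(i/N)/N$ and sum over $2\le i\le N-1$. The reaction terms sum to $\int_0^t\langle\mu_s^N,f\,r(\cdot,X_N^N(s))\rangle\,ds$ and the martingale terms to $\frac1N\sum_{i=2}^{N-1}f(i/N)M_i^N(t)$. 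In the differentiation terms, re-indexing the influx sum by $j=i-1$ and telescoping it against the outflux sum leaves three pieces: the left boundary term $f(2/N)\int_0^t m(1/N,X_N^N(s))X_1^N(s)\,ds$, the bulk term $\frac1N\sum_{i=2}^{N-2}(\Delta_{1/N}f)(i/N)\int_0^t m(i/N,X_N^N(s))X_i^N(s)\,ds$, and the right boundary term $-f((N-1)/N)\int_0^t m((N-1)/N,X_N^N(s))X_{N-1}^N(s)\,ds$.

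The key step is to rewrite this last term so that it only involves the triplet. Since $\tfrac1N(\Delta_{1/N}f)((N-1)/N)=f(1)-f((N-1)/N)$, I can write $-f((N-1)/N)=\tfrac1N(\Delta_{1/N}f)((N-1)/N)-f(1)$, so the right boundary term supplies exactly the missing $i=N-1$ summand that upgrades the bulk term to $\int_0^t\langle\mu_s^N,(\Delta_{1/N}f)\,m(\cdot,X_N^N(s))\rangle\,ds$, plus the leftover $-f(1)\int_0^t m((N-1)/N,X_N^N(s))X_{N-1}^N(s)\,ds$, which is the only quantity still referring to $X_{N-1}^N$. To remove it I would use the conservation identity obtained by taking $f\equiv1$ above (equivalently, by summing \eqref{eq:discb} over all immature compartments, whose differentiation terms then telescope completely), namely
\begin{equation*}
\int_0^t m\!\left(\tfrac{N-1}{N},X_N^N(s)\right)X_{N-1}^N(s)\,ds=\langle\mu_0^N,1\rangle-\langle\mu_t^N,1\rangle+\int_0^t m\!\left(\tfrac1N,X_N^N(s)\right)X_1^N(s)\,ds+\int_0^t\langle\mu_s^N,r(\cdot,X_N^N(s))\rangle\,ds+\tfrac1N\sum_{i=2}^{N-1}M_i^N(t).
\end{equation*}
Substituting this identity back into the expression for $\langle\mu_t^N,f\rangle$ reproduces the drift $A^{N,f}$ of \eqref{eq:A} and isolates the martingale $M^{N,f}=\frac1N\sum_{i=2}^{N-1}(f(i/N)-f(1))M_i^N$; substituting the same identity into \eqref{eq:discc} turns $\int_0^t m((N-1)/N,\cdot)X_{N-1}^N\,ds$ into that right-hand side and yields $A_N^N$ together with $M^N=\frac1N\sum_{i=2}^{N-1}M_i^N+M_N^N$, which completes the derivation of \eqref{eq:cont}.

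Finally I would check that $M^{N,f}$ and $M^N$ are square-integrable $(\mathcal{F}_t)$-martingales. Each $M_i^N$ is by construction a stochastic integral against a compensated Poisson measure, hence a local martingale with the predictable quadratic variation computed in Lemma \ref{lem:mart}; the boundedness of $r$ and $m$ in Assumption \ref{assp:rm} gives the usual first-moment control on $\sum_i X_i^N$, which makes these brackets integrable, so the $M_i^N$ are true square-integrable martingales. Since $f$ is bounded on $[0,1]$, the finite linear combinations $M^{N,f}$ and $M^N$ inherit both properties. The only genuinely delicate point of the argument is the right-boundary bookkeeping — recognising that the coefficient $-f((N-1)/N)$ splits as above so as to both complete the discrete-derivative integral over all of $\mu_s^N$ and leave precisely the term that the conservation identity re-expresses through the triplet; everything else is reindexing, telescoping and compensation of Poisson integrals.
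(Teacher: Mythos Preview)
Your proposal is correct and follows essentially the same route as the paper: multiply \eqref{eq:discb} by $f(i/N)/N$, sum, telescope the differentiation terms, split the right-boundary coefficient $-f((N-1)/N)$ to complete the discrete-derivative sum and leave $-f(1)\int_0^t m((N-1)/N,\cdot)X_{N-1}^N$, then eliminate this last term via the conservation identity \eqref{eq:XN-1} obtained by summing \eqref{eq:discb} (equivalently, by taking $f\equiv 1$), and finally substitute the same identity into \eqref{eq:discc}. The paper presents the splitting step slightly differently (it extends the telescoped sum to $i=N-1$ and compensates with $-f(1)$ in one stroke, which is exactly your observation $-f((N-1)/N)=\tfrac1N(\Delta_{1/N}f)((N-1)/N)-f(1)$), but the argument is identical.
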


\begin{proof}
    The equation for $X_1^N(t)$ comes directly from (\ref{eq:disc}). Let us consider now
    $$ \left< \mu_t^N, f \right> = \, \frac{1}{N}\sum_{i = 2}^{N - 1} X_i^N(t) \, f \left( \frac{i}{N} \right)$$
     and use (\ref{eq:discb}). We deduce 
    \begin{align}
       \left< \mu_t^N, f \right> =& \, \frac{1}{N} \sum_{i = 2}^{N - 1} f \left( \frac{i}{N} \right) X_i^{N}(0) + \int_0^t \frac{1}{N} \sum_{i = 2}^{N - 1} f \left( \frac{i}{N} \right) r\left( \frac{i}{N}, \, X_N^N(s) \right) X_i^{N}(s) \, ds \nonumber\\
        &+ f \left( \frac{2}{N} \right) \int_0^t m\left( \frac{1}{N}, \, X_N^N(s) \right)  X_{1}^{N}(s) \, ds \nonumber \\
        &+ \int_0^t \frac{1}{N} \sum_{i = 2}^{N - 1} \left( f \left( \frac{i + 1}{N} \right) - f \left( \frac{i}{N} \right) \right) N \, m\left( \frac{i}{N}, \, X_N^N(s) \right) X_{i}^{N}(s) \, ds \nonumber\\
        &- f \left( 1 \right) \int_0^t m\left( \frac{N - 1}{N}, \, X_N^N(s) \right) X_{N - 1}^{N}(s) \, ds  + M^{N, f}(t) \nonumber \\
        =& \, \left< \mu_0^N, f \right> + \int_0^t \left< \mu_s^N, f \, r \left(\cdot, \, X_N^N(s) \right) \right> \, ds + f \left( \frac{2}{N} \right) \int_0^t  m\left( \frac{1}{N}, \, X_N^N(s) \right) X_{1}^{N}(s) \, ds \nonumber \\
        &+ \int_0^t \left< \mu_s^N, \Delta_{1/N}f \, m\left( \cdot, \, X_N^N(s) \right)  \right> ds \nonumber \\
        &- f \left( 1 \right) \int_0^t m\left( \frac{N - 1}{N}, \, X_N^N(s) \right) X_{N - 1}^{N}(s) \, ds +\frac{1}{N} \sum_{i = 2}^{N - 1} f \left( \frac{i}{N} \right) M_i^N(t). \label{ThebigEq}
    \end{align} \\ \, \\
    As we want the equations to only depend on elements of the triplet $(X^N_1, \mu^N, X^N_n)$, we need to get rid of the term involving $X_{N - 1}^N(t)$ in $\left< \mu_t^N, f \right>$. Note that for $2 \leq i \leq N- 1$, from (\ref{eq:discb}), we have that
    \begin{align} \label{eq:mXi}
        N \int_0^t m\left( \frac{i}{N}, \, X_N^N(s) \right) X_i^N(s) \, ds =& \, X_i^N(0) - X_i^N(t) + N \int_0^t m\left( \frac{i - 1}{N}, \, X_N^N(s) \right) X_{i - 1}^N(s) \, ds \nonumber\\
        &+ \int_0^t r\left( \frac{i}{N}, \, X_N^N(s) \right) X_i^N(s) \, ds + M_i^N(t).
    \end{align}
    We apply this formula for $2 \leq i \leq N -1$, we sum the corresponding equations and then divide both sides by $N$ to finally obtain that
    \begin{align} \label{eq:XN-1}
        \int_0^t m\left( \frac{N - 1}{N}, \, X_N^N(s) \right) X_{N -1}^N(s) \, ds =& \, \left< \mu_0^N, 1 \right> - \left< \mu_t^N, 1 \right> + \int_0^t m\left( \frac{1}{N}, \, X_N^N(s) \right) X_1^N(s) \, ds \nonumber \\
        &+ \int_0^t \left< \mu_s^N, r \left( \cdot, \, X_N^N(s) \right) \right> \, ds + \frac{1}{N} \sum_{i = 2}^{N - 1} M_i^N(t) .
    \end{align}
 Plugging \eqref{eq:XN-1} in \eqref{ThebigEq} yields (\ref{eq:cont}b) and (\ref{eq:A}b).
    Finally, for $X_N^N(t)$ we replace the term involving $X_{N - 1}^N(t)$ in (\ref{eq:discc}) using (\ref{eq:XN-1}).
\end{proof} \, \\
The quadratic variations of the martingales in System (\ref{eq:cont}) are given below.
\begin{prop} \label{prop:mart}
    The martingales $M^N_1$, $M^{N,f}$ and $M^N$ in Proposition \ref{prop:sde} are square-integrable and their quadratic variations are
    \begin{align*}
        \left< M_1^N \right>_t =& \, \frac{1}{N} \int_0^t \left( r\left( \frac{1}{N}, \, X_N^N(s) \right) + m\left( \frac{1}{N}, \, X_N^N(s) \right) \right)  X_1^{N}(s) \, ds\, ;\\
        \left< M^{N,f} \right>_t =& \, \frac{1}{N} \left[ \int_0^t \left< \mu_s^N, (f - f(1))^2 \, r\left( \cdot, \, X_N^N(s) \right) \right> \, ds + \frac{1}{N} \int_0^t \left< \mu_s^N, \left( \Delta_{1/N} f \right)^{2} \, m\left( \cdot, \, X_N^N(s) \right) \right> \, ds \right. \\
        & \, \, \, \, \, \, \, \left. + \left(  f \left(\frac{2}{N}\right) - f(1) \right)^2 \int_{0}^{t} m\left( \frac{1}{N}, \, X_N^N(s) \right) X_1^N(s) \, ds \right] ; \\
        \left< M^N \right>_t =& \, \frac{1}{N} \left[ \int_0^t \left< \mu_s^N, r\left( \cdot, \, X_N^N(s) \right) \right> \, ds + \int_0^t m\left( \frac{1}{N}, \, X_N^N(s) \right) X_{1}^N (s) \, ds + d \int_0^t X_N^{N}(s) \, ds \right] \,;\\
        \left< M_1^N, M^{N,f} \right>_t =&  \frac{1}{N} \left(  f(1)- f \left( \frac{2}{N} \right) \right) \int_0^t m \left( \frac{1}{N}, \, X_N^N(s) \right) X_1^N(s) \, ds \,;\\
        \left< M^{N,f}, M^N \right> =& \frac{1}{N} \left[ \int_0^t \left< \mu_s^N, \left(f(1)- f \left( \frac{i}{N} \right)  \right) \, r \left( \cdot, \, X_N^N(s) \right) \right> \, ds \, \right. \\
        & \ \ \ \ \ \ \ \ \left. + \int_0^t \left(f(1)- f \left( \frac{2}{N} \right)  \right) \, m \left( \frac{1}{N}, \, X_N^N(s) \right) X_1^N(s)\right] \, ds \,;\\
        \left< M_1^N, M^N \right>_t =& - \frac{1}{N} \int_0^t m \left( \frac{1}{N}, \, X_N^N(s) \right) X_1^N(s) \, ds\,,
    \end{align*}
     for any $t \geq 0$ and any $f \in \mathcal{C}([0, 1])$.
\end{prop}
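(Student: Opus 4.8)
The plan is to reduce all six identities to the elementary rule that, for a predictable rate $\lambda$, the compensated integral of $\mathbb{1}_{\{\theta\le\lambda(s^-)\}}$ against a Poisson point measure with intensity $ds\,d\theta$ is a square-integrable martingale with angle bracket $\int_0^t\lambda(s)\,ds$ (the indicator is its own square), that two such integrals driven by independent Poisson measures are orthogonal, and that two driven by the \emph{same} measure with the same rate have cross-bracket $\int_0^t\lambda(s)\,ds$. These facts are precisely what Lemma~\ref{lem:mart} records for the $M_i^N$, so the remaining work is bookkeeping.

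First I would rewrite the elementary martingales of System~(\ref{eq:disc}) as linear combinations of mutually independent compensated Poisson integrals read off from (\ref{eq:stoch})--(\ref{eq:stoch3}). Write $\widetilde R_i$ for the compensated division integral in compartment $i$ (driven by $\mathcal{N}_i^r$), $\widetilde Q_j$ for the compensated differentiation flux from compartment $j$ to compartment $j+1$ (driven by $\mathcal{N}_j^m$, whose rate carries the extra factor $N$), and $\widetilde S$ for the compensated death integral (driven by $\mathcal{N}_N^d$). Then
\[
 M_1^N=\tfrac1N\big(\widetilde R_1-\widetilde Q_1\big),\qquad M_i^N=\widetilde R_i+\widetilde Q_{i-1}-\widetilde Q_i\ \ (2\le i\le N-1),\qquad M_N^N=\tfrac1N\big(\widetilde Q_{N-1}-\widetilde S\big),
\]
so each flux $\widetilde Q_j$ appears in exactly two of the $M_i^N$ with opposite signs (a differentiation event empties one compartment and fills the next). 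The identity for $\langle M_1^N\rangle$ then follows immediately.

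Next I would substitute this decomposition into $M^{N,f}=\tfrac1N\sum_{i=2}^{N-1}(f(\tfrac iN)-f(1))M_i^N$ and $M^N=\tfrac1N\sum_{i=2}^{N-1}M_i^N+M_N^N$ from Proposition~\ref{prop:sde} and perform an Abel summation: the interior fluxes telescope, so $N M^N=\sum_{i=2}^{N-1}\widetilde R_i+\widetilde Q_1-\widetilde S$ (every $\widetilde Q_j$ with $j\ge 2$ drops out, $\widetilde Q_{N-1}$ cancelling against the contribution of $M_N^N$), while $N M^{N,f}$ becomes $\sum_{i=2}^{N-1}(f(\tfrac iN)-f(1))\widetilde R_i$ plus a $\widetilde Q_1$-term with coefficient $f(\tfrac2N)-f(1)$, plus interior $\widetilde Q_j$-terms with coefficient $f(\tfrac{j+1}N)-f(\tfrac jN)=\tfrac1N\Delta_{1/N}f(\tfrac jN)$, plus a boundary $\widetilde Q_{N-1}$-term whose coefficient $f(\tfrac{N-1}N)-f(1)$ equals $-\tfrac1N\Delta_{1/N}f(\tfrac{N-1}N)$ and hence merges into the same sum. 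Then, expanding each bracket by bilinearity, keeping only the diagonal $\widetilde R_i,\widetilde Q_j,\widetilde S$ contributions by orthogonality, inserting the elementary brackets, recognising $\tfrac1N\sum_{i=2}^{N-1}X_i^N(s)\,\phi(\tfrac iN)=\langle\mu_s^N,\phi\rangle$ and dividing back by the appropriate powers of $N$ yields the stated formulas for $\langle M^{N,f}\rangle$, $\langle M^N\rangle$, $\langle M_1^N,M^{N,f}\rangle$, $\langle M^{N,f},M^N\rangle$ and $\langle M_1^N,M^N\rangle$.

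There is no genuine obstacle — the computation is elementary — but it needs care in two places. First, the powers of $N$: the rate of $\widetilde Q_j$ carries an extra $N$, which is exactly what turns $\tfrac1N\sum$ into $\langle\mu^N,\cdot\rangle$ and what produces the additional $\tfrac1N$ factor in front of the $(\Delta_{1/N}f)^2$ term in $\langle M^{N,f}\rangle_t$. Second, the two boundary fluxes: $\widetilde Q_1$ involves $X_1^N$ rather than a site of the empirical measure, and so produces all the terms containing $m(1/N,X_N^N)X_1^N$; and $\widetilde Q_{N-1}$ simultaneously disappears from $M^N$ and, inside $M^{N,f}$, carries the value $f(1)$ that entered through the elimination of $X_{N-1}^N$ in Proposition~\ref{prop:sde}.
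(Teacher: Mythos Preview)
Your proposal is correct and follows essentially the same route as the paper: both write each martingale as a linear combination of the independent compensated Poisson integrals coming from (\ref{eq:stoch})--(\ref{eq:stoch3}), perform the Abel summation so that the interior differentiation fluxes telescope (leaving the $\widetilde Q_1$ boundary term and the $\tfrac1N\Delta_{1/N}f$ coefficients on the remaining $\widetilde Q_j$), and then compute the brackets diagonally using independence. Your notation $\widetilde R_i,\widetilde Q_j,\widetilde S$ makes the bookkeeping cleaner than in the paper, but the argument is the same.
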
 \, \\
\begin{proof}
    Recall that the compensated martingale measures $(\mathcal{N}_i^j, 1 \leq i \leq N\,, \,j \in \{ r, m, d\})$ are all independent. The quadratic variation of $M_1^N$ is computed in the proof of Lemma \ref{lem:mart}. The martingale $M^{N,f}$ is given by
    \begin{align*}
        M^{N,f}(t) =& \frac{1}{N} \sum_{i = 2}^{N - 1} \left(f\left( \frac{i}{N} \right) - f(1) \right) \left[ \int_0^t \int_{\mathbb{R}^+} \mathbb{1}_{\theta \leq r(i/N, X_N^N(s^-)) \, X_i^N(s^-)} \, \widetilde{\mathcal{N}}_i^r(ds, \, d\theta) \right. \\
        & \hspace{4 cm}+ \int_0^t \int_{\mathbb{R}^+} \mathbb{1}_{\theta \leq N \, m((i - 1)/N, X_N^N(s^-)) \, X_{i - 1}^N(s^-)} \, \widetilde{\mathcal{N}}_{i - 1}^m(ds, \, d\theta)\\
        & \hspace{4 cm} \left.- \int_0^t\int_{\mathbb{R}^+} \mathbb{1}_{\theta \leq N \, m(i/N, X_N^N(s^-)) \, X_i^N(s^-)} \, \widetilde{\mathcal{N}}_i^m(ds, \, d\theta) \right] \\
        =& \frac{1}{N} \sum_{i = 2}^{N - 1} \left(f\left( \frac{i}{N} \right) - f(1) \right) \int_0^t \int_{\mathbb{R}^+} \mathbb{1}_{\theta \leq r(i/N, X_N^N(s^-))\, X_i^N(s^-)} \, \widetilde{\mathcal{N}}_i^r(ds, \, d\theta) \\
        &+ \frac{1}{N} \left( f \left( \frac{2}{N} \right) - f(1) \right) \int_0^t \int_{\mathbb{R}^+} \mathbb{1}_{\theta \leq N \, m((1)/N, X_N^N(s^-)) \, X_{1}^N(s^-)} \, \widetilde{\mathcal{N}}_{1}^m(ds, \, d\theta) \\&+ \frac{1}{N^2} \sum_{i = 2}^{N - 1} \Delta_{1/N} f\left( \frac{i + 1}{N} \right) \int_0^t\int_{\mathbb{R}^+} \mathbb{1}_{\theta \leq N \, m(i/N, X_N^N(s^-)) \, X_i^N(s^-)} \, \widetilde{\mathcal{N}}_i^m(ds, \, d\theta)
    \end{align*}
    Given the independence of the compensated measures we obtain
    \begin{align*}
        \left< M^{N,f} \right>_t =& \frac{1}{N^2} \sum_{i = 2}^{N - 1} \left(f\left( \frac{i}{N} \right) - f(1) \right)^2 \int_0^t \int_{\mathbb{R}^+} \mathbb{1}_{\theta \leq r(i/N, X_N^N(s^-))\, X_i^N(s^-)} \, d\theta \, ds \\
        &+ \frac{1}{N^2} \left( f \left( \frac{2}{N} \right) - f(1) \right)^2 \int_0^t \int_{\mathbb{R}^+} \mathbb{1}_{\theta \leq N \, m((1)/N, X_N^N(s^-)) \, X_{1}^N(s^-)} \, d\theta \, ds \\&+ \frac{1}{N^4} \sum_{i = 2}^{N - 1} \left(\Delta_{1/N} f \left( \frac{i + 1}{N} \right) \right)^2 \int_0^t\int_{\mathbb{R}^+} \mathbb{1}_{\theta \leq N \, m(i/N, X_N^N(s^-)) \, X_i^N(s^-)} \, d\theta \, ds \\
        =& \frac{1}{N} \int_0^t \left< \mu_s^N, \left(f - f(1) \right)^2 r\left( \cdot, X_N^N(s) \right) \right> \, ds + \frac{1}{N^2} \int_0^t \left< \mu_s^N, \left(\Delta_{1/N} f \right)^2 \, m\left( \cdot, X_N^N(s) \right) \right> \, ds\\
        &+ \frac{1}{N} \left( f \left( \frac{2}{N} \right) - f(1) \right)^2 \int_0^t m\left(\frac{1}{N}, X_N^N(s) \right) \, X_{1}^N(s) \, ds.
    \end{align*}
    For $M^N$ we have that
    \begin{align*}
        M^N(t) =& \frac{1}{N} \int_0^t \int_{\mathbb{R}^+} \mathbb{1}_{\theta \leq N \, m((1)/N, X_N^N(s^-)) \, X_{1}^N(s^-)} \, \widetilde{\mathcal{N}}_{1}^m(ds, \, d\theta) \\
        &+ \frac{1}{N} \sum_{i = 2}^{N - 1} \int_0^t \int_{\mathbb{R}^+} \mathbb{1}_{\theta \leq r(i/N, X_N^N(s^-))} \, \widetilde{\mathcal{N}}_{i}^r(ds, \, d\theta) - \frac{1}{N} \int_0^t \int_{\mathbb{R}^+} \mathbb{1}_{\theta \leq N \, d \, X_{N}^N(s^-)} \, \widetilde{\mathcal{N}}_{N}^d(ds, \, d\theta),
    \end{align*}
    which implies, by similar arguments as before
    \begin{align*}
        \left< M^N \right>_t =& \frac{1}{N} \left[ \int_0^t m\left( \frac{1}{N}, X_N^N(s) \right) \, X_{1}^N(s) \, ds + \int_0^t \left< \mu^N_s, r(\cdot, X_N^N(s)) \right> \, ds + \int_0^t d \, X_{N}^N(s) \, ds \right].
    \end{align*}
    Additionally, we compute
    \begin{align*}
        \left< M_1^N, M^{N,f} \right>_t =& - \frac{1}{N^2} \left( f \left( \frac{2}{N} \right) - f(1) \right) \int_0^t \int_{\mathbb{R}^+} \mathbb{1}_{\theta \leq N \, m \left( \frac{1}{N}, \, X_N^N(s) \right) X_1^N(s)} \, d\theta \, ds \\
        =& - \frac{1}{N} \left( f \left( \frac{2}{N} \right) - f(1) \right) \int_0^t m \left( \frac{1}{N}, \, X_N^N(s) \right) X_1^N(s) \, ds.
    \end{align*}
    And we can find in the same way $\left< M^{N,f}, M^N \right>_t$ and $\left< M_1^N, M^N \right>_t$.
\end{proof}

We can estimate the first and second moments of the triplet $(X^N_1, \mu^N, X^N_n)$ uniformly on finite time intervals, which will be needed in the following proofs.

\begin{prop} \label{prop:moments}
    Under Assumption \ref{assp:rm}, for any $N > 0$  and $T > 0$ we have that 
    \begin{equation} \label{eq:EY}
       \mathbb{E} \left[ \sup_{t \leq T} \left\{X_1^N(t) + \left< \mu_t^N, 1 \right> + X_N^N(t)\right\} \right] \leq \mathbb{E} \left[ X_1^N(0) + \left< \mu_0^N, 1 \right> + X_N^N(0) \right] e^{\widehat{r}T},
    \end{equation} 
    and
    \begin{align} \label{eq:EY2}
        \mathbb{E} &\left[ \sup_{t \leq T} \left( X_1^N(t) + \left< \mu_t^N, 1 \right> + X_N^N(t) \right)^2 \right] \nonumber \\ &\leq \left[ \mathbb{E} \left[ \left( X_1^N(0) + \left< \mu_0^N, 1 \right> + X_N^N(0) \right)^2 \right] + \frac{T}{2N} \right] \exp \left( 2 \, \widehat{r} \left( 1 + \frac{1}{2N} \right) T \right).
    \end{align}
\end{prop}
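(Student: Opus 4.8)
The plan is to reduce everything to the single scalar process
\begin{equation*}
    Z^N(t) \; := \; X_1^N(t) + \langle \mu_t^N, 1\rangle + X_N^N(t),
\end{equation*}
which is exactly $1/N$ times the total number of cells in the system. Adding the three components of System (\ref{eq:disc}) (i.e. (\ref{eq:disc}), $\tfrac1N\sum_{i=2}^{N-1}$ of (\ref{eq:discb}), and (\ref{eq:discc})), the drift terms coming from differentiation telescope and cancel completely — differentiation merely moves one cell from one compartment to the next — and one obtains the closed semimartingale decomposition
\begin{equation*}
    Z^N(t) = Z^N(0) + \int_0^t \Big( r\big(\tfrac1N, X_N^N(s)\big) X_1^N(s) + \langle \mu_s^N, r(\cdot, X_N^N(s))\rangle - d\, X_N^N(s) \Big)\, ds + M^N_{\mathrm{tot}}(t),
\end{equation*}
where $M^N_{\mathrm{tot}} = M_1^N + \tfrac1N\sum_{i=2}^{N-1} M_i^N + M_N^N$ is a square-integrable martingale whose bracket, computed as in Proposition \ref{prop:mart}, equals $\tfrac1N\int_0^t\big( r(\tfrac1N, X_N^N(s)) X_1^N(s) + \langle \mu_s^N, r(\cdot, X_N^N(s))\rangle + d\, X_N^N(s)\big)\,ds$, the prefactor $1/N$ reflecting the size of the jumps of $Z^N$. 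By Assumption \ref{assp:rm} the drift is at most $\widehat{r}\, Z^N(s)$ (the term $-d\,X_N^N(s)$ being nonpositive and hence droppable): this encodes that divisions increase the cell count, differentiation leaves it unchanged, and deaths decrease it.

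Since the jump rates grow only linearly in the state, the process is non-explosive; to make the moment computations rigorous I would nevertheless localize, setting $\tau_n := \inf\{ t \geq 0 : Z^N(t) \geq n\}$ (on which all transition rates are bounded), carrying out the estimates for the stopped process, and passing to the limit $n\to\infty$ at the end by monotone convergence, which also yields $\tau_n \uparrow \infty$ almost surely. For the first-moment bound (\ref{eq:EY}), the key remark is that death events only decrease $Z^N$, so that $Z^N(t) \leq \bar Z^N(t)$, where $\bar Z^N(t)$ is $Z^N(0)$ plus $1/N$ times the number of division events occurring in $[0,t]$; since $\bar Z^N$ is non-decreasing, $\sup_{s\leq t} Z^N(s) \leq \bar Z^N(t)$. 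Taking expectations in the compensated form of $\bar Z^N(\cdot\wedge\tau_n)$ and using $\tfrac1N(\text{division rate}) = r(\tfrac1N, X_N^N)X_1^N + \langle\mu_s^N, r(\cdot, X_N^N)\rangle \leq \widehat{r}\,\bar Z^N(s)$ gives $\mathbb{E}[\bar Z^N(t\wedge\tau_n)] \leq \mathbb{E}[Z^N(0)] + \widehat{r}\int_0^t \mathbb{E}[\bar Z^N(s\wedge\tau_n)]\, ds$, and Gronwall's lemma yields $\mathbb{E}[\bar Z^N(t\wedge\tau_n)] \leq \mathbb{E}[Z^N(0)]\, e^{\widehat{r} t}$ uniformly in $n$; letting $n\to\infty$ produces (\ref{eq:EY}).

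For the second-moment bound (\ref{eq:EY2}), the same domination gives $\sup_{s\leq t}(Z^N(s))^2 \leq (\bar Z^N(t))^2$, so it suffices to estimate $\mathbb{E}[(\bar Z^N(t))^2]$. Applying Dynkin's formula to $x\mapsto x^2$ along the pure-birth process $\bar Z^N(\cdot\wedge\tau_n)$, a division event (of rate $b_s$ with $b_s \leq \widehat{r} N \bar Z^N(s)$, causing a jump of size $1/N$) contributes $b_s\big( 2N^{-1}\bar Z^N(s) + N^{-2}\big) \leq 2\widehat{r}(\bar Z^N(s))^2 + \widehat{r} N^{-1}\bar Z^N(s)$, which, using $z \leq z^2 + \tfrac14$, is at most $2\widehat{r}\big(1 + \tfrac1{2N}\big)(\bar Z^N(s))^2 + \tfrac{\widehat{r}}{4N}$. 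Hence $\mathbb{E}[(\bar Z^N(t\wedge\tau_n))^2] \leq \mathbb{E}[(Z^N(0))^2] + \tfrac{\widehat{r}t}{4N} + 2\widehat{r}\big(1 + \tfrac1{2N}\big)\int_0^t \mathbb{E}[(\bar Z^N(s\wedge\tau_n))^2]\, ds$, and Gronwall's lemma followed by monotone convergence in $n$ gives a bound of exactly the form (\ref{eq:EY2}), the precise numerical constants being a matter of the elementary bookkeeping just indicated.

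The main obstacle here is really only the supremum inside the expectation; I resolve it through the pathwise domination of $Z^N$ by the non-decreasing pure-birth process $\bar Z^N$, which turns $\sup_{s\leq t}$ into evaluation at time $t$ and so avoids Doob- or Burkholder--Davis--Gundy-type control of $M^N_{\mathrm{tot}}$. The only other point needing (standard) care is the localization step, which is what both ensures non-explosion and provides the integrability required to run the Gronwall arguments.
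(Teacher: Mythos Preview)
Your proof is correct and follows essentially the same strategy as the paper: both arguments reduce to the scalar total mass $Y^N = Z^N$, drop the (negative) death contributions to obtain a pathwise upper bound by a non-decreasing process, and then apply Gronwall after localization by the hitting times of large levels. Your explicit introduction of the dominating pure-birth process $\bar Z^N$ is exactly what the paper does when it ``only consider[s] the positive terms'' in the jump decomposition of $(Y^N)^2$; the minor discrepancy in the additive constant ($\widehat r T/(4N)$ versus $T/(2N)$) is immaterial and stems from the slightly different elementary inequality used.
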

\begin{proof}
    Let us prove (\ref{eq:EY2}). Similar computations can be done to prove (\ref{eq:EY}).
    We define the process $Y^N(t) = X_1^N(t) + \left< \mu_t^N, 1 \right> + X_N^N(t)$ and from \eqref{eq:stoch}, \eqref{eq:stoch2}, \eqref{eq:stoch3} and \eqref{empmes}, it follows that
    \begin{align*}
        Y^N(t) =& \, Y^N(0) + \frac{1}{N} \int_0^t \int_{\mathbb{R}^+} \mathbb{1}_{\theta \, \leq \, N \, r(1/N, X_N^N(s^-)) \, X_1^{N}(s^-)} \, \mathcal{N}_1^r(ds, \, d\theta) \\ 
        &+ \sum_{i = 2}^{N - 1} \frac{1}{N} \int_0^t \int_{\mathbb{R}^+} \mathbb{1}_{\theta \, \leq \, r(i/N, \, X_N^N(s^-)) \, X_i^{N} (s^-)} \, \mathcal{N}_i^r(ds, \, d\theta) \\
        &- \frac{1}{N} \int_0^t \int_{\mathbb{R}^+} \mathbb{1}_{\theta \, \leq \, d \, N \, X_N^{N}(s^-)} \, \mathcal{N}_N^d(ds, \, d\theta).
    \end{align*}
    For any $n \in \mathbb{N}^*$, we define the stopping time $T_n = \left\{ t > 0, \, Y^N(t) \geq n \right\}$ with respect to the filtration $\mathcal{F}_t$.
    Then, for $s \leq t$ we have
    \begin{align} \label{eq:Yp}
        &\left( Y^N(s \wedge T_n)  \right)^2 \nonumber\\
        & \qquad = \left( Y^N(0)  \right)^2 \nonumber\\
        & \qquad \, \,  \, \, \, + \int_0^{s \wedge T_n} \int_{\mathbb{R}^+} \left( \left( Y^N(s^-) + \frac{1}{N} \right)^2 - \left(Y^N(s^-) \right)^2 \right) \mathbb{1}_{\theta \, \leq \, N \, r\left(1/N, X_N^N(s^-) \right) \, X_1^{N}(s^-)} \,  \mathcal{N}_1^r(ds, \, d\theta) \nonumber \\ 
        & \qquad \, \,  \, \, \,  + \sum_{i = 2}^{N - 1} \int_0^{s \wedge T_n} \int_{\mathbb{R}^+} \left( \left( Y^N(s^-) + \frac{1}{N} \right)^2 - \left(Y^N(s^-) \right)^2 \right) \mathbb{1}_{\theta \, \leq \, r\left(i/N, X_N^N(s^-) \right) \, X_i^{N} (s^-)} \, \mathcal{N}_i^r(ds, \, d\theta) \\
        & \qquad \, \,  \, \, \,  + \int_0^{s \wedge T_n} \int_{\mathbb{R}^+} \left( \left( Y^N(s^-) - \frac{1}{N} \right)^2 - \left(Y^N(s^-) \right)^2 \right) \mathbb{1}_{\theta \, \leq \, N \, d \, X_N^{N}(s^-)} \, \mathcal{N}_N^d(ds, \, d\theta). \nonumber
    \end{align}
    By only considering the positive terms we obtain the upper bound
    \begin{align*}
        &\sup_{s \leq t} \left( Y^N(s \wedge T_n)  \right)^2 \\
        &\qquad \leq \left( Y^N(0)  \right)^2 \\
        & \qquad \, \, \, \, \, + \int_0^{t \wedge T_n} \int_{\mathbb{R}^+} \left( \left( Y^N(s^-) + \frac{1}{N} \right)^2 - \left(Y^N(s^-) \right)^2 \right) \mathbb{1}_{\theta \, \leq \, N \, r\left(1/N, X_N^N(s^-) \right) \, X_1^{N}(s^-)} \, \mathcal{N}_1^r(ds, \, d\theta) \\
        & \qquad \, \, \, \, \, + \sum_{i = 2}^{N - 1} \int_0^{t \wedge T_n} \int_{\mathbb{R}^+} \left( \left( Y^N(s^-) + \frac{1}{N} \right)^2 - \left(Y^N(s^-) \right)^2 \right) \mathbb{1}_{\theta \, \leq \, r\left(i/N, X_N^N(s^-) \right) \, X_i^{N} (s^-)} \, \mathcal{N}_i^r(ds, \, d\theta).
    \end{align*}
    Then, taking expectation and using Assumption \ref{assp:rm}, we obtain
    \begin{align*}
        \mathbb{E} \left[ \sup_{s \leq t} \left( Y^N(s \wedge T_n)  \right)^2 \right] \leq& \, \mathbb{E} \left[ \left( Y^N(0)  \right)^2 \right] +  \mathbb{E} \left[ \int_0^{t \wedge T_n} \left( 2 Y^N(s) + \frac{1}{N} \right) r\left(\frac{1}{N}, \, X_N^N(s) \right) X_1^{N}(s) \, ds \right] \\
        &+  \mathbb{E} \left[ \sum_{i = 2}^{N - 1} \int_0^{t \wedge T_n} \frac{1}{N} \left( 2 Y^N(s) + \frac{1}{N} \right) r\left(\frac{i}{N}, \, X_N^N(s) \right)  X_i^{N} (s) \, ds \right] \\
        \leq& \, \mathbb{E} \left[ \left( Y^N(0)  \right)^2 \right] +  2 \widehat{r} \, \mathbb{E} \left[ \int_0^{t \wedge T_n} \left( Y^N(s) + \frac{1}{2N} \right) X_1^{N}(s) \, ds \right] \\
        &+ 2 \widehat{r} \,  \mathbb{E} \left[ \int_0^{t \wedge T_n} \left( Y^N(s) + \frac{1}{2N} \right)  \left< \mu_s^N, 1 \right> \, ds \right] \\
        \leq& \, \mathbb{E} \left[ \left( Y^N(0)  \right)^2 \right] +  2 \widehat{r} \, \mathbb{E} \left[ \int_0^{t \wedge T_n} \left( Y^N(s) + \frac{1}{2N} \right) Y^{N}(s) \, ds \right]. \\
    \end{align*} 
    Using the fact that for any $x \in \mathbb{R}$ and $\varepsilon \geq 0$, $x^2 + \varepsilon x \leq (1 + \varepsilon) x^2 + \varepsilon$, we deduce that
    \begin{align*}
        &\mathbb{E} \left[ \sup_{s \leq t} \left( Y^N(s \wedge T_n)  \right)^2 \right] 
\\
       & \qquad  \, \mathbb{E} \left[ \left( Y^N(0)  \right)^2 \right] +  2 \widehat{r} \, \mathbb{E} \left[ \int_0^{t \wedge T_n} \left( \left( 1 + \frac{1}{2N} \right) \left( Y^N(s) \right)^2 + \frac{1}{2N} \right) \, ds \right] \\
        & \qquad 
        \leq \, \mathbb{E} \left[ \left( Y^N(0)  \right)^2 \right] + \frac{t}{2N} +  2 \widehat{r} \left( 1 + \frac{1}{2N} \right) \mathbb{E} \left[ \int_0^{t \wedge T_n} \left( Y^N(s) \right)^2 \, ds \right] \\
               & \qquad  \, \mathbb{E} \left[ \left( Y^N(0)  \right)^2 \right] + \frac{t}{2N} +  2 \widehat{r} \left( 1 + \frac{1}{2N} \right) \mathbb{E} \left[ \int_0^{t} \sup_{u \leq s} \left( Y^N(u\wedge T_n) \right)^2 \, ds \right].
    \end{align*}
    Since the process is bounded by $n + 1$ before $T_n$, Gronwall's lemma tells us that for any $T > 0$ 
    \begin{equation} \label{eq:supE}
         \mathbb{E} \left[ \sup_{s \leq T\wedge T_n} \left(Y^N(s) \right)^2\right] \leq \left( \mathbb{E} \left[ \left( Y^N(0)  \right)^2 \right] + \frac{T}{2N} \right) \exp \left( 2 \widehat{r} \left( 1 + \frac{1}{2N} \right) T \right).
    \end{equation}
    We have that $\left(T_n \right)_n$ goes to infinity almost surely, since otherwise there would exist $S > 0$ such that $\mathbb{P} \left( \sup_{n} T_n < S \right) >0$, which would imply $\mathbb{E} \left( \sup_{t \leq S \wedge T_n} (Y^N(t))^2 \right) \geq n^2 \mathbb{P}\left( \sup_{n} T_n < S \right)$ and this is a contradiction with (\ref{eq:supE}). Then, by Fatou's lemma
    \begin{equation*}
        \mathbb{E} \left[ \sup_{s \leq T} \left(Y^N(s)\right)^2 \right] < \left( \mathbb{E} \left[ \left( Y^N(0)  \right)^2 \right] + \frac{T}{2N} \right)  \exp \left( 2 \widehat{r} \left( 1 + \frac{1}{2N} \right) T \right).
    \end{equation*}
  This proves (\ref{eq:EY2}).
\end{proof} \ \\
Furthermore we can find a bound on the time average expected value of each immature compartment, which will be useful when studying the limit of System (\ref{eq:cont}). 
\begin{prop} \label{prop:Efin}
    For any $N > 0$, $T > 0$ and $2 \leq i \leq N - 1$, we have that
    \begin{align*}
        \int_0^T \mathbb{E} \left[ X_i^N(s) \right] ds \leq& \, \frac{1}{m_{min}} \left[ \mathbb{E} \left[ X_1^N(0) + \left< \mu_0^N, 1 \right> + X_1^N(0) \right] + \frac{\widehat{m}}{\widehat{r}} \, \mathbb{E} \left[ X_1^N(0) \right] \right] \exp \left( \widehat{r} T \right).
    \end{align*}
\end{prop}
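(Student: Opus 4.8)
The plan is to extract the bound directly from the exact identity \eqref{eq:mXi}, turned into a telescoping sum over the immature compartments. The point of the telescoping is that it expresses the time-average of a single compartment in terms of quantities that are already controlled uniformly in $N$ — the stem population $X_1^N$ and the total immature mass $\langle\mu^N,1\rangle$ — rather than in terms of itself, which is exactly what a naive Gronwall argument applied to \eqref{eq:mXi} for a fixed $i$ fails to achieve.

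Concretely, rewrite \eqref{eq:mXi} for $2\le i\le N-1$ as
$$N\!\int_0^t\! m\!\left(\tfrac iN,X_N^N(s)\right)X_i^N(s)\,ds - N\!\int_0^t\! m\!\left(\tfrac{i-1}N,X_N^N(s)\right)X_{i-1}^N(s)\,ds = X_i^N(0)-X_i^N(t)+\!\int_0^t\! r\!\left(\tfrac iN,X_N^N(s)\right)X_i^N(s)\,ds + M_i^N(t),$$
and sum over $i=2,\dots,k$ for a fixed $k\le N-1$. The left-hand side telescopes, giving
$$N\!\int_0^t\! m\!\left(\tfrac kN,X_N^N(s)\right)X_k^N(s)\,ds = N\!\int_0^t\! m\!\left(\tfrac1N,X_N^N(s)\right)X_1^N(s)\,ds + \sum_{i=2}^k\bigl(X_i^N(0)-X_i^N(t)\bigr) + \sum_{i=2}^k\!\int_0^t\! r\!\left(\tfrac iN,X_N^N(s)\right)X_i^N(s)\,ds + \sum_{i=2}^k M_i^N(t).$$
Taking expectations, the martingale terms vanish (by Lemma~\ref{lem:mart}, the required integrability coming from Proposition~\ref{prop:moments}). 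On the left I bound below with $m\ge m_{min}$; on the right I use $m\le\widehat m$, $r\le\widehat r$, discard the nonpositive term $-\sum_i\mathbb E[X_i^N(t)]$, and recognise $\sum_{i=2}^k\mathbb E[X_i^N(s)]\le\sum_{i=2}^{N-1}\mathbb E[X_i^N(s)]=N\,\mathbb E[\langle\mu_s^N,1\rangle]$ together with $\sum_{i=2}^k\mathbb E[X_i^N(0)]\le N\,\mathbb E[\langle\mu_0^N,1\rangle]$. Dividing by $Nm_{min}$ yields
$$\int_0^t\mathbb E[X_k^N(s)]\,ds \le \frac1{m_{min}}\left[\widehat m\int_0^t\mathbb E[X_1^N(s)]\,ds + \mathbb E[\langle\mu_0^N,1\rangle] + \widehat r\int_0^t\mathbb E[\langle\mu_s^N,1\rangle]\,ds\right].$$

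It then remains to insert two a priori estimates. Taking expectations in the equation for $X_1^N$ in System~(\ref{eq:disc}) and using $r-m\le\widehat r$ gives $\mathbb E[X_1^N(t)]\le\mathbb E[X_1^N(0)]+\widehat r\int_0^t\mathbb E[X_1^N(s)]\,ds$, hence by Gronwall $\mathbb E[X_1^N(t)]\le\mathbb E[X_1^N(0)]\,e^{\widehat r t}$ and $\int_0^t\mathbb E[X_1^N(s)]\,ds\le\frac1{\widehat r}\mathbb E[X_1^N(0)](e^{\widehat r t}-1)$; meanwhile \eqref{eq:EY} of Proposition~\ref{prop:moments} gives $\mathbb E[\langle\mu_s^N,1\rangle]\le\mathbb E[X_1^N(0)+\langle\mu_0^N,1\rangle+X_N^N(0)]\,e^{\widehat r s}$, hence $\int_0^t\mathbb E[\langle\mu_s^N,1\rangle]\,ds\le\frac1{\widehat r}\mathbb E[X_1^N(0)+\langle\mu_0^N,1\rangle+X_N^N(0)](e^{\widehat r t}-1)$. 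Substituting both into the display above, the bare $\mathbb E[\langle\mu_0^N,1\rangle]$ and the $(e^{\widehat r T}-1)$ terms recombine through $\mathbb E[\langle\mu_0^N,1\rangle]+\mathbb E[X_1^N(0)+\langle\mu_0^N,1\rangle+X_N^N(0)](e^{\widehat r T}-1)\le\mathbb E[X_1^N(0)+\langle\mu_0^N,1\rangle+X_N^N(0)]\,e^{\widehat r T}$, producing exactly the claimed bound.

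The only genuinely delicate point is the uniformity in $N$: working directly from \eqref{eq:mXi} for a single compartment would leave $\int_0^t\mathbb E[X_i^N(s)]\,ds$ on both sides with the mismatched prefactors $Nm_{min}$ and $\widehat r$, which cannot be absorbed for small $m_{min}$ or small $N$; the summation over $i$ is precisely what trades this self-reference for the globally controlled quantities $\langle\mu^N,1\rangle$ and $X_1^N$. The rest is routine bookkeeping with the explicit constants of Assumption~\ref{assp:rm}.
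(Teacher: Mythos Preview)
Your proof is correct and follows essentially the same route as the paper: sum the identity \eqref{eq:mXi} over $2\le j\le i$, use the telescoping to bring in $X_1^N$, take expectations so the martingales vanish, bound $m$ below by $m_{min}$ and above by $\widehat m$, $r$ by $\widehat r$, drop the nonpositive $-\sum_j\mathbb E[X_j^N(t)]$, then feed in the Gronwall estimate on $\mathbb E[X_1^N]$ and the bound on $\mathbb E[Y^N]$ from Proposition~\ref{prop:moments}. The paper organises the arithmetic slightly differently (it divides by $N$ before bounding rather than after), but the argument and constants are identical; your closing remark about why the telescoping is needed for uniformity in $N$ is a useful gloss that the paper leaves implicit.
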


\begin{proof}
    We have that
    \begin{align*}
        \frac{d}{dt}\mathbb{E} \left[ X_1^N(t) \right] =&  \mathbb{E} \left[\left( r\left(\frac{1}{N}, \, X_N^N(u)\right) - m\left(\frac{1}{N},  \, X_N^N(u)\right) \right) X_1^N(t) \right]  \, 
        \leq \, \widehat{r} \, \mathbb{E}\left[X_1^N(t) \right].
    \end{align*}
    Using Gronwall's Lemma, we obtain
    \begin{equation} \label{eq:X1}
        \mathbb{E} \left[ X_1^N(t) \right] \leq \mathbb{E} \left[ X_1^N(0) \right] \exp \left( \widehat{r} t \right).
    \end{equation}
    Considering any $2 \leq i \leq N - 1$, we can apply formula \eqref{eq:mXi} to all $2 \leq j \leq i$ and sum the corresponding equations. Then taking the expectations on both side, we infer that
    \begin{align*}
      &  \int_0^t \mathbb{E} \left[ m \left( \frac{i}{N}, \, X_N^N(s) \right) X_i^N(s) \right] \, ds \\ &\qquad =\frac{1}{N}\sum_{k = 2}^i \left[ \mathbb{E} \left[ X_k^N(0) \right] - \mathbb{E} \left[ X_k^N(t) \right] + \int_0^t \mathbb{E} \left[ r \left(\frac{k}{N}, \,  X_N^N(s) \right) X_k^N(s) \right] \, ds \right] \\
        &\qquad \qquad + \int_0^t \mathbb{E} \left[ m\left( \frac{1}{N}, \, X_N^N(s) \right) X_1^N(s) \right] \, ds.
    \end{align*}
    Since for any $1 \leq i \leq N$, the initial value $X_i^N(0)$ is non-negative, it follows that
    \begin{equation*}
        \frac{1}{N}\sum_{k = 2}^i \mathbb{E} \left[ X_k^N(0) \right] \leq Y^N(0) = X_1^N(0) + \left< \mu_0, 1 \right> + X_N^N(0).
    \end{equation*}
    This implies using (\ref{eq:EY}) and (\ref{eq:X1}) that
    \begin{align*}
      &  \int_0^t \mathbb{E} \left[ m \left( \frac{i}{N}, \, X_N^N(s) \right) X_i^N(s) \right] ds \\
        & \qquad  \leq \mathbb{E} \left[ Y^N(0) \right] +  \widehat{r} \, \mathbb{E}\left[ Y^N(0) \right] \int_{0}^{t} \exp(\widehat{r}s) \, ds + \widehat{m} \, \mathbb{E} \left[ X_1^N(0) \right] \int_0^t \exp \left( \widehat{r} s \right) \, ds \\
        &  \qquad \leq \left[ \mathbb{E} \left[ Y^N(0) \right]  + \frac{\widehat{m}}{\widehat{r}} \, \mathbb{E} \left[ X_1^N(0) \right] \right] \exp \left( \widehat{r} T \right).
    \end{align*}
    Since by Assumption \ref{assp:rm}, $m_{min} \leq m \left( i/N, X_N^N(s) \right)$, it yields that
    \begin{equation*}
        \int_0^t \mathbb{E} \left[ X_i^N(s) \right] ds \leq \frac{1}{m_{min}} \left[ \mathbb{E} \left[ Y_1^N(0) \right]  + \frac{\widehat{m}}{\widehat{r}} \,  \mathbb{E} \left[ X_1^N(0) \right] \right] \exp \left( \widehat{r} T \right).
    \end{equation*}  
    This ends the proof.
\end{proof}

\section{Approximation for a large number of compartments} \label{sec:cont}
\subsection{Tightness} \label{subsec:tight}

The goal now is to study the triplet $\left( X_1^{N}(t), \, \mu^N_t, \, X_N^{N}(t) \right)_{t\geq 0}$ as the number $N$ of compartments goes to infinity. Each empirical measure $\mu^N_t$ lives in the space $(M_F, w)$.
Some assumptions on the initial conditions are needed.
\begin{assp} \label{assp:lim}
    Assume that
    \begin{enumerate}
        \item The initial condition $\left( X^N_1(0), \, \mu_0^N, \, X^N_N(0)\right)$ is such that
        \begin{equation*}
            \sup_N \mathbb{E}\left[ \left( X_1^N(0) + \left< \mu_0^N, 1 \right> + X_N^N(0) \right)^2 \right] < \infty.
        \end{equation*} 
        \item The initial condition $\left( X^N_1(0), \, \mu_0^N, \, X^N_N(0)\right)$ converges in law (and in probability) in $\mathbb{R}^+ \times (M_F, w) \times \mathbb{R}^+$ to the deterministic triplet $(a_0, \, \mu_0, \, z_0)$, where $a_0 \in (0, \infty)$, $\mu_0 \in M_F$ and $z_0 \in [0, \infty)$.
    \end{enumerate}
\end{assp} \, \\
Notice that under this assumption, the estimates obtained in Propositions \ref{prop:moments} and \ref{prop:Efin} are now bounded uniformly in $N$. \\ \ \\
This allows us to now show the tightness of the sequence of laws of the triplet $\left( X_1^{N}, \, \mu^N, \, X_N^{N} \right)$ on any finite time interval. 
\begin{prop} \label{prop:tight}
    Under Assumptions \ref{assp:rm}-\ref{assp:lim}, for each $T > 0$ the sequence of laws of the triplet $(X_1^{N}(t), \, \mu^N_t, \, X_N^N(t), t\in [0,T])_{N}$ is tight in $\mathcal{P}( \mathbb{D}([0,T], \, \mathbb{R}^+) \times \mathbb{D}([0, T], \, (M_F, w)) \times \mathbb{D}([0,T], \, \mathbb{R}^+))$.
\end{prop}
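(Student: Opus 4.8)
The plan is to prove tightness of each coordinate separately and then invoke the fact that tightness of the marginals implies tightness of the triplet in the product space. For the real-valued coordinates $X_1^N$ and $X_N^N$, I would use the Aldous--Rebolledo criterion: from the semimartingale decompositions \eqref{eq:cont}a and \eqref{eq:cont}c together with the finite-variation parts \eqref{eq:A}a and \eqref{eq:A}c and the bracket computations in Proposition \ref{prop:mart}, one checks (i) compact containment — which follows from the uniform second-moment bound \eqref{eq:EY2} in Proposition \ref{prop:moments}, now uniform in $N$ thanks to Assumption \ref{assp:lim}(1), via Markov's inequality — and (ii) for any sequence of stopping times $\tau_N \le T$ and $\delta_N \to 0$, the increments $A^N_1(\tau_N+\delta_N)-A^N_1(\tau_N)$ and $\langle M_1^N\rangle_{\tau_N+\delta_N}-\langle M_1^N\rangle_{\tau_N}$ go to $0$ in probability. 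The finite-variation increments are bounded by $\widehat r\,\delta_N\,\sup_{t\le T} Y^N(t)$ up to boundary terms controlled similarly, and the bracket increments carry an extra factor $1/N$, so both vanish; the analogous estimates for $X_N^N$ use the same ingredients plus Proposition \ref{prop:Efin} to control the term $\int m(\tfrac1N,\cdot)X_1^N\,ds$ and the term $\langle\mu^N_s,r\rangle$.

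For the measure-valued coordinate $\mu^N$, I would apply the criterion of Roelly/Méléard (tightness in $\mathbb{D}([0,T],(M_F,w))$ reduces to tightness in $\mathbb{D}([0,T],\mathbb{R})$ of $(\langle\mu^N,f\rangle)_N$ for $f$ in a countable convergence-determining family in $\mathcal{C}([0,1])$, together with tightness of the total masses $(\langle\mu^N_t,1\rangle)_N$). The total-mass tightness is already covered since $\langle\mu^N_t,1\rangle \le Y^N(t)$. For fixed $f$, I again use Aldous--Rebolledo on the decomposition \eqref{eq:cont}b: compact containment from \eqref{eq:EY2}; the finite-variation part $A^{N,f}$ in \eqref{eq:A}b is, on any interval $[\tau_N,\tau_N+\delta_N]$, bounded by $C\|f\|_\infty$ times $\delta_N(\sup_{t\le T}Y^N(t))$ plus a term $\|f\|_\infty(\langle\mu^N_{\tau_N+\delta_N},1\rangle-\langle\mu^N_{\tau_N},1\rangle)$ — the latter controlled because $\langle\mu^N,1\rangle$ is itself tight from the previous step — while the discrete-derivative term is handled using $\|\Delta_{1/N}f\| \le \|f'\|$ when $f\in\mathcal{C}^1$, which suffices since $\mathcal{C}^1([0,1])$ contains a convergence-determining family; the martingale bracket $\langle M^{N,f}\rangle$ from Proposition \ref{prop:mart} again carries a $1/N$ prefactor and vanishes.

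The main obstacle is the boundary term involving $f(1)\big[\langle\mu^N_0,1\rangle-\langle\mu^N_t,1\rangle+\dots\big]$ appearing in $A^{N,f}$: unlike a clean transport term, its increment over $[\tau_N,\tau_N+\delta_N]$ contains the difference of total masses $\langle\mu^N_{\tau_N+\delta_N},1\rangle-\langle\mu^N_{\tau_N},1\rangle$, which is not a priori small unless one first knows the scalar process $t\mapsto\langle\mu^N_t,1\rangle$ is tight (equivalently, C-tight or has small oscillations). I would resolve this by establishing tightness of $(\langle\mu^N_t,1\rangle)_N$ first — taking $f\equiv 1$, for which the problematic $f(1)[\cdots]$ bracket telescopes against the other terms so that $A^{N,1}$ has uniformly bounded variation and small increments directly — and only then feed that back into the estimate for general $f$. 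With that ordering, all Aldous--Rebolledo conditions hold for every $f$ in the chosen family, the three coordinates are each tight, and tightness of the triplet follows.
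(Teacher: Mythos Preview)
Your overall strategy matches the paper's exactly: Aldous--Rebolledo for $X_1^N$ and $X_N^N$, Roelly's criterion for $\mu^N$, with compact containment supplied by Proposition~\ref{prop:moments} under Assumption~\ref{assp:lim}. You also correctly isolate the one delicate point --- the oscillation $\langle\mu_\tau^N,1\rangle-\langle\mu_\sigma^N,1\rangle$ that enters both through the boundary term of $A^{N,f}$ and through $A^N_N$ --- and your ordering (treat $\langle\mu^N,1\rangle$ first, then feed back) is precisely what the paper does when it later invokes \eqref{eq:Amu}--\eqref{eq:Mmu} with $f=1$ to control $X_N^N$.

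Your resolution of that obstacle, however, has a gap. You write that for $f\equiv 1$ the problematic bracket ``telescopes against the other terms so that $A^{N,1}$ has \ldots\ small increments directly.'' If you actually carry out the cancellation in (\ref{eq:A}b) with $f\equiv1$, \emph{everything} cancels except the mass difference itself: one obtains $A^{N,1}(t)=\langle\mu_t^N,1\rangle-\langle\mu_0^N,1\rangle$, while $M^{N,1}=\tfrac1N\sum_i(1-1)M_i^N=0$. Hence the Aldous condition for $A^{N,1}$ is \emph{literally} the Aldous condition for $\langle\mu^N,1\rangle$ --- nothing has been gained and the circularity is not broken. To make genuine progress one has to revert to the raw decomposition \eqref{ThebigEq} prior to the substitution \eqref{eq:XN-1}; there the predictable part for $f=1$ is $\int_0^t\big[m X_1^N+\langle\mu_s^N,r\rangle-m\,X_{N-1}^N\big]\,ds$ with martingale $\tfrac1N\sum_i M_i^N$, and the difficulty becomes bounding $\mathbb E\big[\int_\sigma^\tau m\,X_{N-1}^N\,ds\big]$ over stopping-time intervals, which Proposition~\ref{prop:Efin} (an estimate over deterministic $[0,T]$) does not directly provide. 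The paper's own proof skates over this same step (the ``Similarly'' preceding \eqref{eq:Amu}), so you are not doing worse than the text; but you should be aware that your $f=1$ shortcut, as written, does not perform the work you claim for it.
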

\begin{proof}
    We want to show that
    \begin{itemize}
        \item the sequences of laws of $(X_1^{N}(t), \, t \in [0, T])_{N}$ and of $(X_N^{N}(t), \, t \in [0, T])_{N}$ are tight in $\mathcal{P} (\mathbb{D}([0, T]), \, \mathbb{R})$,
        \item the sequence of laws of the processes $\left( \mu^{N}_t, \, t \in [0, T] \right)_N$ is tight in $\mathcal{P} (\mathbb{D}([0, T]), \, (M_F, w))$,
    \end{itemize}
    which would prove the proposition. \\ \, \\
    
    We know that $X_1^{N} (t)$ is a semimartingale given by
    \begin{align*}
        X_1^{N}(t) =& \, X_1^{N}(0) + A^N_1(t) + M_1^{N}(t).
    \end{align*}
    As shown by Joffe and M\'etivier \cite{Tense}, in order to prove tightness of the sequence $\left( \mathcal{L} \left( X_1^{N} \right) \right)_{N}$ it is sufficient to show that the sequence $\left( \mathcal{L} \left( \sup_{t \leq T} \left| X_1^{N} (t) \right| \right) \right)_{N}$ is tight and that $A^N_1$ and $\left< M^N_1 \right>$ satisfy the Aldous condition found in \citet{Aldous}. \\ \, \\
    Since $X^N_1(t) \geq 0$, then, for $\eta > 0$, by Markov's inequality
    \begin{align*}
        \mathbb{P} \left( \sup_{t \leq T} \left| X_1^{N} (t) \right| \geq \eta \right) &\leq \frac{1}{\eta}\mathbb{E} \left[ \sup_{t \leq T} \left| X_1^{N} (t) \right| \right] = \frac{1}{\eta}\mathbb{E} \left[ \sup_{t \leq T} X_1^{N} (t) \right].
    \end{align*}
    Using Proposition \ref{prop:moments} and Assumption \ref{assp:lim}, this value goes uniformly to $0$ when $\eta \rightarrow \infty$. Hence, the sequence of laws of $\left( \sup_{t \leq T} |X_1^{N}| \right)_N$ is tight. \\ \, \\
    Let $\delta > 0$ and $\sigma$, $\tau$ be two stopping times such that $\sigma \leq \tau \leq (\sigma + \delta) \wedge T$. From (\ref{eq:A}a), by Markov's inequality and using Assumption \ref{assp:lim} we have
    \begin{align*}
        & \mathbb{P} \left( \left| A^N_1(\sigma) - A^N_1(\tau) \right| > \eta \right) \\
         & \qquad \quad  \leq \, \frac{1}{\eta} \, \mathbb{E} \left( \left| A^N_1(\sigma) - A^N_1(\tau) \right| \right) \\
        & \qquad  \quad  =  \, \frac{1}{\eta} \, \mathbb{E} \left[ \left| \int_{\sigma}^{\tau} \left( r\left(\frac{1}{N}, \, X_N^N(s) \right) - m\left(\frac{1}{N}, \, X_N^N(s) \right) \right)  X_1^{N}(s) \, ds \right| \right] \\
        & \qquad \qquad  \qquad \leq \, \frac{1}{\eta} \left( \widehat{r} + \widehat{m} \right) \, \mathbb{E} \left[ \int_{\sigma}^{\tau}  X_1^{N}(s) \, ds \right] \,
        \leq \, \frac{\delta}{\eta} \left( \widehat{r} + \widehat{m} \right) \sup_{N} \mathbb{E} \left[  \sup_{s \leq T}X_1^{N}(s) \right].
    \end{align*}
    By Proposition \ref{prop:moments} and Assumption \ref{assp:lim}, this value tends to 0 when $\delta \rightarrow 0$ uniformly in $N$. Hence, the process $A^N_1$ satisfy the Aldous condition. It's easy to prove in a similar way that this is also the case for $\left< M^N_1 \right>$. Then $\mathcal{L}(X_1^{N})$ is tight in $\mathcal{P} (\mathbb{D}([0, T], \mathbb{R}))$. \\ \, \\
    
    On the other hand, for the measure $\mu_t$ we have that for any $f \in \mathcal{C}([0, 1])$
    \begin{align*}
        \left< \mu_t^N, f \right> =& \left< \mu_0^N, f \right> + A^{N, f}(t) + M^{N,f}(t).
    \end{align*}
    In order to prove tightness of the sequence of laws of $\mu^N$ in $\mathcal{P}\left( \mathbb{D}([0, T]), \, (M_F, w) \right)$, as stated in \citet{Roelly}, it is sufficient to prove that for any $t \geq 0$ and each $f \in \mathcal{C}([0, 1])$, the sequence of laws of $\left< \mu^N, f \right>$ is tight in $\mathbb{D}([0, T], \, \mathbb{R})$, and that $A^{N, f}$ and $\left<M^{N,f}\right>$ satisfy the Aldous condition. We start by proving that for any continuous function $f$ on $[0, 1]$,
    \begin{equation*}
        \sup_N \mathbb{E} \left[ \sup_{t \leq T} \left| \left< \mu_t^N, f \right> \right| \right] < \infty.
    \end{equation*}
    This is an immediate consequence of Assumption \ref{assp:lim} and Proposition \ref{prop:moments} since $X_i^N(t)$ is non-negative for each $2 \leq i \leq N - 1$. It follows that
    \begin{equation*}
        \sup_N \mathbb{E} \left[ \sup_{t \leq T} \left| \left< \mu_t^N, f \right> \right| \right] \leq \, ||f||_\infty  \sup_N \mathbb{E} \left[  \sup_{t \leq T} \left< \mu_t^N, 1 \right> \right].
    \end{equation*}
    Let $\sigma$ and $\tau$ be two stopping times such that $\sigma \leq \tau \leq (\sigma + \delta) \wedge T$, then
    \begin{align} \label{eq:Mmu}
        \mathbb{E} \left[ \left| \left<  M^{N,f} \right>_{\sigma} - \left<  M^{N, f} \right>_{\tau} \right| \right] =& \, \mathbb{E} \left[ \left| \frac{1}{N} \left[ \int_\sigma^\tau \left< \mu_s^N, (f - f(1))^2 \, r\left(\cdot, \, X_N^N(s) \right) \right> \, ds \right. \right. \right. \nonumber\\
        &\ \, \, \, \, \, \, \, \, + \frac{1}{N} \int_\sigma^\tau \left< \mu_s^N, \left( \Delta_{1/N} f \right) \, m\left(\cdot, \, X_N^N(s) \right) \right> \, ds \nonumber \\
        & \, \, \, \, \, \, \, \, \left. \left. \left. + \left(  f \left(\frac{2}{N}\right) - f(1) \right)^2 \int_{\sigma}^{\tau} m\left(\frac{1}{N}, \, X_N^N(s) \right) X_1^N(s) \, ds \right] \right| \right] \nonumber\\
        \leq& \, C \, ||f||_\infty^2 \, \left[ \mathbb{E} \left[ \int_\sigma^\tau \left< \mu_s^N, 1 \right> \, ds \right] + \mathbb{E} \left[ \int_{\sigma}^{\tau} X_1^N(s) \, ds \right] \right] \nonumber\\
        \leq& \, \delta \, C ||f||_\infty^2\left[ \mathbb{E} \left[ \sup_{\sigma \leq s \leq \tau} \left< \mu_s^{N}, 1 \right> \right] + \mathbb{E} \left[ \sup_{\sigma \leq s \leq \tau} X_1^{N}(s) \right] \right].
    \end{align}
    Similarly, we find that
    \begin{align} \label{eq:Amu}
        \mathbb{E} \left[ \left|  A^{N, f}(\tau) -  A^{N, f}(\sigma) \right| \right] \leq& \, \delta \, C \, ||f||_\infty^2 \left[  \mathbb{E} \left[ \sup_{\sigma \leq s \leq \tau} \left< \mu_s^{N}, 1 \right> \right] + \mathbb{E} \left[ \sup_{\sigma \leq s \leq \tau} X_1^{N}(s) \right] \right].
    \end{align}
    By Proposition \ref{prop:moments}, and given that $f$ is bounded, $\left< M^{N, f} \right>$ and $A^{N, f}$ both satisfy the Aldous condition. Hence, all the conditions for the collection of laws $\left(\mu^{N}_t, \, t \in [0, T] \right)_N$ to be tight in $\mathcal{P} (\mathbb{D}([0, T]), \, (M_F, w))$ are satisfied. \\ \, \\
    
    Finally, we have that $X_N^{N} (t)$ is a semimartingale given by
    \begin{align*}
        X_N^{N}(t) =& \, X_N^{N}(0) + A^N_N(t) + M^{N}(t).
    \end{align*}
    As for $X_1^N$, to prove the tightness of the sequence $\left( \mathcal{L} \left( X_N^{N} \right) \right)_{N}$, we will prove that the sequence $\left( \mathcal{L} \left( \sup_{t \leq T} \left| X_N^{N} (t) \right| \right) \right)_{N}$ is tight and that $A^N_N$ and $\left< M^N \right>$ satisfy the Aldous condition. \\ \, \\
    The tightness of the sequence of laws of $(\sup_{t \leq T} \left| X_N^{N} (t) \right|)_N$ follows from Markov's inequality and previous estimates. \\ \, \\		
    Similar computations can be done as the ones for $X_1^N(t)$ to find that for $\delta > 0$ and $\sigma$, $\tau$ two stopping times such that $\sigma \leq \tau \leq (\sigma + \delta) \wedge T$, we have
    \begin{align*}
        \mathbb{P} \left( \left| A^N_N(\sigma) - A^N_N(\tau) \right| > \eta \right) \leq& \, \frac{1}{\eta} \, \left[ \mathbb{E} \left[ \left|\int_\sigma^\tau m\left( \frac{N - 1}{N}, \, X_N^N(s) \right) X_{N - 1}^N (u) \right| \right] + \delta \, d \, \mathbb{E} \left[ \sup_{\sigma \leq u \leq \tau} X_N^{N}(u) \right] \right],
    \end{align*}
    and
    \begin{align*}
        \mathbb{P} \left( \left| \left< M^N \right>_{\tau} - \left< M^N \right>_{\sigma} \right| > \eta \right) \leq& \, \frac{1}{N}\ \frac{1}{\eta} \, \left[  \mathbb{E} \left[ \int_\sigma^\tau m\left( \frac{N - 1}{N}, \, X_N^N(s) \right) X_{N - 1}^N (u) \right] + \delta \, d \, \mathbb{E} \left[ \sup_{\sigma \leq u \leq \tau} X_N^{N}(u) \right] \right].
     \end{align*}
    We use equation (\ref{eq:XN-1}) to deal with the term involving $X_{N - 1}^{N}$, by studying the expectation of the equation. We observe that
    \begin{align} \label{eq:EmXN}
       &  \mathbb{E} \left[ \int_\sigma^\tau m\left( \frac{N - 1}{N}, \, X_N^N(s) \right) X_{N -1}^N(s) \, ds \right] \\
        & \qquad \leq   \, \mathbb{E} \left[ \left|\left< \mu_{\sigma}^N, 1 \right> - \left< \mu_\tau^N, 1 \right> \right|\right] + \mathbb{E} \left[  \left| M^{N, 1}(\tau) - M^{N, 1}(\sigma) \right| \right] \nonumber \\
&        \qquad \qquad + \mathbb{E} \left[ \left| \int_\sigma^\tau \left< \mu_s^N, r \left( \cdot, \, X_N^N(s) \right) \right> \, ds + \int_\sigma^\tau m\left( \frac{1}{N}, \, X_N^N(s) \right) X_1^N(s) \, ds  \right| \right]. \nonumber
    \end{align}
    We study each term in the right hand side individually. By (\ref{eq:Mmu}) it yields that
    \begin{align*}
        \mathbb{E} \left[  \left| M^{N, 1}(\tau) - M^{N, 1}(\sigma) \right| \right] &\leq \left( \mathbb{E} \left[ \left< M^{N, 1} \right>_\tau - \left<M^{N, 1}\right>_\sigma \right] \right)^{1/2} \\&\leq \, \delta^{1/2} \, C \left[ \mathbb{E} \left[ \sup_{\sigma \leq s \leq \tau} \left< \mu_s^{N}, 1 \right> \right] + \mathbb{E} \left[ \sup_{\sigma \leq s \leq \tau} X_1^{N}(s) \right] \right]^{1/2}.
    \end{align*}
    Using this argument and (\ref{eq:Amu}) it follows that
    \begin{align*}
        \mathbb{E} \left[\left| \left< \mu_{\sigma}^N, 1 \right> - \left< \mu_\tau^N, 1 \right> \right|\right] \leq& \mathbb{E} \left[ \left|  A^{N, 1}(\tau) -  A^{N, 1}(\sigma) \right| \right] + \left(\mathbb{E} \left[ \left| \left<  M^{N,1} \right>_{\sigma} - \left<  M^{N, 1} \right>_{\tau} \right| \right] \right)^{1/2} \\ \leq& \, \delta \, C \left[  \mathbb{E} \left[ \sup_{\sigma \leq s \leq \tau} \left< \mu_s^{N}, 1 \right> \right] + \mathbb{E} \left[ \sup_{\sigma \leq s \leq \tau} X_1^{N}(s) \right] \right] \\&+ \left( \delta \, C \left[ \mathbb{E} \left[ \sup_{\sigma \leq s \leq \tau} \left< \mu_s^{N}, 1 \right> \right] + \mathbb{E} \left[ \sup_{\sigma \leq s \leq \tau} X_1^{N}(s) \right] \right] \right)^{1/2}.
    \end{align*}
    Finally, by Assumption \ref{assp:rm} we get that
    \begin{align*}
        &\mathbb E\left(\int_\sigma^\tau \left< \mu_s^N, r \left( \cdot, \, X_N^N(s) \right) \right> \, ds + \int_\sigma^\tau m\left( \frac{1}{N}, \, X_N^N(s) \right) X_1^N(s) \, ds \right) \\
        &\qquad \qquad \leq \delta \, C \left[ \mathbb{E} \left[ \sup_{\sigma \leq s \leq \tau} \left< \mu_s^N, 1 \right>  \right] + \mathbb{E} \left[ \sup_{\sigma \leq s \leq \tau} X_N^1(s)  \right] \right].
    \end{align*}
    Hence, by gathering these results and using Proposition \ref{prop:moments}, we obtain that (\ref{eq:EmXN}) tends to $0$ when $\delta \rightarrow 0$ uniformly in $N$, which implies the same for $\mathbb{P} \left( \left| A^N_N(\tau) - A^N_N(\sigma) \right| > \eta \right)$ and $\mathbb{P} \left( \left| \left< M^N\right>_{\tau} - \left< M^N \right>_{\sigma} \right| > \eta \right)$. Hence, the processes $A_N^N$ and $\left< M^N \right>$ satisfy the Aldous condition. Then, $\mathcal{L}(X_N^{N})$ is tight in $\mathcal{P} (\mathbb{D}([0, T], \, \mathbb{R}))$. This ends the proof.
\end{proof} \, \\

\subsection{Identification of the limit} \label{subsec:iden}
The tightness of the sequence of laws of $(X_1^{N}, \, \mu_t^N, \, X_N^N)_N$ guarantees the existence of converging subsequences. Let us now identify the system satisfied by these  limiting values. Recall that the initial condition $(a_0, \, \mu_0, \, z_0)$ defined in Assumption \ref{assp:lim} is deterministic. 
\begin{prop} \label{prop:iden}
    Under Assumptions \ref{assp:rm}-\ref{assp:lim}, let $T > 0$ and consider a subsequence $(X_1^{N_k}, \, \mu^{N_k}, \, X_{N_k}^{N_k})$ that converges in law in $\mathbb{D}([0, T], \, \mathbb{R}^+) \times \mathbb{D}([0, T], \, (M_f, w)) \times \mathbb{D}([0, T], \, \mathbb{R}^+)$ to a limit $(a, \, \mu, \, z)$ as $N_k \rightarrow \infty$. Then, this limit is deterministic, belongs to $\mathcal{C}([0, T], \mathbb{R}^+) \times \mathcal{C}([0, T], M_F) \times \mathcal{C}([0, T], \mathbb{R}^+)$, and for any function $f\in \mathcal{C}^1([0,1])$ and $t \in [0, T]$, it satisfies the system 
    \begin{align} \label{eq:lim}
    \begin{cases}
        &a(t) = \, a_0 + \int_0^t \left[r \left(0, z(s) \right) - m \left(0, z(s) \right) \right]  a(s) \, ds \, ; \\
        &\left< \mu_t, f \right> = \, f(0) \int_0^t m\left(0, z(s) \right) a(s) \, ds + \left< \mu_0, f \right> + \int_0^t \left< \mu_s, f \, r\left(\cdot, z(s) \right) \right> \, ds + \int_0^t \left< \mu_s, f' \, m\left(\cdot, z(s) \right) \right> \, ds\, ;  \\
        &\ \ \ \ \ \ \ \ \ \ \ \ - f(1) \left[ \left< \mu_0, 1 \right> - \left< \mu_t, 1 \right> + \int_0^t \left< \mu_s, r\left(\cdot, z(s) \right) \right> \, ds +  \int_0^t m\left(0, z(s) \right) a(s) \, ds  \right] \, ;\\
        &z(t) = \, z_0 +\left< \mu_0, 1 \right> - \left< \mu_t, 1 \right> + \int_0^t \left< \mu_s, r\left(\cdot, z(s) \right) \right> \, ds +  \int_0^t m\left(0, z(s) \right) a(s) \, ds - d \int_0^t z(s) \, ds \, ;\\
        &\mu_t \left( \left\{ 0 \right\} \right) = \mu_t \left( \left\{ 1 \right\} \right) = 0 \qquad \text{for almost every } t \in [0, T]\, .
        \end{cases}
    \end{align}
\end{prop}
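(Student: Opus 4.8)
The plan is to run the standard "pass to the limit in the martingale problem" argument, using the semimartingale decomposition from Proposition \ref{prop:sde}, the moment bounds from Proposition \ref{prop:moments}, and the quadratic variations from Proposition \ref{prop:mart}; the uniqueness of the limit point will be postponed to a later subsection, so here we only need to show that \emph{any} subsequential limit solves \eqref{eq:lim} and is deterministic and continuous. First I would invoke tightness (Proposition \ref{prop:tight}) together with Prokhorov to get a weakly convergent subsequence $(X_1^{N_k},\mu^{N_k},X_{N_k}^{N_k})\Rightarrow(a,\mu,z)$; by Skorokhod's representation theorem I may assume this convergence holds almost surely on a common probability space, in $\mathbb{D}([0,T],\mathbb{R}^+)\times\mathbb{D}([0,T],(M_F,w))\times\mathbb{D}([0,T],\mathbb{R}^+)$. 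The continuity of the limit processes follows from the fact that the jumps of $X_1^{N}$, $\langle\mu^N,f\rangle$ and $X_N^N$ are all of size $O(1/N)$ (for fixed $f$), so $\sup_{t\le T}|\Delta X_1^N(t)|\to 0$ etc., which forces the limit to lie in $\mathcal{C}$; I would state this via the standard criterion that a limit in $\mathbb{D}$ of processes with vanishing maximal jump is continuous.

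Next I would pass to the limit termwise in the three equations of \eqref{eq:cont}–\eqref{eq:A}. The martingale terms vanish: from Proposition \ref{prop:mart} every quadratic variation carries a prefactor $1/N$ and, after bounding $r,m$ by $\widehat r,\widehat m$ and using the uniform second-moment bound \eqref{eq:EY2} together with Doob's inequality, $\mathbb{E}[\sup_{t\le T}(M_1^{N}(t))^2]$, $\mathbb{E}[\sup_{t\le T}(M^{N,f}(t))^2]$ and $\mathbb{E}[\sup_{t\le T}(M^{N}(t))^2]$ all go to $0$; hence these martingales converge to $0$ in $L^2$ and in probability uniformly on $[0,T]$. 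For the drift terms I would use that $r(\cdot,z)$ and $m(\cdot,z)$ are continuous, that $x\mapsto X_N^N(x)$ enters only through the time argument $X_N^N(s)$ which converges a.s. in $\mathbb{D}$ to the continuous $z$, and the Lipschitz bounds in Assumption \ref{assp:rm} to replace $r(i/N,\cdot),m(i/N,\cdot),m((N-1)/N,\cdot)$ by $r(0,\cdot)$-type limits where forced and, crucially, to handle the discrete derivative: $\Delta_{1/N}f(x)\to f'(x)$ uniformly on $[0,1]$ for $f\in\mathcal{C}^1$, so $\langle\mu_s^N,(\Delta_{1/N}f)\,m(\cdot,X_N^N(s))\rangle\to\langle\mu_s,f'\,m(\cdot,z(s))\rangle$. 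One also needs $f(2/N)\to f(0)$ and $m(1/N,\cdot)\to m(0,\cdot)$ for the boundary source term, and $m((N-1)/N,\cdot)\to m(1,\cdot)$; note that in the limiting system \eqref{eq:lim} the outflow at $x=1$ has been eliminated using the identity \eqref{eq:XN-1}, so the term $m((N-1)/N,X_N^N)X_{N-1}^N$ never has to be passed to the limit directly. Throughout, the bounded-test-function integrals $\int_0^t\langle\mu_s^N,g(\cdot,X_N^N(s))\rangle\,ds$ converge by dominated convergence once one knows $s\mapsto\langle\mu_s,g(\cdot,z(s))\rangle$ is the a.s. limit of the integrands for a.e. $s$ and the integrands are uniformly bounded in $L^1$ via Proposition \ref{prop:moments}; here I would use the Skorokhod a.s. convergence of $\mu^N$ in $\mathbb{D}([0,T],(M_F,w))$, evaluated at continuity points $s$, together with continuity of $g(\cdot,z(s))$.

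It remains to identify the support conditions $\mu_t(\{0\})=\mu_t(\{1\})=0$ for a.e.\ $t$ and to argue determinism. For the support: by construction $\mu^N_t=\tfrac1N\sum_{i=2}^{N-1}X_i^N(t)\delta_{i/N}$ puts no mass at $0$ or $1$, but weak limits can charge the endpoints, so one must show that the mass near the boundary is negligible in the limit; I would control $\mathbb{E}[\mu^N_t([0,\varepsilon])+\mu^N_t([1-\varepsilon,1])]$ using Proposition \ref{prop:Efin}, which bounds $\int_0^T\mathbb{E}[X_i^N(s)]\,ds$ uniformly in $i$ and $N$, so that $\int_0^T\mathbb{E}[\mu^N_s([0,\varepsilon]\cup[1-\varepsilon,1])]\,ds\le C\varepsilon$ (roughly, the number of indices $i$ with $i/N\le\varepsilon$ is $\sim\varepsilon N$, each contributing $O(1/N)$ times a uniformly bounded time-averaged mass), and then pass to the limit in $N$ and let $\varepsilon\to0$, using the portmanteau theorem with the open sets $[0,\varepsilon)$ and $(1-\varepsilon,1]$ together with Fatou to conclude $\int_0^T\mathbb{E}[\mu_s(\{0\})+\mu_s(\{1\})]\,ds=0$. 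For determinism, note the limit $(a,\mu,z)$ satisfies system \eqref{eq:lim} pathwise; once uniqueness of solutions of \eqref{eq:lim} is established (in the later subsection) and the initial datum $(a_0,\mu_0,z_0)$ is deterministic by Assumption \ref{assp:lim}, the law of $(a,\mu,z)$ must be a Dirac mass, hence the limit is deterministic — alternatively one can observe directly that any solution is determined by $(a_0,\mu_0,z_0)$ and defer, which is the route I would take here to keep the proof self-contained at this stage. I expect the \textbf{main obstacle} to be the boundary/support analysis: showing that no mass accumulates at $x=0$ or $x=1$ in the limit and that the boundary source and sink terms pass correctly to the limit despite the fast transport, since the near-boundary compartments are exactly where the slow–fast scaling is most delicate; the vanishing-jump continuity argument and the $L^2$ decay of the martingales are routine by comparison.
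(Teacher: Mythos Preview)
Your proposal is correct and follows essentially the same strategy as the paper: continuity of the limit via vanishing jump sizes, vanishing of the martingales via the $1/N$ quadratic variations and the second-moment bound, passage to the limit in the drifts using the continuity of $r,m$ and $\Delta_{1/N}f\to f'$, and control of the boundary mass via Proposition~\ref{prop:Efin}. The only technical difference is packaging: the paper does not invoke Skorokhod but instead defines functionals $\Psi^i_t$ (so that $\Psi^i_t(X_1^N,\mu^N,X_N^N)$ equals the martingale $M_1^N$, $M^{N,f}$, $M^N$), checks their continuity on $\mathcal{C}$-paths and uniform integrability, and concludes $\mathbb{E}|\Psi^i_t(a,\mu,z)|=0$ directly from convergence in law; similarly, for $\mu_t(\{0\})=\mu_t(\{1\})=0$ it uses continuous bump functions $f_\varepsilon,f_\delta$ and uniform integrability rather than your portmanteau/Fatou argument.
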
 \, \\

\begin{proof}
    Let $\mathbb{D}^3 = \mathbb{D}([0, T], \, \mathbb{R}^+) \times \mathbb{D}([0, T], \, (M_f, w)) \times \mathbb{D}([0, T], \, \mathbb{R}^+)$. Let us consider a limiting value $(a, \, \mu, \, z)$ in $\mathbb{D}^3$, which is the limit in law of the subsequence $(X_1^{N_k}, \, \mu^{N_k}, \, X_{N_k}^{N_k})$ when $N_k \rightarrow \infty$. By simplicity, we will denote this subsequence by $(X_1^{N}, \, \mu^{N}, \, X_N^{N})$. Let us first show that $(a, \, \mu, \, z)$ has continuous paths almost surely. \\
    
    For $\nu \in \mathbb{D} ([0, T], \, (M_F, w))$, $t \geq 0$ and $x \in \mathbb{D} ([0, T], \, \mathbb{R}^+)$, we define  $| \Delta \nu_t| = || \nu_t  - \nu_{t^-} ||_{BL}$ and $\Delta |x(t)| = |x(t) - x(t^-)|$. Notice that the application
    \begin{align*}
        (x, \, \nu, \, z) \in \mathbb{D}^3 \mapsto \sup_{t \leq T} (| \Delta x(t) |, \, | \Delta \nu_t  |, \, | \Delta z(t)| ) \in (\mathbb{R}^+)^3
    \end{align*}
    is continuous. Moreover, we easily observe that
    \begin{equation*}
        |\Delta X_1^{N}(t) |\leq \frac{1}{N}, \quad |\Delta \mu_t^{N} |\leq \frac{1}{N}, \quad|\Delta X_{N}^{N}(t)| \leq \frac{1}{N},
    \end{equation*}
    and then $(a, \, \mu, \, z)$ has continuous paths almost surely. 

    To prove that the triplet $(a, \, \mu, \, z)$ is a solution of System (\ref{eq:lim}), let us consider $f \in \mathcal{C}^1([0, 1])$ and $T \geq 0$. We define for any $t \in [0, T]$, the functions $\Psi_t^i (x, \nu, y): \mathbb{D}^3 \rightarrow \mathbb{R}$, for $i = 1, 2, 3$ by
    \begin{align*}
        \Psi_t^1(x, \nu, y) =& \, x(t) - x_0 - \int_0^t \left(r(0, z(s)) - m(0, y) \right) x(s) \, ds\, ; \\
        \Psi_t^2(x, \nu, y) =& \, \left< \nu_t, f \right> - \left< \nu_0, f \right> - \int_0^t \left< \nu_s, f \, r(\cdot, y) \right> \, ds + \int_0^t \left< \nu_s, f' \, m(\cdot, y) \right> \, ds \\&
        + f(0) \int_0^t m(0, y) x(s) ds \\ 
        &+ f(1) \left[ \left< \nu_0, 1 \right> - \left< \nu_t, 1 \right> + \int_0^t \left< \nu_s, r(\cdot, y) \right> \, ds + \int_0^t m(0, y) x(s) \, ds \right] \, ; \\
        \Psi_t^3(x, \nu, y) =& \, y(t) - y_0 - \left< \nu_0, 1 \right> + \left< \nu_t, 1 \right> - \int_0^t \left< \nu_s, r(\cdot, y) \right> \, ds - \int_0^t m(0, y) x(s) \, ds  + d \int_0^t y(s) \, ds\, .
    \end{align*}
    We will prove that $\mathbb{E} \left[ \left| \Psi_t^i(a, \mu, z) \right| \right] = 0$ for $i \in \{ 1, 2, 3 \}$ and $t\in[0,T]$, which will give the desired result. \\ \, \\
    Since the sequence of triplets $( X_1^{N}, \mu^{N}, X_N^{N} )_N$ is $\mathcal{C}$-tight, and since the functions $\Psi^i$, $i=1, 2, 3$ are continuous on $\mathcal{C}([0, T], \, \mathbb{R}^+) \times \mathcal{C}([0, T], \, (M_F, w)) \times \mathcal{C}([0, T], \, \mathbb{R}^+)$, we can deduce that $\Psi_t \left( X_1^{N}, \mu^{N}, X_N^{N} \right)$ converges in law to $\Psi_t(a, \mu, z)$ as $N \rightarrow \infty$. Let us check that the sequences $(\Psi^i_t(X_1^{N}, \mu^{N}, X_N^{N}))_N$ are uniformly integrable to get convergence in $L^1$. \\ \, \\
    By Assumption \ref{assp:lim} and Proposition \ref{prop:tight}, and since $f \in \mathcal{C}^1([0, 1])$, it follows that for any $N$
    \begin{align*}
        \left| \Psi_t^1 \left( X_1^{N}, \mu^N, X_N^N \right) \right| \leq& \, X_1^{N} (t) + X_1^{N} (0) +  \left(\widehat{r} + \widehat{m}\right) \int_0^t X_1^{N} (s) \, ds \\
        \leq& \,  C \sup_{s \leq T} X_1^{N} (s)\, ; \\
        \left| \Psi_t^2(X_1^N, \mu^N, X_N^N) \right| \leq& \, C \, ||f||_\infty \left[ \left< \mu^N_t, 1 \right> + \left< \mu^N_0, 1 \right> + \int_0^t \left< \mu^N_s, 1 \right> \, ds + \int_0^t X_1^N(s) \, ds \right]\\ 
        \leq& \, C \, ||f||_\infty\sup_{s \leq T} \left( X_1^N(s) +  \left< \mu^N_s, 1 \right> \right) \, ;\\
        \left| \Psi_t^3 \left( X_1^N, \mu^N, X_N^{N} \right) \right| \leq& \, X_N^N(t) + X_N^N(0) +  \left< \mu_0, 1 \right> + \left< \mu_t, 1 \right> + \widehat{r} \int_0^t \left< \mu_s, 1 \right> \, ds + \widehat{m} \int_0^t X_1^N(s) \, ds \\
        &+ d \int_0^t X_N^N(s) \, ds \\
        \leq& C \sup_{s \leq T} \left(X_1^{N} (s) + \left< \mu^N_s, 1 \right> + X_N^{N} (s) \right).
    \end{align*}
    Then, 
    \begin{align*}
        \left| \Psi^1_t \left( X_1^{N}, \mu^N, X_N^N \right) \right| \mathbb{1}_{\left| \Psi^1_t \left( X_1^{N}, \mu^N, X_N^N \right) \right| > \eta} \leq& \, \frac{1}{\eta}\, \left| \Psi^1_t \left(  X_1^{N}, \mu^N, X_N^N \right) \right|^2 \leq \, \frac{C}{\eta}\, \sup_{s \leq T} \left( X_1^{N} (s) \right)^2 \, ; \\ 
        \left| \Psi^2_t \left( X_1^{N}, \mu^N, X_N^N \right) \right| \mathbb{1}_{\left| \Psi^2_t \left( X_1^{N}, \mu^N, X_N^N \right) \right| > \eta} \leq& \, \frac{1}{\eta}\,\left| \Psi^2_t \left(  X_1^{N}, \mu^N, X_N^N \right) \right|^2 \\
        \leq& \, \frac{C}{\eta}\, ||f||_\infty \sup_{s \leq T} \left( X_1^{N} (s) + \left< \mu^N_s, 1 \right> \right)^2 \, ; \\ 
        \left| \Psi^3_t \left( X_1^{N}, \mu^N, X_N^N \right) \right| \mathbb{1}_{\left| \Psi^3_t \left( X_1^{N}, \mu^N, X_N^N \right) \right| > \eta} \leq& \, \frac{1}{\eta}\,\left| \Psi^3_t \left(  X_1^{N}, \mu^N, X_N^N \right) \right|^2 \\
        \leq & \, \frac{C}{\eta}\, \sup_{s \leq T} \left( X_1^{N} (s) + \left< \mu^N_s, 1 \right> + X_N^N(s) \right)^2.
    \end{align*}
    From Proposition \ref{prop:moments} and Assumption \ref{assp:lim} and since $f$ is bounded, the sequences $(\Psi^1_t(X_1^{N}, \mu^N, X_N^N))_{N}$, $(\Psi^2_t(X_1^{N}, \mu^N, X_N^N))_{N}$ and $(\Psi^3_t(X_1^{N}, \mu^N, X_N^N))_{N}$ are uniformly integrable for any $t \in [0, T]$.\\  \\
    Combining convergence in law and uniform integrability of the subsequences, we obtain that
    \begin{align*}
        \mathbb{E} \left[ \left| \Psi^i_t(a, \mu, z) \right| \right] &= \lim_{N \rightarrow \infty} \mathbb{E} \left[ \left| \Psi^i_t(X_1^{N}, \mu^{N}, X_{N}^{N}) \right| \right] \leq 
        \lim_{N \rightarrow \infty} \left(\mathbb{E} \left[  \Psi^i_t(X_1^{N}, \mu^{N}, X_{N}^{N})^2\right] \right)^{1/2}.
    \end{align*}
    Recognizing the martingales 
    \begin{equation*}
        \Psi^1_t(X_1^{N}, \mu^{N}, X_N^{N}) = M_1^{N}(t), \quad  \Psi^2_t(X_1^{N}, \mu^{N}, X_N^{N}) = M^{N, f}(t), \quad  \Psi^3_t(X_1^{N}, \mu^{N}, X_N^{N}) = M^{N}(t)
    \end{equation*}
    from System \eqref{eq:cont}, Proposition \ref{prop:mart} shows that these martingales are of order $1/N$ and ensures the convergence of the expectations of each $\Psi^i$ to $0$ when $N \rightarrow \infty$. Then, we can finally conclude that any limiting value of the sequence $(X_1^{N}, \, \mu_t^N, \, X_N^N)_N$ is deterministic and is a solution of the system (\ref{eq:lim}). \\ \ \\  
    Recall that the subsequence $(X_1^{N} , \, \mu^{N}, \, X_N^{N})_{N}$ converges to a limit $(a, \, \mu, \, z)_t$. We will now proceed to prove that for any limiting value, we have almost surely, almost everywhere in $t \in [0,T]$, $\mu_t(\left\{ 0 \right\} ) = 0$ and $\mu_t(\left\{ 1 \right\} ) = 0$. \\ \, \\
    Let $\varepsilon, \, \delta\in (0, 1)$ and $f_\varepsilon, \, f_{\delta} \in \mathcal{C}([0, 1])$ be two continuous positive functions such that
    \begin{align*}
        f_{\varepsilon}(0) &= 1, \quad 	f_{\varepsilon}(x) = 0 \text{ for } x \geq \varepsilon, \quad \| f_\varepsilon \|_\infty \leq 1 \, ;\\
        f_{\delta}(1) &= 1, \quad 	f_{\delta}(x) = 0 \text{ for } x \leq 1 - \delta, \quad \| f_\delta \|_\infty \leq 1.
    \end{align*}
    We study $\left(\left< \mu_t, f_{\varepsilon} \right> \right)_{t \geq 0}$. Since by construction for any $2 \leq i \leq N - 1$ such that $\varepsilon \leq i/N$ we have $f_\varepsilon(i/N) = 0$, then
    \begin{align*}
        \int_0^T \mathbb{E} \left[ \left< \mu_s^{N}, f_{\varepsilon} \right> \right] \, ds = \int_0^T \mathbb{E} \left[ \frac{1}{N} \sum_{i = 2}^{\lceil N \varepsilon \rceil} X_i^{N}(s) \,  f_{\varepsilon} \left( \frac{i}{N} \right) \right] \, ds = \frac{1}{N} \sum_{i = 2}^{\lceil N \varepsilon \rceil} \int_0^T \mathbb{E} \left[ X_i^{N}(s) \right] \, ds.
    \end{align*}
    Using Proposition \ref{prop:Efin} and Assumption \ref{assp:lim} we obtain
    \begin{align} \label{major}
        \int_0^T \mathbb{E} \left[ \left< \mu_s^{N}, f_{\varepsilon} \right> \right] \, ds \leq \frac{C_T}{N} \left(\lceil N \, \varepsilon \rceil - 1 \right)
        \leq \varepsilon \, C_T. 
    \end{align}
    Since $\| f_\varepsilon \|_\infty \leq 1$, we have that
    \begin{equation*}
        0 \leq \left< \mu_t^{N}, f_\varepsilon \right> \leq \left< \mu_t^{N}, 1 \right>,
    \end{equation*}
    which implies that
    \begin{equation*}
        \int_0^t \left< \mu_s^{N}, f_\varepsilon \right> \, ds \, \mathbb{1}_{\int_0^t \left< \mu_s^{N}, f_\varepsilon \right> ds > \eta} \leq \frac{1}{\eta} \left( \int_0^t \left< \mu_s^{N}, f_\varepsilon \right> \, ds \right)^2 \leq \frac{1}{\eta} \left( \int_0^t \left< \mu_s^{N}, 1 \right> \, ds\right)^2.
    \end{equation*}
    As the right hand side has a bounded expectation with respect to $N$ for all $t \in [0, T]$ by Proposition \ref{prop:moments} and Assumption \ref{assp:lim}, we obtain the uniform integrability of the sequence $\left(\int_0^T \left< \mu_s^{N}, f_\varepsilon \right> ds\right)_{N}$. \\ \, \\
    Using that $\mu^{N}$ converges in law to $\mu$ in $\mathbb{D}([0, T], \, (M_F, w))$ as $N$ tends to infinity, then $\int_0^T \left< \mu_s^{N}, f_\varepsilon \right> ds$ converges in law to $\int_0^t\left< \mu_s, f_\varepsilon \right> ds$. By uniform integrability we get 
    \begin{equation*}
        \mathbb{E} \left[ \int_0^T \left< \mu_s, f_\varepsilon \right> \, ds \right] = \lim_{N \rightarrow \infty} \mathbb{E} \left[ \int_0^T \left< \mu_s^{N}, f_\varepsilon \right> \, ds \right].
    \end{equation*}
   By \eqref{major}, it follows that
    \begin{equation*}
        \mathbb{E} \left[ \int_0^T \left< \mu_s, f_\varepsilon \right> ds \right] \leq \varepsilon \, C_T.
    \end{equation*}
    Finally, when $\varepsilon \rightarrow 0$ we get by monotone limit
    \begin{equation*}
        \int_0^T \mu_s(\{0\}) \, ds = \lim_{\varepsilon \rightarrow 0} \int_0^T \left< \mu_s, f_{\varepsilon} \right> \, ds = 0 \quad a.s.
    \end{equation*} \, \\
    Hence,  almost everywhere in $t$, we have 
    \begin{equation*}
        \mu_t( \left\{ 0 \right\} ) = 0.
    \end{equation*}\ \\
    By applying similar computations to $\left< \mu_u^{N}, f_\delta \right>$, we find that
    \begin{equation*}
        \int_0^T \mathbb{E} \left[ \left< \mu_s^{N}, f_\delta \right> \right] \, ds \leq \delta \, C_T.
    \end{equation*}
    Therefore we get that almost everywhere in $t$,
    \begin{equation*}
        \mu_t( \left\{ 1 \right\} ) = 0.
    \end{equation*}
    This proves the proposition.
\end{proof}  \, \\

\subsection{Uniqueness} \label{subsec:uniq}
Having found the characterization of the limiting system \eqref{eq:lim}, we are interested in studying the uniqueness of its solution. We start by introducing a classical flow construction to adapt the characteristics approach which will allow us to prove uniqueness. \\

We consider $z\in \mathcal C^1(\mathbb R_+)$ and extend this function to $z\in \mathcal C^1(\mathbb R)$ in such a way that it remains bounded with bounded derivatives. Then 
for any $(t, x) \in \mathbb R^2$, we consider the solution $M^z(\cdot,t,x)$ of the (non-autonomous) Cauchy problem defined for $s\in \mathbb R$ by
\begin{equation}\label{eq:mat}
    \frac{\partial M^z}{\partial s} (s, t, x) = m(M^z(s, t, x), \, z(s)), \quad M^z(t, t, x) = x,
\end{equation}
where $m$ has been extended to a function of $\mathcal C^1(\mathbb R\times \mathbb R)$ in such a way that it remains bounded, with bounded derivative. The function $M^z(\cdot, t, x)$ represents the dynamics of the maturation of a cell whose level at time $t$ is given by $x$. 
The function $m$ is Lipschitz continuous in $x\in \mathbb R$ uniformly with respect to the other variable. Then we can use the classical Cauchy Lipschitz theory (see \citet{HirschSmale} Chapter 15) and obtain existence and uniqueness of the Cauchy problem $(\ref{eq:mat})$ on a maximal time interval. Adding that $m$ is bounded, the solution cannot explode in finite time. Thus, $M^z$ is well defined for any time and generates a flow in time and space on $\mathbb R^3$. This flow is also a diffeomorphism in space, which can be seen by reversing the flow in time. Observe that in our model, $m$ is bounded on $\mathbb R\times \mathbb R$ and the flow is also a diffeomorphism in time. More precisely, let us gather these useful and classical properties in the following statement.
\begin{prop} \label{prop:diff}
    \begin{enumerate}
  The following properties hold :
        \item[i)] For all $(t, x) \in \mathbb R^2$, System (\ref{eq:mat}) admits a unique solution $M^z(\cdot, t, x)$ on $\mathbb R$. It defines an application $ M^z: (s, t, x)\in  \mathbb{R} \times \mathbb{R} \times \mathbb R 
             \mapsto M^z(s, t, x) \in \mathbb{R}.$
        \item[ii)] For all  $t_1, t_2, t_3 \in \mathbb R$ and $x\in \mathbb R$,  
        \begin{equation}
        \label{eq:flow}
            M^z(t_1, t_3, x) = M^z (t_2, t_3, M^z(t_1, t_2, x)),
        \end{equation}
        \item[iii)] The function $M^z$ belongs to $\mathcal C^1(\mathbb R^3, \mathbb R)$.
        \item[iv)] For all $s,t \in \mathbb R$, the function $x \in \mathbb R \mapsto M^z(s, t, x) \in \mathbb R$ is a $\mathcal{C}^1$-diffeomorphism.
        \item[v)] For all $s \in \mathbb{R}$ and  $x \in \mathbb{R} $, the function $t \in \mathbb{R} \mapsto M^z(s, t, x) \in \mathbb{R}$ is a $\mathcal{C}^1$-diffeomorphism. The inverse function $(s, y, x) \in \mathbb{R}^3 \mapsto \kappa(s, y, x) \in \mathbb{R}$ satisfying  $M^z(s, \kappa(s, y, x) , x) = y$ on $\mathbb R^3$ belongs to $\mathcal{C}^1(\mathbb{R}^3, \mathbb{R})$.
        \item[vi)] For any $(s,t,x) \in \mathbb R^3$,
        \begin{align}
            \partial_s M^z(s,t,x) + m(x, z(s)) \, \partial_x M^z(s,t,x) = 0. 
        \end{align} 
    \end{enumerate}
\end{prop}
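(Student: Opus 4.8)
The strategy is to obtain all six items from the classical theory of non-autonomous scalar ODEs applied to the right-hand side $F(s,\xi):=m(\xi,z(s))$. By Assumption~\ref{assp:rm} and since $z\in\mathcal C^1(\mathbb R)$ (with bounded derivative after extension), $F$ is of class $\mathcal C^1$ on $\mathbb R^2$, globally Lipschitz in $\xi$ uniformly in $s$, bounded by $\widehat m$, and bounded below by $m_{min}>0$. For i), the Cauchy--Lipschitz theorem (\citet{HirschSmale}, Chapter~15) gives a unique maximal solution $M^z(\cdot,t,x)$ for each $(t,x)$; integrating \eqref{eq:mat} and using $|F|\le\widehat m$ gives the a priori bound $|M^z(s,t,x)-x|\le\widehat m\,|s-t|$, so the solution stays bounded on bounded time intervals and is therefore global on $\mathbb R$. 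For iii), since $F\in\mathcal C^1(\mathbb R^2)$, the classical theorem on differentiable dependence on the initial time, the initial value and parameters applies and yields $M^z\in\mathcal C^1(\mathbb R^3)$, with $\partial_s M^z(s,t,x)=F(s,M^z(s,t,x))$; moreover $\partial_x M^z$ and $\partial_t M^z$ solve, as functions of $s$, the scalar linear variational equation $\dot y(s)=\partial_\xi m(M^z(s,t,x),z(s))\,y(s)$. For ii), I would argue by uniqueness of the Cauchy problem: two maximal solution curves sharing one point coincide, and since $M^z(\cdot,t_3,x)$ passes through $(t_2,M^z(t_2,t_3,x))$ it equals $M^z(\cdot,t_2,M^z(t_2,t_3,x))$; rearranging this, together with the reversibility $M^z(t,s,M^z(s,t,x))=x$ (itself a consequence of uniqueness), gives the flow identity \eqref{eq:flow}.

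For iv), the variational equation integrates to
\[
\partial_x M^z(s,t,x)=\exp\Bigl(\int_t^s \partial_\xi m(M^z(u,t,x),z(u))\,du\Bigr)>0 ,
\]
so $x\mapsto M^z(s,t,x)$ is strictly increasing and of class $\mathcal C^1$, with $\mathcal C^1$ inverse given by the reversed flow $x\mapsto M^z(t,s,x)$; hence it is a $\mathcal C^1$-diffeomorphism of $\mathbb R$. For v), since $\partial_t M^z$ and $\partial_x M^z$ solve the same scalar linear homogeneous equation in $s$, they are proportional, and comparing their values at $s=t$ (where $\partial_x M^z=1$ and $\partial_t M^z(t,t,x)=-m(x,z(t))$, the latter from $\tfrac{d}{dt}M^z(t,t,x)=0$) gives $\partial_t M^z(s,t,x)=-m(x,z(t))\,\partial_x M^z(s,t,x)<0$; thus $t\mapsto M^z(s,t,x)$ is strictly decreasing. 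Its image is all of $\mathbb R$ because, for fixed $(s,y)$, the curve $u\mapsto M^z(u,s,y)$ has derivative $m(\cdot)\ge m_{min}$, hence ranges over $\mathbb R$ and meets the level $x$ at exactly one time; by reversibility this time is precisely the $\kappa(s,y,x)$ of the statement, and $\kappa\in\mathcal C^1(\mathbb R^3)$ follows from the implicit function theorem applied to $G(u,s,y,x):=M^z(u,s,y)-x=0$, whose $u$-derivative $m(M^z(u,s,y),z(u))\ge m_{min}$ never vanishes. Finally, vi) is the transport identity satisfied by $M^z$ along its characteristics, obtained by differentiating the flow identity \eqref{eq:flow} in a time variable and using \eqref{eq:mat}.

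The whole statement being classical, the plan involves no genuine difficulty; the one point deserving attention is the surjectivity half of v). It is the only place where the \emph{lower} bound $m\ge m_{min}>0$ from Assumption~\ref{assp:rm} is used in an essential way, mere boundedness and Lipschitz continuity of $m$ not being sufficient: it ensures that every characteristic, travelled in the current-time direction, has speed at least $m_{min}$ and therefore sweeps out all of $\mathbb R$, which is what makes $\kappa$ globally defined and gives the time-diffeomorphism. One should also make sure the extensions of $m$ and $z$ off $[0,1]\times[0,\infty)$ are chosen so that the $\mathcal C^1$ bounds and the bounds $m_{min}\le m\le\widehat m$ are preserved; this is harmless, since only the values of $M^z$ over the relevant ranges enter the later arguments.
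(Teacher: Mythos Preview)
Your proof is correct and follows essentially the same strategy as the paper's: both invoke Cauchy--Lipschitz theory and boundedness of $m$ for i)--iii), exploit the flow/variational structure for the monotonicity in iv)--v), and apply the implicit function theorem for the $\mathcal C^1$ regularity of $\kappa$. Your treatment of v) is in fact more complete than the paper's: the paper differentiates the flow identity to obtain $\partial_t M^z<0$ and then applies the implicit function theorem, but leaves the surjectivity of $t\mapsto M^z(s,t,x)$ implicit, whereas you make this explicit by running the forward characteristic $u\mapsto M^z(u,s,y)$, whose speed is bounded below by $m_{\min}$, and invoking reversibility---which is exactly the point where the lower bound on $m$ enters, as you note.
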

\begin{proof}
    The existence and uniqueness of point $i)$ for non-autonomous systems can be found in Chapter 15 of \cite{HirschSmale}. The composition property $ii)$ can be found in Chapter 8 of \cite{HirschSmale} for autonomous systems and extends directly to the non-autonomous case. For global existence and uniqueness and for points $iii)$ and $iv)$ in non-autonomous systems, we refer to Chapter 4, Theorem 3.3 of \citet{Mouhot} which covers our framework. We prove that the function $t \in \mathbb{R} \mapsto M^z(s, t, x)$ is a diffeomorphism (first part of $v)$) by differentiating \eqref{eq:flow} with respect to $t_2$ and taking $t_2 = t_3$:
    \begin{equation} \label{eq:flow2}
        \frac{\partial M^z}{\partial t} (s, t, x) = - \,m(M^z(s, t, x), \, z(s)),
    \end{equation} 
    and using the strict positivity of the function $m$. Continuity of $\kappa$ on $\mathbb R^3$ in point $v)$ follows from the implicit function theorem since $M^z(t, s, x) - y$ belongs to  $\mathcal{C}^1(\mathbb{R}^4, \mathbb{R})$ and $M^z(t, \kappa(s, y, x), x ) - y = 0$ and the differential of $M^z(t, s, x) - y$ with respect to $s$ is non zero due to the positivity of $m$. Finally, Theorem 3.6 in \cite{Mouhot} proves $vi)$.
\end{proof}

We complement the above properties on the flow $M$ by the following estimation, which will be useful for the proof of the uniqueness of the solution of \eqref{eq:lim}. 
\begin{prop} \label{prop:flow}
    For any $z, \hat{z} \in \mathcal{C}(\mathbb R_+)$, we have
    \begin{equation*} 
        |M^z(s, t, x) - M^{\widehat{z}}(s, t, x)| \leq \exp(L_m \, T) \int_s^t | z(u) - \widehat{z}(u)| \, du,
    \end{equation*}
    for all  $s, t \in [0, T]$ and $x \in \mathbb [0, 1]$.
    \end{prop}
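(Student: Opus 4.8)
The plan is to treat $M^z(\cdot,t,x)$ and $M^{\widehat z}(\cdot,t,x)$ as two solutions, in the time variable $s$, of the same non-autonomous ODE \eqref{eq:mat} sharing the terminal value $x$ at $s=t$, and to control their difference by a Grönwall argument. First I would reduce to the case $s\leq t$ (the case $t\leq s$ being symmetric, with the integral taken over the correctly oriented interval). Integrating \eqref{eq:mat} in the first variable from $s$ to $t$ and using $M^z(t,t,x)=x$,
$$M^z(s,t,x)=x-\int_s^t m\bigl(M^z(u,t,x),z(u)\bigr)\,du,$$
and the same identity holds with $\widehat z$. Subtracting and writing $\Delta(u):=M^z(u,t,x)-M^{\widehat z}(u,t,x)$ (so that $\Delta(t)=0$),
$$\Delta(s)=-\int_s^t\Bigl[m\bigl(M^z(u,t,x),z(u)\bigr)-m\bigl(M^{\widehat z}(u,t,x),\widehat z(u)\bigr)\Bigr]\,du.$$

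Next I would invoke the Lipschitz estimate on $m$ from Assumption \ref{assp:rm}, namely $|m(x_1,z_1)-m(x_2,z_2)|\leq L_m(|x_1-x_2|+|z_1-z_2|)$, which gives
$$|\Delta(s)|\leq L_m\int_s^t|\Delta(u)|\,du+L_m\int_s^t|z(u)-\widehat z(u)|\,du .$$
Since $u\mapsto\int_u^t|z-\widehat z|$ is non-increasing on $[s,t]$, it is bounded there by $\int_s^t|z-\widehat z|$, so the inequality above holds for every $u\in[s,t]$ with the fixed quantity $L_m\int_s^t|z-\widehat z|$ on the right. This puts us in the setting of a backward Grönwall lemma: setting $G(u):=\int_u^t|\Delta|$, one has $G'(u)=-|\Delta(u)|\geq -L_m G(u)-L_m\int_s^t|z-\widehat z|$, and multiplying by the integrating factor $e^{L_m u}$ and integrating from $s$ to $t$ with $G(t)=0$ yields $|\Delta(s)|\leq L_m\,e^{L_m(t-s)}\int_s^t|z(u)-\widehat z(u)|\,du$. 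Since $0\leq t-s\leq T$, this is bounded by $\exp(L_m T)\int_s^t|z(u)-\widehat z(u)|\,du$, up to the harmless multiplicative factor $L_m$ (which one may absorb by taking $L_m\geq1$ without loss of generality).

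The only point requiring a little care is the orientation of the Grönwall estimate: the relevant integral runs from $s$ up to the \emph{fixed} endpoint $t$ rather than from $0$ to a variable upper limit, so one must keep track of signs, or equivalently perform the change of variables $s\mapsto t-s$ to reduce to the textbook forward version. The non-differentiability of $u\mapsto|\Delta(u)|$ is not an obstacle, since the whole argument is carried out on the integral inequality and never differentiates the absolute value.
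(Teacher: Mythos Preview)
Your argument is essentially identical to the paper's: integrate \eqref{eq:mat} from $s$ to $t$, subtract, apply the Lipschitz bound on $m$, and conclude by a (backward) Gr\"onwall inequality. The paper is actually terser than you are---it stops at the integral inequality and says ``Gronwall's Lemma allows to conclude''---so your explicit handling of the backward orientation is a welcome clarification rather than a deviation.

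One minor point: your device of ``taking $L_m\geq 1$ without loss of generality'' does not literally recover the constant $\exp(L_m T)$ stated in the proposition; a straightforward Gr\"onwall gives $L_m\,e^{L_m(t-s)}$ (or $L_m\,e^{L_m T}$), and the paper's proof, applied verbatim, yields the same prefactor. This is harmless, since the only downstream use of the estimate (in the uniqueness proof of Proposition~\ref{prop:uniquenessdiff}) immediately absorbs the constant into a generic $C_T$, but you should not pretend the extra $L_m$ disappears.
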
 
\begin{proof} 
    Using (\ref{eq:mat}) and the Lipschitz continuity of the function $m$, we have 
    \begin{align*}
        |M^z(s, t, x) - M^{\widehat{z}}(s, t, x)| =& \left| \int_s^t \left(m(M^z(u, t, x), \, z(u)) - m(M^{\widehat{z}}(u, t, x), \, \widehat{z}(u))\right) \, du \right| \\
        \leq& L_m \int_s^t \left[ |M^z(u, t, x) - M^{\widehat{z}}(u, t, x)| + | z(u) - \widehat{z}(u)|  \right] \, du.
    \end{align*} 
    Gronwall's Lemma allows to conclude.
\end{proof}
We can then prove the uniqueness of the solution of the system \eqref{eq:lim}. 
\begin{prop} \label{prop:uniquenessdiff}
    Under Assumption \ref{assp:rm}, for any $T > 0$ and any $(a_0, \, \mu_0, \, z_0) \in \mathbb{R}_+ \times M_F \times \mathbb{R}_+$, for any function $f \in \mathcal{C}^1([0, 1])$, the system \eqref{eq:lim} has a unique solution $(a, \mu, z) \in \mathcal{C}([0,T], \, \mathbb{R}) \times \mathcal{C}([0,T], \, (M_F, w)) \times \mathcal{C}([0,T], \, \mathbb{R})$.
\end{prop}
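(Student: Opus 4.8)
The plan is to use the triangular structure of \eqref{eq:lim}. For a fixed continuous $z$, the first line is a linear scalar ODE, so $a$ is given explicitly by $a(t)=a_0\exp\big(\int_0^t[r(0,z(s))-m(0,z(s))]\,ds\big)$, and only $z$ and $\mu$ remain coupled. The crucial structural observation for the measure equation is that a test function $f$ with $f(1)=0$ annihilates the bracket multiplied by $f(1)$, so that for such $f$
\[
\langle\mu_t,f\rangle=f(0)\int_0^t m(0,z(s))a(s)\,ds+\langle\mu_0,f\rangle+\int_0^t\langle\mu_s,f\,r(\cdot,z(s))+f'\,m(\cdot,z(s))\rangle\,ds ;
\]
on these functions $\mu$ solves a pure transport equation with a growth term and a source at the boundary $x=0$, with no contribution at $x=1$. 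Moreover, adding the third line of \eqref{eq:lim} to the second line taken with $f\equiv1$ shows that $w(t):=z(t)+\langle\mu_t,1\rangle$ obeys the closed integral equation
\[
w(t)=z_0+\langle\mu_0,1\rangle+\int_0^t\langle\mu_s,r(\cdot,z(s))\rangle\,ds+\int_0^t m(0,z(s))a(s)\,ds-d\int_0^t z(s)\,ds ,
\]
and that, conversely, the third line is equivalent to this identity. Since $z=w-\langle\mu_\cdot,1\rangle$, I would carry the estimates with $w$ as the ``mature'' unknown.

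The next step is the mild (characteristics) reformulation of the transport part. For fixed $t$ and $f\in\mathcal C^1([0,1])$ with $f(1)=0$, extend $f$ by $0$ to $[0,\infty)$ — which keeps it Lipschitz precisely because $f(1)=0$ — and feed the time‑dependent test function $h(s,x)=f(M^z(t,s,x))$ into the weak equation: by point (vi) of Proposition \ref{prop:diff}, $h$ solves $\partial_s h+m(\cdot,z(s))\partial_x h=0$ with $h(t,\cdot)=f$, so the transport term cancels and one gets the Duhamel identity
\[
\langle\mu_t,f\rangle=\langle\mu_0,f(M^z(t,0,\cdot))\rangle+\int_0^t f(M^z(t,s,0))\,m(0,z(s))a(s)\,ds+\int_0^t\langle\mu_s,f(M^z(t,s,\cdot))\,r(\cdot,z(s))\rangle\,ds .
\]
Passing to time‑dependent test functions, and relaxing the requirement on $f$ from $\mathcal C^1$ to bounded Lipschitz, is justified by first taking $f$ smooth and vanishing near $1$ and then using that the right‑hand side above is continuous under uniform convergence of $f$. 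From this representation (which also yields the a priori bounds $\sup_{[0,T]}(a+\langle\mu_\cdot,1\rangle+z)<\infty$) one sees that, given $z$, the measure $\mu$ is the unique solution of a linear Volterra equation, hence is determined by $z$ together with $\mu_0,a_0$; this reduces uniqueness to the scalar unknowns.

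Given two solutions $(a_1,\mu_1,z_1)$ and $(a_2,\mu_2,z_2)$ with the same initial datum, I would close a Gronwall/contraction estimate on
\[
D(t):=|a_1(t)-a_2(t)|+|w_1(t)-w_2(t)|+\sup_{g\in BL_1([0,1]),\,g(1)=0}\big|\langle\mu^1_t-\mu^2_t,g\rangle\big| ,
\]
using that $|z_1(s)-z_2(s)|\le|w_1(s)-w_2(s)|+|\langle\mu^1_s-\mu^2_s,1\rangle|$. Subtracting the two mild identities (for $g$ with $g(1)=0$, so that the integrand $g(M^{z_i}(t,s,\cdot))r(\cdot,z_i(s))$ again vanishes at $1$) and the two $w$‑equations, every difference is estimated using: Proposition \ref{prop:flow}, which gives $|M^{z_1}(t,s,x)-M^{z_2}(t,s,x)|\le e^{L_mT}\int_0^t|z_1-z_2|$; the Lipschitz dependence of $m$ and $r$ on their second variable (Assumption \ref{assp:rm}); the uniform mass bounds; and points (iii)--(iv) of Proposition \ref{prop:diff}, by which $x\mapsto g(M^{z_i}(t,s,x))\,r(x,z_i(s))$ is bounded and Lipschitz with a constant depending only on $T$ and the model parameters, so that the $\mu^1_s-\mu^2_s$ pairings under the integrals are dominated by the $\sup_{g(1)=0}$ seminorm appearing in $D(s)$. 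This leads to $D(t)\le C_T\int_0^t\big(D(s)+|z_1(s)-z_2(s)|\big)\,ds$. The remaining ingredient is a bound on $\int_0^t|z_1-z_2|$, equivalently on $\int_0^t|\langle\mu^1_s-\mu^2_s,1\rangle|$: this is where the boundary $x=1$ genuinely enters, since neither $\langle\mu_t,1\rangle$ nor $z(t)$ is stable on its own. Reading the pushforward form of $\mu_t$ off the mild identity, and using that the characteristics hit the line $x=1$ transversally (because $m\ge m_{min}>0$, the crossing point depends on $z$ in a Lipschitz way), one controls $\int_0^t|\langle\mu^1_s-\mu^2_s,1\rangle|\,ds\le C_T\int_0^t D(s)\,ds$ up to a term absorbed for $T$ small. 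Iterating over subintervals of small length and applying Gronwall gives $D\equiv0$, hence $a_1=a_2$, $w_1=w_2$, $\mu^1_t=\mu^2_t$ on $[0,1)$, and — via the no‑atom condition in \eqref{eq:lim} and weak continuity in time — $\mu^1=\mu^2$ and $z_1=z_2$.

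Existence in the stated space does not require a separate argument: Proposition \ref{prop:iden} already shows that any limit point of the tight family $(X_1^N,\mu^N,X_N^N)$ lies in $\mathcal C([0,T],\mathbb R_+)\times\mathcal C([0,T],M_F)\times\mathcal C([0,T],\mathbb R_+)$ and solves \eqref{eq:lim}; alternatively a solution can be built by Picard iteration, the contraction being supplied by the very estimates above. The step I expect to be the real obstacle is precisely the boundary effect at $x=1$: the outflux of maturing cells couples $\mu_t$ near $1$ with $z(t)$, which makes a naive estimate in $\|\cdot\|_{BL}$ fail. The remedy is to work with the combined quantity $w=z+\langle\mu_\cdot,1\rangle$, to reduce the measure estimates to test functions vanishing at the boundary (where the mild formulation becomes clean, with no $x=1$ term), and to control the leftover coupling by an $L^1$‑in‑time estimate that exploits the transversality of the characteristics to the boundary.
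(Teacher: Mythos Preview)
Your core strategy---characteristics/mild formulation plus Gronwall---is the same as the paper's, and your observation that the weak equation extends to time-dependent test functions $f(M^z(t,s,\cdot))$ is exactly the pivot the paper uses. The difference lies in how you package the three unknowns. You split off $a$, introduce $w=z+\langle\mu_\cdot,1\rangle$, and restrict to test functions with $g(1)=0$ (so that the extension by $0$ to $[0,\infty)$ stays Lipschitz). The paper instead amalgamates everything into the single measure $\nu_t=a(t)\delta_0+\mu_t+z(t)\delta_1$, derives one mild equation for $\nu$ with $\tilde f$ an \emph{arbitrary} bounded Lipschitz extension of $f$ to $\mathbb R$ (no constraint at $x=1$), and runs the Gronwall in the $BL$ norm of $\nu$. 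This device makes the boundary term at $x=1$ disappear for free: the outflux through $x=1$ is simply the $z\delta_1$ piece transported by the flow, and the only ``loss'' term left in the mild formula is the death term $-d\int_0^t\tilde f(M^z(t,s,1))z(s)\,ds$.

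By contrast, your restriction $g(1)=0$ manufactures the very obstacle you then have to work around. Your closing step---controlling $\int_0^t|\langle\mu^1_s-\mu^2_s,1\rangle|\,ds$ by $\int_0^t D(s)\,ds$ via ``transversality of the characteristics to the boundary''---is not justified as written: the function $1$ is not in the class $\{g\in BL_1:\,g(1)=0\}$, and approximating it by $g_\varepsilon$ vanishing at $1$ blows up the Lipschitz constant like $\varepsilon^{-1}$, so the mild estimate does not pass to the limit. The transversality idea (Lipschitz dependence of the hitting time of $\{x=1\}$ on $z$) is plausible but would require a genuine additional argument; the paper avoids it entirely. In short: same method, but the paper's choice of $\nu$ and unconstrained Lipschitz extensions is the right simplification, and it removes precisely the step you flagged as the real obstacle.
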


\begin{proof}
    Consider the measure that gathers the three elements of the triplet $(a, \mu, z)$ given for each $t \geq 0$ by
    \begin{equation} \label{eq:nu}
        \nu_t = a(t) \delta_0 + \mu_t + z(t) \delta_1,
    \end{equation}
    where $a(t)$, $\mu_t$, $z(t)$ are defined by System (\ref{eq:lim}). \\ \, \\
    Then, for any $f \in \mathcal{C}^1([0, 1])$ and any $t \geq 0$
    \begin{align*}
        \left< \nu_t, f \right> =& \, f(0) \, a(t) + \left< \mu_t, f \right> + f(1) \, z(t) \\
        =& \, \left< \nu_0, f \right> +  \int_0^t f(0) \, r(0, z(s)) \, a(s) \, ds + \int_0^t \left< \mu_s, f \, r \left( \cdot, z(s) \right) \right> \, ds \\
        &+ \int_0^t \left< \mu_s, f' \, m( \cdot, z(s)) \right> ds - d \int_0^t f(1) \, z(s)  \, ds.
    \end{align*}
    For every $t \in \mathbb{R}_+$, we can extend the last equation by classical arguments to a test function $\phi_t(s, x) \in \mathcal{C}^1([0, t] \times [0, 1], \mathbb{R})$ as follows: 
    \begin{align}
        \left< \nu_t, \phi_t(t, \cdot) \right> =& \left< \nu_0, \phi_t(0, \cdot) \right> + \int_0^t \phi_t(s, 0) \, r(0, z(s))\  a(s) \, ds + \int_0^t \left< \mu_s, \phi_t(s, \cdot) \, r \left( \cdot, z(s) \right) \right> \, ds \nonumber\\
        &+ \int_0^t \left< \mu_s, m(\cdot, z(s)) \partial_x \phi_t(s, \cdot) \, +\partial_s \phi_t(s, \cdot)  \right> \, ds
        - d \int_0^t \phi_t(s, 1) \, z(s) \, ds.\nonumber
    \end{align} \, \\
    
   Let $f \in \mathcal{C}^1([0,1])$. We extend $f$ to a function $\widetilde f \in \mathcal{C}^1_b(\mathbb{R})$. We define the test function $\phi$ for $t \geq 0$ and $x\in [0,1]$ by  $\phi_t(s, x) = \widetilde f(M^z(t, s, x))$. Using point $vi)$ of Proposition \ref{prop:diff}, we can observe that for any $s,t\in \mathbb R$ and $x\in [0,1]$,
    \begin{align*} 
        \phi_t(t, x) = f(x), \quad m(x, z(s)) \, \partial_x \phi_t(s, x)+\partial_s \phi_t (s, x) = 0. 
    \end{align*}
    Therefore, we obtain that $(\nu_t )_t$ is solution of the mild equation given by
    \begin{align} \label{eq:nuPsi}
        \left< \nu_t, f \right>  = \left< \nu_t, \widetilde f \right> =& \, \left< \nu_0, \widetilde f(M^z(t, 0, \cdot)) \right> + \int_0^t \widetilde f(M^z(t,s,0)) \, r(0, z(s)) \, a(s) \, ds \\
        &\qquad + \int_0^t \left< \mu_s, \widetilde f(M^z(t, s, \cdot)) \, r \left( \cdot, z(s) \right) \right> \, ds - d \int_0^t \widetilde f(M^z(t, s, 1)) \, z(s) \, ds.\nonumber
    \end{align}
    
    Let us now consider $g\in BL_1([0, 1])$. Then $g$ can be extended on $\mathbb{R}$ to a bounded Lipschitz continuous function $\widetilde g\in BL_1(\mathbb{R})$. Since any function in $BL_1(\mathbb{R})$ is the limit of a sequence of $\mathcal C^1_b(\mathbb{R})$-functions for the bounded pointwise convergence, (take for example convoluted functions of $f$ by smooth mollifiers), the mild equation above is also satisfied by any function $g\in BL_1$ and its bounded Lipschitz continuous extension $\widetilde g$. Let us prove that the mild equation \eqref{eq:nuPsi} has a unique solution $(a, \mu, z)$ in $\mathcal{C}([0,T], \, \mathbb{R}) \times \mathcal{C}([0,T], \, (M_F, w)) \times \mathcal{C}([0,T], \, \mathbb{R})$ which satisfies that  a.e. in $t$, $\mu_t(\{0\})= \mu_t(\{1\})=0$. \\
    Let $(a, \mu, z)$ and $(\widehat{a}, \widehat{\mu}, \widehat{z})$ be two solutions of System (\ref{eq:lim}) and $\nu_t$, $\widehat{\nu}_t$ as defined in (\ref{eq:nu}) with $\nu_0 = \widehat{\nu}_0$. Using (\ref{eq:nuPsi}), it follows that
    \begin{align*}
        | \left< \nu_t - \widehat{\nu}_t, \widetilde g \right>| \leq& \left| \left< \nu_0,\widetilde g(M^z(t, 0, \cdot)) - \widetilde g(M^{\widehat{z}}(t, 0, \cdot) \right> \right| \\
        & + \int_0^t \left| \widetilde g(M^z(t,s,0)) \, r(0, z(s)) \, a(s) - \, \widetilde g(M^{\widehat{z}}(t,s,0)) \, r(0, \widehat{z}(s)) \, \widehat{a}(s) \right| ds \\
        &+ \int_0^t \left| \left< \mu_s, \widetilde g(M^z(t, s, \cdot)) \, r \left( \cdot, z(s) \right) \right> - \left< \widehat{\mu}_s, \widetilde g(M^{\widehat{z}}(t, s, \cdot)) \, r \left( \cdot, \widehat{z}(s) \right) \right>  \right| ds\\
        &+d \int_0^t \left| \widetilde  g(M^{z}(t, s, 1)) \, z(s) - \widetilde  g(M^{\widehat{z}}(t, s, 1)) \, \widehat{z}(s) \right| ds.
    \end{align*}
    We study individually the different terms on the right hand side. For the first term, given Assumption \ref{assp:lim}, the Lipschitz continuity of $g$ and the estimation given in Proposition \ref{prop:flow} we derive that
    \begin{align*}
        \left| \left< \nu_0,\widetilde g(M^z(t, 0, \cdot)) - \widetilde g(M^{\widehat{z}}(t, 0, \cdot) \right> \right|  \leq& \left| \left< \nu_0, 1 \right> \right| \sup_{x \in [0, 1]} \left|\widetilde  g(M^z(t, 0, x)) - \widetilde g(M^{\widehat{z}}(t, 0, x) \right| \\
        \leq& C_T \int_0^t \left| z(s) - \widehat{z}(s) \right| ds.
    \end{align*}
    Using the Lipschitz continuity of $r$ and $m$ (Assumption \ref{assp:rm}), the Lipschitz continuity of $\widetilde g$, and Proposition \ref{prop:flow}, we prove that the second term is bounded as follows
    \begin{align*}
        \left| \int_0^t \right. \widetilde g(M^z&(t,s,0)) \left. \, r(0, z(s)) \, a(s) - \,\widetilde g(M^{\widehat{z}}(t,s,0)) \, r(0, \widehat{z}(s)) \, \widehat{a}(s) ds \right| \\
        =&  \int_0^t \bigg|\, \widetilde g(M^z(t,s,0)) \, r(0, z(s)) \, (a(s) -  \widehat{a}(s) ) + \widehat{a}(s) \, r(0, z(s)) \,  \big(\widetilde g(M^z(t,s,0)) - \widetilde g(M^{\widehat{z}}(t,s,0)) \big) \\
        & + \, \widehat{a}(s) \, \widetilde g(M^{\widehat{z}}(t,s,0)) \, \big( r(0, z(s)) - r(0, \widehat{z}(s)) \big)\,\bigg| \,ds  \\
        \leq& C_T \, ||\widetilde{g}||_\infty \left[  \int_0^t  \left|a(s) -  \widehat{a}(s) \right| ds + \int_0^t \left|  z(s) - \widehat{z}(s) \right| ds \right].
    \end{align*}
    By similar arguments, we obtain
    \begin{align*}
        \left| \int_0^t\Big( \left< \right. \right. \mu_s&, \left. \left. \widetilde g(M^z(t, s, \cdot)) \, r \left( \cdot, z(s) \right) \right> - \left< \widehat{\mu}_s, \widetilde g(M^{\widehat{z}}(t, s, \cdot)) \, r \left( \cdot, \widehat{z}(s) \right) \right>\Big) \, ds \right|\\
        \leq& \int_0^t \left| \,\Big(\left< \mu_s - \widehat{\mu}_s, \widetilde g(M^z(t, s, \cdot)) \, r \left( \cdot, z(s) \right) \right> + \left< \widehat{\mu}_s, (\widetilde g(M^z(t, s, \cdot)) - \widetilde g(M^{\widehat{z}}(t, s, \cdot))) \, r \left( \cdot, z(s) \right) \right> \right. \\
        &\ \ \ \ \ \ \ \ \ \left. + \left< \widehat{\mu}_s, \widetilde g(M^{\widehat{z}}(t, s, \cdot)) \, (r \left( \cdot, z(s) \right) - r \left( \cdot, \widehat{z}(s) \right) )\right>\Big) \,\right|\, ds  \\
        \leq& C_T \, ||\widetilde{g}||_\infty \int_0^t  \left(\left|\left< \mu_s - \widehat{\mu}_s, 1 \right> \right| + \left|  z(s) - \widehat{z}(s) \right| \right) ds .
    \end{align*}
    Finally, by Lipschitz continuity of $\widetilde g$ and using Proposition \ref{prop:flow}, the last term satisfies
    \begin{align*}
        \left|  \int_0^t \left( \widetilde g(M^{z}(t, s, 1)) \, z(s) - \widetilde g(M^{\widehat{z}}(t, s, 1)) \, \widehat{z}(s)\right) \, ds \right| \leq C_T \int_0^t |z(s) - \widehat{z}(s)| ds.
    \end{align*}
    Then, by gathering all these results, we conclude that
    \begin{equation*}
        ||\nu_t - \widehat{\nu}_t||_{BL} \leq C_T \, ||\widetilde{g}||_\infty \int_0^t ||\nu_s - \widehat{\nu}_s||_{BL} \, ds.
    \end{equation*}
    Hence, by boundedness of $\widetilde{g}$, Gronwall's Lemma ensures that $||\nu_t - \widehat{\nu}_t ||_{BL} = 0$. Since for almost all $t \in [0,T]$, the measures $\mu_t $ and $\widehat \mu_t$ don't charge the two singletons $\{0\}$ and $\{1\}$, we have $a(t)=\widehat a(t)$ and $z(t)= \widehat z(t)$ for all $t$ (invoking also the continuity of the functions) and thus $\mu_t = \widehat \mu_t$ for all $t \in [0,T]$. Uniqueness of the solution of \eqref{eq:lim} is proved.
\end{proof}

We can then conclude to the convergence of the system  $(X_1^{N}, \mu^N, X_N^{N})_N$ as stated in the following theorem.

\begin{thm} \label{thm:conv}
    Under Assumptions \ref{assp:rm}-\ref{assp:lim}, for each $T > 0$, the sequence of processes $\left(  X_1^{N}, \mu^N, X_N^{N} \right)_N$ converges in probability in $\mathbb{D}([0,T], \, \mathbb{R}_+) \times \mathbb{D}([0,T], \, (M_F, w)) \times \mathbb{D}([0,T], \, \mathbb{R}_+)$ to the unique deterministic triplet $\left(a, \, \mu, \, z \right) \in \mathcal{C}([0,T], \, \mathbb{R}) \times \mathcal{C}([0,T], \, (M_F, w)) \times \mathcal{C}([0,T], \, \mathbb{R})$ satisfying \eqref{eq:lim}.
\end{thm}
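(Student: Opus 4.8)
The plan is to combine the three earlier results — tightness (Proposition \ref{prop:tight}), identification of limit points (Proposition \ref{prop:iden}), and uniqueness of the limiting system (Proposition \ref{prop:uniquenessdiff}) — via the standard compactness argument, and then upgrade the resulting convergence in law to convergence in probability using the fact that the limit is deterministic.

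First, by Proposition \ref{prop:tight}, under Assumptions \ref{assp:rm}--\ref{assp:lim} the sequence of laws of $(X_1^N, \mu^N, X_N^N)$ is tight in $\mathcal{P}(\mathbb{D}([0,T],\mathbb{R}_+) \times \mathbb{D}([0,T],(M_F,w)) \times \mathbb{D}([0,T],\mathbb{R}_+))$, so by Prokhorov's theorem it is relatively compact. Consequently every subsequence admits a further subsequence converging in law to some limit $(a,\mu,z)$. By Proposition \ref{prop:iden}, any such limit point is deterministic, has continuous paths, and satisfies System \eqref{eq:lim} together with the boundary relations $\mu_t(\{0\}) = \mu_t(\{1\}) = 0$ for a.e.\ $t$. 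By Proposition \ref{prop:uniquenessdiff}, System \eqref{eq:lim} has a unique solution $(a,\mu,z)$ in $\mathcal{C}([0,T],\mathbb{R}) \times \mathcal{C}([0,T],(M_F,w)) \times \mathcal{C}([0,T],\mathbb{R})$ for the given initial data $(a_0,\mu_0,z_0)$. Hence all subsequential limits coincide with this unique solution, and a standard argument (a relatively compact sequence all of whose subsequential limits agree converges) shows that the full sequence $(X_1^N,\mu^N,X_N^N)$ converges in law to $(a,\mu,z)$.

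It remains to improve convergence in law to convergence in probability. This is immediate because the limit is a deterministic (constant) element of the Polish space $\mathbb{D}^3$: convergence in law of a sequence of random variables to a constant is equivalent to convergence in probability to that constant. Concretely, fixing a metric $\rho$ that metrizes the product Skorokhod topology, the map $\omega \mapsto \rho(\omega,(a,\mu,z))$ is continuous and bounded on bounded sets, so $(X_1^N,\mu^N,X_N^N) \Rightarrow (a,\mu,z)$ together with determinism of the limit forces $\rho((X_1^N,\mu^N,X_N^N),(a,\mu,z)) \to 0$ in probability. This yields the statement of Theorem \ref{thm:conv}.

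The only genuinely delicate points have already been discharged in the preceding propositions: the uniform moment bounds of Proposition \ref{prop:moments} and the compartment-average bound of Proposition \ref{prop:Efin} feed both the tightness and the identification arguments, and the characteristics/mild-equation approach of Propositions \ref{prop:diff}--\ref{prop:uniquenessdiff} handles the uniqueness, which was the main obstacle because the measure-valued component carries a transport term and the naive Gronwall estimate on $\|\mu_t - \widehat\mu_t\|_{BL}$ does not close without passing to the flow $M^z$. Given those inputs, the proof of the theorem itself is a short assembly: tightness $+$ uniqueness of limit points $\Rightarrow$ convergence in law, and determinism of the limit $\Rightarrow$ convergence in probability.
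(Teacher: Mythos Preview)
Your proposal is correct and matches the paper's approach exactly: the paper does not even write out a separate proof of Theorem \ref{thm:conv}, treating it as an immediate consequence of Propositions \ref{prop:tight}, \ref{prop:iden} and \ref{prop:uniquenessdiff} via precisely the tightness--identification--uniqueness scheme you describe, together with the standard fact that convergence in law to a deterministic limit implies convergence in probability.
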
 

\section{Existence of a density} \label{sec:den}
In this section, we assume that the initial condition $\mu_0$ admits a density with respect to the Lebesgue measure on $[0, 1]$. We will prove that this property propagates over time. We introduce the following assumption.\\

\begin{assp} \label{assp:den}
    Assume that the limiting initial measure $\mu_0$ is  absolutely continuous with respect to the Lebesgue measure on $[0, 1]$.
\end{assp}

Then, the following result holds.

\begin{prop} \label{prop:den}
Under Assumptions \ref{assp:rm}-\ref{assp:lim}-\ref{assp:den}, for any $t \in [0, T]$, the measure $\mu_t$ defined in System \eqref{eq:lim} admits a density $u(t,\cdot)$ with respect to the Lebesgue measure.
\end{prop}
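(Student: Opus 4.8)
The plan is to read the density off the mild formulation \eqref{eq:nuPsi}, using the flow $M^z$ of Proposition \ref{prop:diff}. Fix $t\in(0,T]$ (the case $t=0$ is Assumption \ref{assp:den}) and evaluate \eqref{eq:nuPsi} on $f\in\mathcal{C}^1([0,1])$, choosing the extension $\widetilde f\in\mathcal{C}^1_b(\mathbb{R})$ so that $\widetilde f\equiv 0$ outside $[0,1]$. Since $m\geq m_{min}>0$, the forward flow strictly increases positions, so $M^z(t,s,1)\geq 1$ and $M^z(t,0,x)>0$ for $0\leq s<t$ and $x\in[0,1]$; hence $\widetilde f(M^z(t,s,1))$ vanishes, and a short computation using \eqref{eq:lim} and the ODE for $a$ shows that the atoms $a_0\delta_0+z_0\delta_1$ of $\nu_0$ contribute no atom to $\mu_t$. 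After these reductions \eqref{eq:nuPsi} becomes, between finite measures on $[0,1]$,
\[
  \mu_t \;=\; \underbrace{\big(M^z(t,0,\cdot)\big)_{\#}\mu_0\big|_{(0,1)}}_{\alpha^{\mathrm{in}}_t}
  \;+\; \underbrace{\int_0^t c(s)\,\delta_{M^z(t,s,0)}\,ds\,\Big|_{(0,1)}}_{\alpha^{\mathrm{bd}}_t}
  \;+\; \int_0^t \big(M^z(t,s,\cdot)\big)_{\#}\!\big(r(\cdot,z(s))\,\mu_s\big)\big|_{(0,1)}\,ds ,
\]
with $c(s)=m(0,z(s))\,a(s)$, and it suffices to show each of the three pieces is absolutely continuous.

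The first term is absolutely continuous because $x\mapsto M^z(t,0,x)$ is a $\mathcal{C}^1$-diffeomorphism (Proposition \ref{prop:diff}(iv)) with Jacobian $\partial_x M^z(t,0,\cdot)=\exp\big(\int_0^t \partial_x m(M^z(\tau,0,\cdot),z(\tau))\,d\tau\big)$ bounded above and below by positive constants on $[0,1]$ (boundedness of $\partial_x m$ for the extended $m$, continuity of $z$ on $[0,T]$); pushing the a.c. measure $\mu_0$ forward by such a map and restricting to $(0,1)$ yields an $L^1$ density. The second term is the delicate one: although $\delta_{M^z(t,s,0)}$ is singular for each fixed $s$, its time-average is not, because $s\mapsto M^z(t,s,0)$ is, by Proposition \ref{prop:diff}(v), a $\mathcal{C}^1$-diffeomorphism of $[0,t]$ onto $[0,M^z(t,0,0)]$ with inverse $\kappa(t,\cdot,0)$ and speed $|\partial_s M^z(t,s,0)|=m(0,z(s))\,\partial_x M^z(t,s,0)\geq m_{min}\,e^{-C_T}>0$ — this is exactly where the lower bound $m_{min}>0$ is used. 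The change of variables $x=M^z(t,s,0)$ then turns $\alpha^{\mathrm{bd}}_t$ into the measure on $(0,M^z(t,0,0))\cap(0,1)$ with density $x\mapsto \frac{c(\kappa(t,x,0))}{|\partial_s M^z(t,\kappa(t,x,0),0)|}$, which lies in $L^1$ (its mass equals $\int_0^t c(s)\,ds$, finite since $a(s),z(s)\leq C_T$ on $[0,T]$ by \eqref{eq:lim} and Gronwall's lemma). Thus $\alpha_t:=\alpha^{\mathrm{in}}_t+\alpha^{\mathrm{bd}}_t$ is absolutely continuous with $\sup_{t\leq T}\|\alpha_t\|_{\mathrm{TV}}\leq C_T$.

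For the remaining, implicit, term I would iterate the identity above. Written as the Volterra equation $\mu_t=\alpha_t+(\mathcal{K}\mu)_t$, with $(\mathcal{K}\eta)_t:=\int_0^t \big(M^z(t,s,\cdot)\big)_{\#}\big(r(\cdot,z(s))\,\eta_s\big)\big|_{(0,1)}\,ds$, its unique solution is the Duhamel series $\mu_t=\sum_{n\geq 0}(\mathcal{K}^n\alpha)_t$ (consistency with the solution of \eqref{eq:lim} being guaranteed by Proposition \ref{prop:uniquenessdiff}). Now $\mathcal{K}$ sends an absolutely continuous family to an absolutely continuous one: for fixed $s$ the pushforward of an a.c. measure by the diffeomorphism $M^z(t,s,\cdot)$ is a.c. with uniformly controlled density (bounded Jacobian, as above), and a time-integral of a.c. measures is a.c. by Tonelli's theorem. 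Hence each $\mathcal{K}^n\alpha$ is absolutely continuous, and the Volterra structure together with $r\leq\widehat r$ gives $\|(\mathcal{K}^n\alpha)_t\|_{\mathrm{TV}}\leq \widehat{r}^{\,n}\,t^n/n!\cdot\sup_{s\leq T}\|\alpha_s\|_{\mathrm{TV}}$, so the series converges in total variation, uniformly in $t\in[0,T]$. A total-variation limit of absolutely continuous measures is absolutely continuous, so $\mu_t\ll\mathrm{Leb}$ for every $t\in[0,T]$, with density $u(t,\cdot)$ the $L^1$-sum of the densities of the $(\mathcal{K}^n\alpha)_t$.

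The main obstacle is the second step: seeing that the continuous influx of immature cells at the left boundary $x=0$ — morally a stream of point sources swept to the right by the transport — produces an honestly absolutely continuous (in fact bounded) density, and making this precise through the diffeomorphism property of $s\mapsto M^z(t,s,0)$ and the strict lower bound $m\geq m_{min}$; the rest is bookkeeping, the only care being the choice of test functions so that the $x=1$ boundary dynamics and the atoms of $\nu_0$ separate cleanly from the interior equation.
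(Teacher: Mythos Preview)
Your approach is genuinely different from the paper's and, at the structural level, correct. The paper works with the measure $\omega_t=\mu_t+z(t)\delta_1$, writes its mild equation \eqref{eq:omega}, and then handles all three terms by explicit changes of variables --- in particular, for the implicit term $\int_0^t\langle\mu_s,\widetilde f(M^z(t,s,\cdot))\,r(\cdot,z(s))\rangle\,ds$ it performs the substitution $y=M^z(t,s,x)$ and justifies the interchange through a discrete approximation of $\mu_s$ (Lemma~\ref{lem:muk}), obtaining an explicit density $G(y,t)$. You instead recast the mild equation as a Volterra equation $\mu_t=\alpha_t+(\mathcal K\mu)_t$ and resolve it by Duhamel iteration, showing that $\mathcal K$ preserves absolute continuity and that the series converges in total variation. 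This is more abstract but avoids the somewhat delicate measure-theoretic change of variables \eqref{eq:changemeasure}; the price is that you do not get an explicit formula for the density.

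There is, however, a genuine gap in your derivation of the starting identity. You begin from \eqref{eq:nuPsi}, the mild equation for $\nu_t=a(t)\delta_0+\mu_t+z(t)\delta_1$, whose boundary source is $r(0,z(s))\,a(s)$, not $m(0,z(s))\,a(s)$, and whose initial datum contains the atom $a_0\delta_0$. Pushing this atom forward by $M^z(t,0,\cdot)$ lands it at $M^z(t,0,0)\in(0,1)$, a genuine Dirac mass inside the interval; the ``short computation using the ODE for $a$'' you allude to (write $r\,a=a'+m\,a$ and integrate by parts) does cancel $a_0\,\widetilde f(M^z(t,0,0))$ against the endpoint of the $a'$ integral, but it also produces an extra term $-\int_0^t a(s)\,\partial_s[\widetilde f(M^z(t,s,0))]\,ds$ that you do not account for. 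The clean way around this --- and what the paper does --- is to work with $\omega_t$ rather than $\nu_t$: then there is no $a_0\delta_0$ in the initial datum, the boundary coefficient is $m(0,z(s))\,a(s)$ from the outset, and (because $M^z(t,s,1)\ge 1$ and $\widetilde f\equiv 0$ on $[1,\infty)$) all contributions from the right boundary vanish. With that correction, your Volterra equation and the subsequent iteration go through as written. A minor related point: requiring $\widetilde f\in\mathcal C^1_b(\mathbb R)$ with $\widetilde f\equiv 0$ outside $[0,1]$ forces $f(0)=f(1)=f'(0)=f'(1)=0$, so you are effectively testing only against $f\in\mathcal C^1_c((0,1))$; this is enough to identify $\mu_t|_{(0,1)}$, but you should say so, and then invoke $\mu_t(\{0\})=\mu_t(\{1\})=0$ separately.
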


\begin{proof}
Consider the measure that gathers the immature and mature cells given by
\begin{equation*}
    \omega_t = \mu_t + z(t) \delta_1.
\end{equation*}
Then, using \eqref{eq:lim} we have that for any function $f \in \mathcal{C}^1([0,1])$
\begin{align*}
    \left< \omega_t, f \right> =& \, \left< \omega_0, f \right> +  \int_0^t f(0) \, m(0, z(s)) \, a(s) \, ds + \int_0^t \left< \mu_s, f \, r \left( \cdot, z(s) \right) \right> \, ds \\
    &+ \int_0^t \left< \mu_s, f' \, m( \cdot, z(s)) \right> ds - d \int_0^t f(1) \, z(s)  \, ds.
\end{align*}
Similarly as for \eqref{eq:nuPsi}, we consider the flow \eqref{eq:mat} and study the measure $\omega_t$ with the test function $\phi(s, x) = \widetilde{f} (M^z(s, t, x))$ where $\widetilde{f}$ is an extension of $f$ in $\mathcal{C}_b^1(\mathbb{R})$. Then, we obtain that $\omega_t$ is a solution of
\begin{align} \label{eq:omega}
    \left< \omega_t, f \right>  = \left< \omega_t, \widetilde f \right> =& \, \left< \omega_0, \widetilde f(M^z(t, 0, \cdot)) \right> + \int_0^t \widetilde f(M^z(t,s,0)) \, m(0, z(s)) \, a(s) \, ds \\
    &\qquad + \int_0^t \left< \mu_s, \widetilde f(M^z(t, s, \cdot)) \, r \left( \cdot, z(s) \right) \right> \, ds - d \int_0^t \widetilde f(M^z(t, s, 1)) \, z(s) \, ds.\nonumber
\end{align}
We study individually the terms in the right hand side. 

Assumption \ref{assp:den} implies the existence of a measurable function $u_0$ such that $\mu_0(dx) = u_0(x) dx$. We use now that $x \in (0,1)\rightarrow M^z(t, 0, x) \in (M^z(t, 0, 0), M^z(t, 0, 1))$ is a $C^1$ diffeomorphism (for any fixed $t$) with inverse function denoted by $h(t, \cdot)$, see Proposition \ref{prop:diff}.  We deduce that for each $t\in[0,T]$,
\begin{align*}
    \left< \mu_0, \widetilde f(M^z(t, 0, \cdot)) \right> &= \int_{(0, 1)} u_0(x) \, \widetilde{f}(M^z(t, 0, x)) \, dx \\
    &= \int_{(M^z(t, 0, 0), M^z(t, 0, 1))} u_0(h(t,y)) \, \widetilde{f}(y) \, \frac{\partial}{\partial y}  h(t, y) \, dy.
\end{align*}

Moreover
\begin{align*}
    \int_0^t \widetilde f(M^z(t, s, 0)) \, m(0, z(s)) \, a(s) \, ds  = -\int_0^{M^z(t, 0, 0)} \widetilde f(y) \, m(0, z(\tau(t,y))) \, a(\tau(t,y)) \, \frac{\partial}{\partial y}  \tau (t,y) dy
\end{align*}
where 
$s\rightarrow M^z(t,s,0)
$ is a $C^1$-diffeomorphism from $ (0,t)$ into its image by Proposition \ref{prop:diff} and $\tau(t,.)$ denotes its inverse function. Notice that the bounds of the integration interval have been exchanged since for any $t, x \in \mathbb{R}$, the application $s \mapsto M^z(t, s, x)$ is decreasing (by \eqref{eq:flow2} since $m$ is positive). \\

We now justify a change of variables for the term $ \int_0^t \left< \mu_s, \widetilde f(M^z(t, s, \cdot)) \, r \left( \cdot, z(s) \right) \right> \, ds$. 
We have defined in Proposition \ref{prop:diff}  the inverse function $y \rightarrow \kappa(t,x,y)$ of  $s \rightarrow M^z(t,s,x)$, and $\kappa$ belongs to $\mathcal{C}^1(\mathbb{R}^3,\mathbb{R})$. Note also  that for any $(t, x) \in [0, T] \times [0, 1]$ and any $y \in [x, M^z(t,s,x)]$, we have $\kappa(t,y,x) \in [0, t]$.\\
  
We will use indifferently the notations $\mu_s(dx)$ or $\mu(s,dx)$. For a fixed $t \in [0, T]$, we define  $\widetilde{\mu}$ by
\begin{equation*}
    \widetilde{\mu}(y, dx) = \mu(\kappa(t,y,x), dx).
\end{equation*}
We will  now  show that
\begin{align}
\label{eq:changemeasure}
    \int_0^t \int_0^1 & \widetilde{f}(M^z(t, s, x)) \, r \left( x, z(s) \right) \, \mu(s, dx) \, ds \nonumber \\
    &= -\int_\mathbb{R} \int_0^1 \mathbb{1}_{x \leq y \leq M^z(t, 0, x)} \widetilde f(y) \, r \left( x, z(\kappa(t, y, x)) \right) \, \frac{\partial}{\partial y} \kappa(t, y, x) \, \widetilde{\mu}(y, dx) \, dy.
\end{align}
In that aim, we will use approximating discrete measures. By Lemma \ref{lem:muk} in Appendix, given that the functions $f$, $M^z$, $r$, $z$ are all continuous and the composition of continuous functions remains continuous, it follows that
\begin{align*}
    \int_0^t \int_0^1 \widetilde f(M^z(t, s, x)) \, r \left( x, z(s) \right) \, \mu(s, dx) \, ds = \lim_{k \rightarrow \infty} \int_0^t \int_0^1 \widetilde f(M^z(t, s, x)) \, r \left( x, z(s) \right) \, \mu^k(s, dx) \, ds
\end{align*}
where $\mu^k$ is defined as in Appendix \eqref{def:muk} by $$
        \mu^k(s, dx) = \sum_{i \in \mathbb{Z}} \mu\left( s, \left( \frac{i - 1}{k}, \frac{i}{k} \right] \right) \delta_{\frac{i}{k}}.
   $$
 For a fixed $k \in \mathbb{N}$, for each $0 \leq i \leq k$ and $x = i/k$, we consider the change of variable $y = M^z(t,s,x)$ $(s=\kappa(t,x,y))$ to obtain
\begin{align*}
    \int_0^t &\widetilde f(M^z(t, s, x)) \, r \left( x, z(s) \right) \, \mu\left(s, \left( x - \frac{1}{k}, x\right] \right) \, ds \\
    &= - \int_\mathbb{R}  \mathbb{1}_{x \leq y \leq M^z(t, 0, x)} \widetilde f(y) \, r \left( x, z(\kappa(t, y, x)) \right) \, \frac{\partial}{\partial y} \kappa(t, y, x) \, \mu\left(\kappa(t, y, x), \left( x - \frac{1}{k}, x\right] \right) \, dy.
\end{align*}
Then, if we sum over all $0 \leq i \leq k$, it follows that
\begin{align*}
    \int_0^t \int_0^1 &\widetilde f(M^z(t, s, x)) \, r \left( x, z(s) \right) \, \mu^k(s, dx) \, ds \\
    &= - \int_\mathbb{R} \int_0^1 \mathbb{1}_{x \leq y \leq M^z(t, 0, x)} \widetilde f(y) \, r \left( x, z(\kappa(t, y, x)) \right) \, \frac{\partial}{\partial y} \kappa(t, y, x) \, \widetilde{\mu}^k(y, dx) \, dy.
\end{align*}
Then, by continuity of $\kappa(t, y, x)$ with respect to $(y, x)$  (Proposition \ref{prop:diff} $v)$), Lemma \ref{lem:muk} in Appendix yields that
\begin{align*} 
    \int_0^t \int_0^1 & \widetilde{f}(M^z(t, s, x)) \, r \left( x, z(s) \right) \, \mu(s, dx) \, ds \\
    =& \lim_{k \rightarrow \infty} \int_0^t \int_0^1 \widetilde f(M^z(t, s, x)) \, r \left( x, z(s) \right) \, \mu^k(s, dx) \, ds \\
    &= -\lim_{k \rightarrow \infty} \int_\mathbb{R} \int_0^1 \mathbb{1}_{x \leq y \leq M^z(t, 0, x)} \widetilde f(y) \, r \left( x, z(\kappa(t, y, x)) \right) \, \frac{\partial}{\partial y} \kappa(t, y, x) \, \widetilde{\mu}^k(y, dx) \, dy \\
    &= -\int_\mathbb{R} \int_0^1 \mathbb{1}_{x \leq y \leq M^z(t, 0, x)} \widetilde f(y) \, r \left( x, z(\kappa(t, y, x)) \right) \, \frac{\partial}{\partial y} \kappa(t, y, x) \, \widetilde{\mu}(y, dx) \, dy.
\end{align*}

We gather the results of each term of \eqref{eq:omega} to conclude that
\begin{align*}
     \int_0^1 \widetilde{f}(x)\, \omega_t(dx) = f(M^z(t, 0, 1)) z_0 + \int_\mathbb{R} \widetilde{f}(y)\, G(y, t)\, dy,
\end{align*}
where
\begin{align*}
    G(y, t) =& \mathbb{1}_{M^z(t, 0, 0) < y < M^z(t, 0, 1))} \, u_0(h(t,y)) \, \frac{\partial}{\partial y}  h(t, y) - \mathbb{1}_{0 \leq y \leq M^z(t, 0, 0)} \, m(0, z(\tau(t,y)) \, a(\tau(t,y)) \, \frac{\partial}{\partial y}  \tau (t,y) \\&- \int_0^1 \mathbb{1}_{x \leq y \leq M^z(t, 0, x)}  r \left( x, z(\kappa(t, y, x)) \right) \, \frac{\partial}{\partial y} \kappa(t, y, x) \, \widetilde{\mu}(y, dx).
\end{align*}

Let $f \in \mathcal{C}([0, 1])$ be a function such that $f(1) = 0$. We extend it to a function $\widetilde{f} \in \mathcal{C}_b^1(\mathbb{R})$ such that $\widetilde{f}(x) = 0$ for any $x \in [1, \infty)$. Notice that $M^z(0, t, 1) \geq 1$ since $M^z(0, 0, 1) = 1$ and by definition in \eqref{eq:mat}, it is strictly increasing with respect to $t$ since $m$ is positive by Assumption \ref{assp:rm}. Then, we obtain
\begin{equation*}
    \int_0^1 \widetilde{f}(x)\, \nu_t(dx) = \int_0^1 \widetilde{f}(x)\, \mu_t(dx) = \int_\mathbb{R} \widetilde{f}(y)\, G(y, t)\, dy.
\end{equation*}
Thus, we conclude that for every $t \in [0, T]$, $\mu_t(dx)$ admits  on $[0,1]$ the density $u(t,x)=G(x,t)$.
\end{proof}

Let us now prove that the triplet $(a, u, z)$  is  weak solution of a partial differential equations system (PDES).

\begin{prop}
    Under Assumptions \ref{assp:rm}-\ref{assp:lim}-\ref{assp:den}, the function $(a, u, z)$ is weak solution of the PDES given by
    \begin{subequations} \label{eq:pde}    \begin{alignat}{3} 
            \frac{d}{dt}a(t) &= \, \left[r \left(0, z(t) \right) - m \left(0, z(t) \right) \right]  a(t) \\
            \partial_t u(x,t) + \partial_x \left[ m(x, z(t)) \, u(x,t) \right] &= \,r(x, z(t)) \, u(x, t) \\
            \frac{d}{dt}z(t) &= \, m(1, z(t)) \, u(1, t) - d z(t), 
        \end{alignat}
    \end{subequations}
 for   $t\geq 0$ and $x\in [0,1]$,   with boundary conditions
    \begin{align*}
        &u(0, t) =  \, a(t) \\
        &u(1, t) = \, \frac{1}{m(1,z(t))} \left[ \int_0^1 \frac{d}{dt}u(x, t) \, dx - \int_0^1 r\left(x, z(t) \right) \, u(x, t) \, dx - m\left(0, z(t) \right) a(t) \right],
    \end{align*}
    and initial conditions $(a_0, \, z_0) \in (\mathbb{R}_+)^2$.
\end{prop}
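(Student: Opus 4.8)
The starting point is Theorem \ref{thm:conv}, which tells us that $(a,\mu,z)$ solves the limiting system \eqref{eq:lim}, together with Proposition \ref{prop:den}, which lets us write $\mu_t(dx)=u(x,t)\,dx$ for every $t\in[0,T]$. The plan is to insert this density into \eqref{eq:lim}, differentiate each of the three integral equations in time, and integrate by parts in space against $\mathcal C^1$ test functions; the three equations of \eqref{eq:pde} and the two boundary relations then drop out term by term. Here ``weak solution'' means that the resulting identities hold against every test function $f\in\mathcal C^1([0,1])$, and, wherever a pointwise time derivative is written, for a.e.\ $t\in[0,T]$.

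First, since $z\in\mathcal C([0,T],\mathbb R_+)$, the integrand of the $a$-line of \eqref{eq:lim} is continuous in time, hence $a\in\mathcal C^1([0,T])$ and differentiation gives the first equation of \eqref{eq:pde}. For the transport equation I would test the $\mu$-line of \eqref{eq:lim} against $f\in\mathcal C^1([0,1])$ with $f(1)=0$: in this case the only potentially non-$\mathcal C^1$ term, $f(1)\langle\mu_t,1\rangle$, has vanishing coefficient, every remaining term is a $\mathcal C^1$ function of $t$, and differentiation yields
\begin{equation*}
    \frac{d}{dt}\int_0^1 f(x)\,u(x,t)\,dx = f(0)\,m(0,z(t))\,a(t) + \int_0^1 f(x)\,r(x,z(t))\,u(x,t)\,dx + \int_0^1 f'(x)\,m(x,z(t))\,u(x,t)\,dx .
\end{equation*}
Writing the last integral, after an integration by parts, as $-\int_0^1 f\,\partial_x\!\big(m(\cdot,z(t))\,u\big)\,dx - f(0)\,m(0,z(t))\,u(0,t)$ (the endpoint term at $x=1$ vanishes since $f(1)=0$) turns this into
\begin{equation*}
    \int_0^1 f(x)\,\big(\partial_t u + \partial_x\!\big(m(\cdot,z(t))\,u\big) - r(\cdot,z(t))\,u\big)\,dx = f(0)\,m(0,z(t))\,\big(a(t)-u(0,t)\big).
\end{equation*}
Since $f$ is arbitrary among $\mathcal C^1$ functions with $f(1)=0$, one reads off at once the transport equation (the second line of \eqref{eq:pde}) in weak form and the inflow condition $u(0,t)=a(t)$.

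For the mature population I would integrate the transport equation over $x\in[0,1]$, i.e.\ use the mass balance
\begin{equation*}
    \frac{d}{dt}\int_0^1 u(x,t)\,dx + m(1,z(t))\,u(1,t) - m(0,z(t))\,a(t) = \int_0^1 r(x,z(t))\,u(x,t)\,dx ,
\end{equation*}
which isolates the outflow value $u(1,t)$ and thereby provides the boundary relation at $x=1$ in the statement. Substituting this $u(1,t)$ into the ansatz $z'(t)=m(1,z(t))\,u(1,t)-d\,z(t)$ and comparing with the time derivative of the $z$-line of \eqref{eq:lim}, which reads $z'(t)=-\frac{d}{dt}\int_0^1 u(x,t)\,dx+m(0,z(t))\,a(t)+\int_0^1 r(x,z(t))\,u(x,t)\,dx-d\,z(t)$, shows the two agree, so the third equation of \eqref{eq:pde} holds; the initial data $a(0)=a_0$, $z(0)=z_0$ are those carried by \eqref{eq:lim}.

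The genuinely delicate point is the time regularity that legitimises the pointwise differentiations above, together with the meaning of the traces $u(0,t)$ and $u(1,t)$, since a priori $u(t,\cdot)$ is only $L^1$ on $[0,1]$. I would settle both using the explicit representation $u(t,x)=G(x,t)$ established in the proof of Proposition \ref{prop:den}: as $G$ is assembled from the $\mathcal C^1$ flow $M^z$ of \eqref{eq:mat} and its $\mathcal C^1$ inverses $h,\tau,\kappa$ (Proposition \ref{prop:diff}), the one-sided limits $u(0^+,t)$ and $u(1^-,t)$ exist, and the change of variables already performed for \eqref{eq:omega} yields $u(0^+,t)=a(t)$ directly, while $t\mapsto\int_0^1 u(x,t)\,dx$ is absolutely continuous on $[0,T]$, with a.e.\ time derivative equal to the expression used above (and everywhere if $u_0$ happens to be continuous). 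This regularity step is the part I expect to cost the most work; once it is in place, the rest is the bookkeeping sketched here.
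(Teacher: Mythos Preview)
Your proposal is correct and follows essentially the same route as the paper: substitute the density $u$ into the weak system \eqref{eq:lim} and integrate by parts in $x$ to extract the transport PDE and the two boundary conditions. The only organisational difference is that the paper keeps a general $f\in\mathcal C^1([0,1])$ throughout and reads off both boundary relations simultaneously from the coefficients of $f(0)$ and $f(1)$ (then uses $m\geq m_{min}>0$ to pass from the integral identity at $x=0$ to $u(0,t)=a(t)$), whereas you first restrict to $f(1)=0$ and recover the $x=1$ condition afterwards from a separate mass balance; your closing paragraph on time regularity and the traces via the explicit formula $G$ from Proposition~\ref{prop:den} is more careful than the paper, which treats that step formally.
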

\begin{proof}
By writing  $\mu_t(dx) = u(x, t) \, dx$ in System  (\ref{eq:lim}) and applying integration by parts to the term involving $f'$, it follows that
\begin{align*} 
   & \int_0^1 f(x) \, u(x, t) \, dx \\
    &  = \, f(0) \int_0^t m\left(0, z(s) \right) a(s) \, ds + \int_0^1 f(x) \, u(x, 0) \, dx + \int_0^t \int_0^1 f(x) \, r\left(x, z(s) \right) \, u(x, s) \, ds \\
    &+ \int_0^t \int_0^1 f'(x) \, m\left(x, z(s) \right) u(x, s) \, ds \\
    &- f(1) \left[ \int_0^1 u(x, 0) \, dx - \int_0^1 u(x, t) \, dx + \int_0^t \int_0^1 r\left(x, z(s) \right) \, u(x, s) \, ds +  \int_0^t m\left(0, z(s) \right) a(s) \, ds  \right] \\
    & = \, \int_0^1 f(x) \, u(x, 0) \, dx + \int_0^t \int_0^1 f(x) \, r\left(x, z(s) \right) \, u(x, s) \, ds - \int_0^t \int_0^1 f(x) \, \partial_x \left( m\left(x, z(s) \right) u(x, s) \right) \, ds \\
    &+ f(0) \left[ \int_0^t m\left(0, z(s) \right) a(s) \, ds -\int_0^t  m\left(0, z(s) \right) u(0, s) \, ds \right]\\
    &- f(1) \left[ \int_0^1 u(x, 0) \, dx - \int_0^1 u(x, t) \, dx + \int_0^t \int_0^1 r\left(x, z(s) \right) \, u(x, s) \, ds +  \int_0^t m\left(0, z(s) \right) a(s) \, ds \right. \\
    & \ \ \ \ \ \ \ \ \ \ \ \left. + \int_0^t m\left(1, z(s) \right) u(1, s)  \, ds \right]. \\
\end{align*}
In order to obtain a PDE we need to set the following boundary conditions for any $t \in [0, T]$
\begin{align*}
    \int_0^t m\left(0, z(s) \right) a(s) \, ds &= \int_0^t  m\left(0, z(s) \right) u(0, s) \, ds \\
    \int_0^t m\left(1, z(s) \right) u(1, s)  \, ds &= -\int_0^1 u(x, 0) \, dx + \int_0^1 u(x, t) \, dx\\
    &\qquad - \int_0^t \int_0^1 r\left(x, z(s) \right) \, u(x, s) \, ds - \int_0^t m\left(0, z(s) \right) a(s) \, ds.
\end{align*}
Then, we obtain the equation for $u(x, t)$ given by
\begin{equation*}
    \partial_t u(x,t) + \partial_x \left[ m(x, z(t)) \, u(x,t) \right] = r(x, z(t)) \, u(x, t).
\end{equation*}\\
Finally, using the fact that by assumption $0 < m_{min} \leq m(x, s)$ for any $(x, s) \in [0, 1] \times \mathbb{R}^+$,  we have that
\begin{align*}
    &\int_0^t m\left(0, z(s) \right) a(s) \, ds = \int_0^t  m\left(0, z(s) \right) u(0, s) \, ds \\
    \implies& \int_0^t m\left(0, z(s) \right) (a(s) - u(0, s)) \, ds = 0 \\
    \implies& m\left(0, z(s) \right) (a(s) - u(0, s)) = 0, \quad \forall s \leq t, \\
    \implies&  a(s) - u(0, s) = 0, \quad \forall s \leq t.
\end{align*}
This ends the proof. 
\end{proof}
\ \\ 
\textbf{Remark}: This system is equivalent to the one found by \cite{Doumic}, and our result proves as corollary the uniqueness of the system obtained in \cite{Doumic}, and subsequently to conclude a convergence result. Indeed, their limiting system writes
\begin{align} \label{eq:marie}
    \frac{d}{dt}a(t) &= [2 \alpha(z(t)) - 1] \, r(0, z(t)) \, a(t) \nonumber \\
    \partial_t u(x, t) + \partial_x \left[ m(x, z(t)) \, u(x, t) \right] &= r(x, z(t)) \, u(x, t) \\
    m(0, z(t)) \, u(0, t) &= 2[1 - \alpha(z(t))] \, r(0, z(t)) \, a(t) \nonumber \\
    \frac{d}{dt} z(t) &= m(1, z(t)) \, u(1, t) - d \, z(t), \nonumber
\end{align}
where $u(x, t)$ corresponds to the density of cells of maturation stage $x$ at time $t$ and $\alpha(z(t))$ is the fraction of progeny that stays at stem cell level. The term $[2 \alpha(z(t)) - 1] \, r(0, z(t))$ gathers all the changes in size in the stem cell population, which is equivalent in our case to $r(0, z(t)) - m(0, z(t))$. The term $2[1 - \alpha(z(t))] \, r(0, z(t))$ corresponds to the differentiation rate of the stem cell population, which we consider as $m(0, z(t))$. Finally, the weak formulation of the PDE \eqref{eq:marie} in the system allows us to find the boundary condition for $m(1, z(t)) \, u(1, t)$. For a function $f \in \mathcal{C}^1([0, 1])$ we have that
\begin{align*}
    \int_0^1 u(x, t) \, f(x) \, dx =&  \int_0^1 u(x, 0) \, dx - \int_0^t \int_0^1 \partial_x \left[ m(x, z(s)) \, u(x, s) \right] \, f(x) \, dx \, ds \\
    &+ \int_0^t \int_0^1 r(x, z(s)) \, u(x, s) \, f(x) \, dx \, ds \\
    =& \int_0^t f(0) \, m(0, z(s)) \, u(0, s) - f(1) \, m(1, z(s)) \, u(1, s) ds \\
    &+ \int_0^t \int_0^1  m(x, z(s)) \, u(x, s) \, f'(x) \, dx \, ds + \int_0^t \int_0^1 r(x, z(s)) \, u(x, s) \, f(x) \, dx \, ds.
\end{align*}
Then,
\begin{align*}
    \int_0^1 f(1) \, m(1, z(s)) \, u(1, s) ds =& \int_0^1 u(x, 0) \, dx - \int_0^1 u(x, t) \, f(x) \, dx + \int_0^t f(0) \, m(0, z(s)) \, u(0, s) ds \\
    &+ \int_0^t \int_0^1  m(x, z(s)) \, u(x, s) \, f'(x) \, dx \, ds + \int_0^t \int_0^1 r(x, z(s)) \, u(x, s) \, f(x) \, dx \, ds.
\end{align*}
For $f\equiv  1$, we obtain
\begin{align*}
    \int_0^1 m(1, z(s)) \, u(1, s) ds =& \int_0^1 u(x, 0) \, dx - \int_0^1 u(x, t) \, dx + \int_0^t m(0, z(s)) \, u(0, s) ds \\
    & + \int_0^t \int_0^1 r(x, z(s)) \, u(x, s) \, dx \, ds,
\end{align*}

\section{Numerical simulation} \label{sec:sim}
We performed numerical simulations for both the initial System \eqref{eq:stoch} with different numbers of compartments ($N = 50, 200$) and the limiting System \eqref{eq:pde} under the existence of a density. The stochastic system is simulated by classical acceptance-rejection procedure following the stochastic differential system \eqref{eq:stoch} where time has been discretized.
For the PDE part of the deterministic system, the numerical scheme is based on the following rules: the derivatives are approximated using finite backward differences, the PDE is approximated using an upwind method as seen in page 73 of \cite{Leveque} and the integrals are approximated using Riemann sums.

Our aim in these simulations is to illustrate the dynamics of the amplification through time as the maturity level increases. To do so, we started with an initial condition with only very few HSCs. Given the lack of available data on the differentiation and division rates, we considered arbitrary constant differentiation and proliferation functions $m$ and $r$. We then set $m = 0.02$, $r = 0.015$ and $d = 0.005$ for the simulations and consider an initial condition of $50$ stem cells.

In Figure \ref{fig:disc} and Figure \ref{fig:pde} we observe the distribution through time of immature cells depending on the maturation level for both systems. Figure \ref{fig:pde} also shows the dynamics of the mature cells population. Figure \ref{fig:disc} illustrates the average of $50$ simulations of the discrete system with $N = 50$ and $N = 200$ compartments while Figure \ref{fig:pde} illustrates the deterministic system. In both figures, we observe a propagation front that progressively fills all maturation stages and we observe an amplification in the number of cells as the maturation level grows, which is coherent with biological observations. The dynamics of the mature cells population in Figure \ref{fig:pde} further exemplifies the amplification as the number of mature cells is much higher than the one of stem cells.

\begin{figure}[h]
    \centering
    \includegraphics[width = \textwidth]{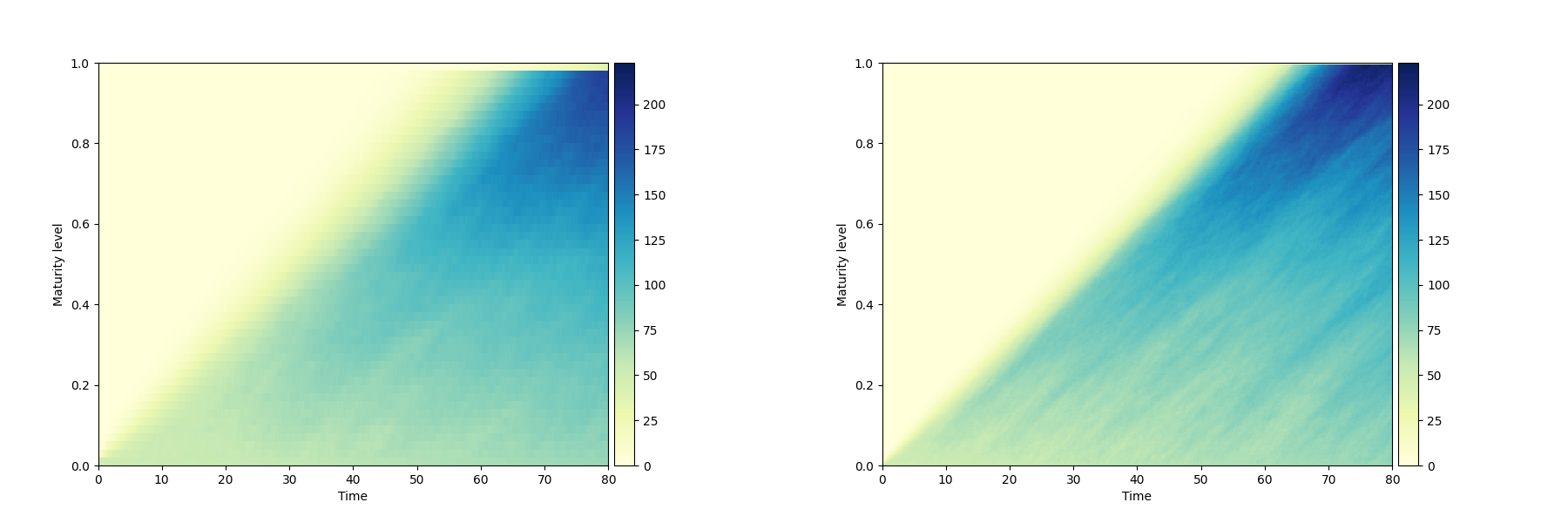}
    \caption{Dynamics of immature cells populations by their maturity level for System \eqref{eq:disc} for $50$ (left) and $200$ (right) compartments.}
     \label{fig:disc}
\end{figure}

\begin{figure}[h]
    \centering
    \includegraphics[width = \textwidth]{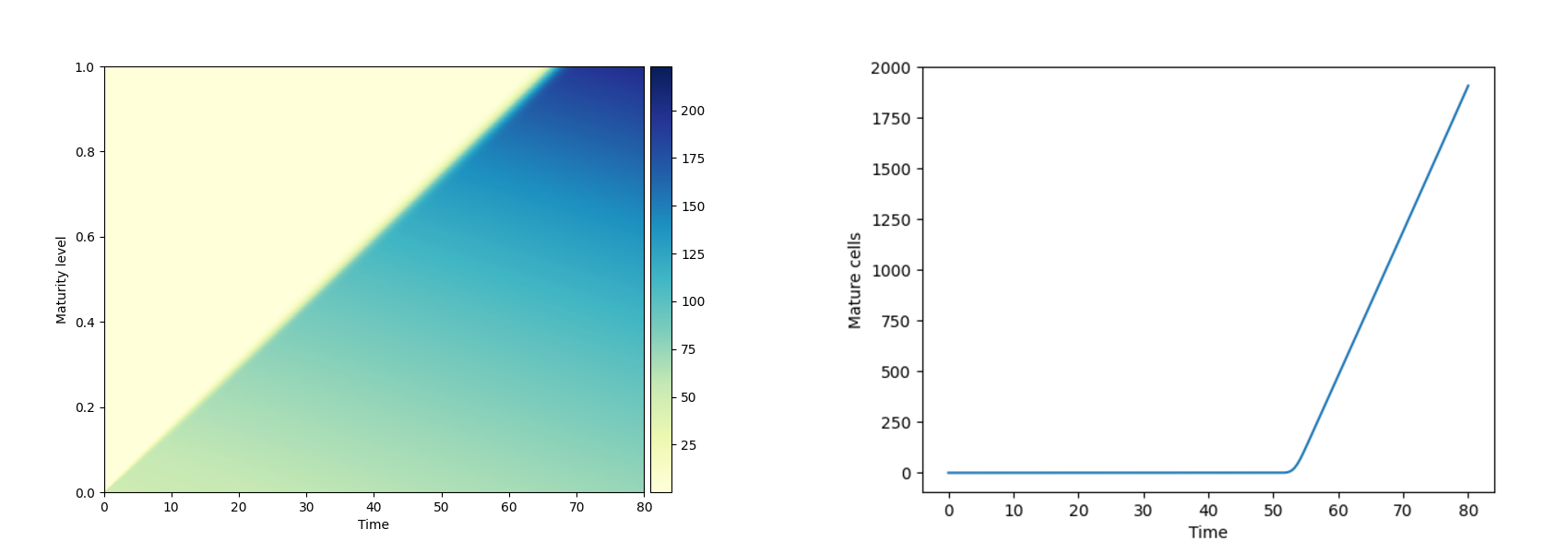}
    \caption{Left: dynamics of immature cells populations by their maturity level for System \eqref{eq:pde}. Right: dynamics of the mature cells population size for System \eqref{eq:pde}.}
    \label{fig:pde}
\end{figure}

\section{Discussion}
In this paper, we presented a stochastic compartmental differential system with continuous time to model hematopoiesis at steady-state with a regulation loop depending only on the number of mature cells. As the goal was to create a continuum in maturity level, we proposed a model where differentiation and division are two separate events, which differs with standard modeling (see, for example, \cite{Bonnet}, \cite{Dingli}, \cite{Knauer}, \cite{MonPere} or \cite{Stiehl}) where cells can only differentiate during a division. Note that in our model, cells differentiate to the immediate next maturation stage which corresponds to the biological context of steady-state hematopoiesis. Our model cannot represent two situations where jumps in differentiation stages have been observed: 1) megakaryocytes can emerge directly from LT-HSCs with a very few number of intermediary steps; 2) in stress erythropoiesis, ST-HSC can directly jump into stress erythroid progenitors \citet{Harandi}.

We proposed a rigorous derivation of a deterministic measure-valued system with two boundary ODEs dynamics and an intermediary PDE (defined in a weak sense) from the rescaled stochastic compartmental system. The scaling parameter captures both the large order of magnitude of the number of HSCs and the number of compartments. The limiting system is a first step to approximate the dynamics of hematopoiesis amplification in a continuum. This model could be extended to include more accurately different biological considerations such as stress or study the lineages separately. 

The convergence result we obtained arises from a slow-fast dynamics, with slow division events, slow death events for the last compartment, and fast differentiation. That explains the structure of the limit where the extremal compartments play a specific role: the first one is only filled by (slow) self-renewal and the last one is only emptied by (slow) death. On the other hand, the fast differentiation dynamics in the intermediary compartments allowed to derive the limiting PDE. The topic of a future work will be to study the associated fluctuations and deduce the speed of convergence. \\ \ \\ \ \\

\noindent\textbf{Statements and Declarations:} The authors have no relevant financial or non-financial interests to disclose. \\
\noindent\textbf{Fundings}: Funded by the European Union (ERC, SINGER, n°101054787). Views and opinions expressed are however those of the author(s) only and do not necessarily reflect those of the European Union or the European Research Council Executive Agency. Neither the European Union nor the granting authority can be held responsible for them. This work has also been supported by ITMO Cancer and by the Chair "Mod\'elisation Math\'ematique et Biodiversit\'e" of Veolia Environnement-Ecole Polytechnique-Museum National d'Histoire Naturelle-Fondation X.\\

\bibliographystyle{plainnat}
\bibliography{sn-bibliography}

\appendix

\section{Appendix}

\begin{appxlem} \label{lem:mart}
    For any $N > 0$, the martingales $(M_j^N(t), 1 \leq j \leq N)_{t \geq 0}$ are square-integrable and their quadratic variations are given by
    \begin{align*}
        \left< M_1^N \right>_t =& \, \frac{1}{N} \, \int_0^t \left[r \left( \frac{1}{N}, \, X_N^N(s) \right) + m\left( \frac{1}{N}, \, X_N^N(s) \right) \right] X_1^N(s) \, ds \\
        \left< M_i^N \right>_t =& \, \int_0^t \left[ r\left( \frac{i}{N}, \, X_N^N(s) \right) + N \, m\left( \frac{i}{N}, \, X_N^N(s) \right) \right] X_i^N(s) \, ds \\
        & + \int_0^t N \, m\left( \frac{i - 1}{N}, \, X_N^N(s) \right) X_{i - 1}^N(s) \, ds, \quad 2 \leq i \leq N - 1, \\
        \left< M_N^N \right>_t =& \, \frac{1}{N} \, \int_0^t \left[m\left( \frac{N - 1}{N}, \, X_N^N(s)\right) X_{N - 1}^N (s) + d \, X_N^N(s)\right] \, ds, \\
        \left< M_1^N, M_2^N \right>_t =& - \int_0^t m \left( \frac{1}{N}, \, X_N^N(s)\right) X_1^N(s) \, ds, \\
        \left< M_i^N, M_{i + 1}^N \right>_t =& - \int_0^t N \, m\left( \frac{i}{N}, \, X_N^N(s)\right) X_i^N(s) \, ds, \quad 2 \leq i \leq N - 2,\\
        \left< M_{N - 1}^N, M_N^N \right>_t =& - \int_0^t m\left( \frac{N - 1}{N}, \, X_N^N(s)\right) X_{N - 1}^N(s)\ ds, \\
        \left< M_i^N, M_j^N \right>_t =& \, 0 \quad \text{ otherwise.}
    \end{align*}
\end{appxlem}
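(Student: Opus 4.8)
The plan is to read off the martingale parts $M_j^N$ directly from the Poisson representation \eqref{eq:stoch}--\eqref{eq:stoch3} and then apply the standard bracket calculus for stochastic integrals against compensated Poisson point measures. Writing $\widetilde{\mathcal N}_i^j$ for the compensated version of $\mathcal N_i^j$ (deterministic compensator $ds\,d\theta$), compensating each indicator-driven integral in \eqref{eq:stoch}--\eqref{eq:stoch3} yields that $M_1^N$ is $\tfrac1N$ times the difference of a $\widetilde{\mathcal N}_1^r$-integral and a $\widetilde{\mathcal N}_1^m$-integral with integrands $\mathbb 1_{\theta\le N r(1/N,X_N^N(s^-))X_1^N(s^-)}$ and $\mathbb 1_{\theta\le N m(1/N,X_N^N(s^-))X_1^N(s^-)}$; that for $2\le i\le N-1$, $M_i^N$ is the sum of a $\widetilde{\mathcal N}_i^r$-integral, a $+\widetilde{\mathcal N}_{i-1}^m$-integral and a $-\widetilde{\mathcal N}_i^m$-integral, all with unit prefactor; and that $M_N^N$ is $\tfrac1N$ times the sum of a $+\widetilde{\mathcal N}_{N-1}^m$-integral and a $-\widetilde{\mathcal N}_N^d$-integral. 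These identifications, together with the observation that the rates are bounded (Assumption \ref{assp:rm}) and the process is non-explosive, so that all integrands are a.s.\ bounded on $[0,T]$, show by the usual localization argument that each $M_j^N$ is a local martingale; square-integrability will follow at the end from the first-moment estimate of Proposition \ref{prop:moments}.

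Next I would invoke the elementary facts that for integrands $H,K$ of the above type, $\big\langle \int_0^\cdot\!\int H\,\widetilde{\mathcal N}\big\rangle_t=\int_0^t\!\int H^2\,d\theta\,ds$, that $\big\langle \int\! H\,\widetilde{\mathcal N},\int\! K\,\widetilde{\mathcal N}'\big\rangle\equiv 0$ when $\mathcal N,\mathcal N'$ are independent, and that $\big\langle \int\! H\,\widetilde{\mathcal N},\int\! K\,\widetilde{\mathcal N}\big\rangle_t=\int_0^t\!\int HK\,d\theta\,ds$ when the driving measure is the same. Since $(\mathbb 1_{\theta\le a})^2=\mathbb 1_{\theta\le a}$ and $\int_{\mathbb R^+}\mathbb 1_{\theta\le a}\,d\theta=a$, the diagonal brackets $\langle M_j^N\rangle$ are obtained by summing, over the (independent) point measures entering $M_j^N$, the corresponding rate$\times$population integrals, each weighted by the square of its jump prefactor ($1/N^2$ for $j\in\{1,N\}$, $1$ for $2\le i\le N-1$); a cancellation $\tfrac1{N^2}\cdot N=\tfrac1N$, resp.\ $1\cdot N=N$, then produces exactly the stated expressions.

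For the cross-brackets the key structural point is that the "division'' measures $\mathcal N_i^r$ and the "death'' measure $\mathcal N_N^d$ each appear in a single martingale, whereas the "differentiation'' measure $\mathcal N_i^m$ appears in exactly two of them: it enters $M_i^N$ (or $M_1^N$, $M_N^N$ at the boundaries) with one sign and $M_{i+1}^N$ with the opposite sign, since a differentiation event simultaneously empties compartment $i$ and fills compartment $i+1$. Consequently $\langle M_i^N,M_j^N\rangle\equiv 0$ whenever $|i-j|\ge 2$ (independent driving measures), while for consecutive indices $HK=-\mathbb 1_{\theta\le Nm(i/N,\cdot)X_i^N(\cdot)}$ (up to the prefactors), which after integration in $\theta$ and the same $N$-cancellations gives the claimed negative off-diagonal terms, with the $1/N$ at the two ends coming from the boundary prefactors. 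I do not expect a genuine obstacle here: the whole computation is bookkeeping, and the only point requiring care is tracking signs and the $1/N$ factors and recognizing the band structure of the bracket matrix induced by the sharing of the $\mathcal N_i^m$'s. Finally, taking expectations in the diagonal brackets and using the uniform (in $t\le T$, at fixed $N$) first-moment bound from Proposition \ref{prop:moments} shows $\mathbb E[\langle M_j^N\rangle_t]<\infty$, upgrading each $M_j^N$ from a local to a genuine square-integrable martingale and completing the proof.
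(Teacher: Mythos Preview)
Your proposal is correct and follows essentially the same approach as the paper: write each $M_j^N$ as a sum of stochastic integrals against the compensated Poisson measures $\widetilde{\mathcal N}_i^j$, exploit the mutual independence of these measures, and use $\int_{\mathbb R^+}\mathbb 1_{\theta\le a}\,d\theta=a$ to read off the brackets. The paper's proof is just a slightly more explicit write-up of exactly this bookkeeping, and your additional remark on upgrading to square-integrability via the moment bound is a welcome (if routine) justification that the paper leaves implicit.
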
 \, \\

\begin{proof}
Recall that the compensated measures $(\mathcal{N}_i^j, 1 \leq i \leq N\,, \,j \in \{ r, m, d\})$ are all independent. The martingale $M_1(t)$ is given by
\begin{align*}
    M_1^N(t) =& \frac{1}{N} \, \int_0^t \int_{\mathbb{R}^+} \mathbb{1}_{\theta \leq N \, r(1/N, X_N^N(s^-)) \, X_1^N(s^-)} \, \mathcal{\widetilde{N}}_1^r(ds, \, d\theta) \\&-  \frac{1}{N} \, \int_0^t \int_{\mathbb{R}^+} \mathbb{1}_{\theta \leq N \, m(1/N, X_N^N(s^-)) \, X_1^N(s^-)} \, \mathcal{\widetilde{N}}_1^m(ds, \, d\theta).
\end{align*}
By independence of $\mathcal{\widetilde{N}}_1^r(ds, \, d\theta)$ and $\mathcal{\widetilde{N}}_1^m(ds, \, d\theta)$ we can derive
\begin{align*}
    \left< M_1^N \right>_t =& \frac{1}{N^2} \, \int_0^t \int_{\mathbb{R}^+} \mathbb{1}_{\theta \leq N \, r(1/N, X_N^N(s^-)) \, X_1^N(s^-)} \, d\theta \, ds +  \frac{1}{N^2} \, \int_0^t \int_{\mathbb{R}^+} \mathbb{1}_{\theta \leq N \, m(1/N, X_N^N(s^-)) \, X_1^N(s^-)} \, d\theta \, ds \\
    =& \frac{1}{N} \, \int_0^t   r\left(\frac{1}{N}, X_N^N(s)\right) \, X_1^N(s) \, ds \, +  \frac{1}{N} \, \int_0^t  m\left(\frac{1}{N}, X_N^N(s)\right) \, X_1^N(s) \, d\theta \, ds.
\end{align*}
With similar arguments, for $2 \leq i \leq N - 1$ we obtain
\begin{align*}
    \left< M_i^N \right>_t =& \int_0^t \int_{\mathbb{R}^+} \mathbb{1}_{\theta \leq r(i/N, X_N^N(s^-))\  X_i^N(s^-)} \, d\theta \, ds \\&+ \int_0^t \int_{\mathbb{R}^+} \mathbb{1}_{\theta \leq N \, m((i - 1)/N, X_N^N(s^-)) \, X_{i - 1}^N(s^-)} \, d\theta \, ds + \int_0^t\int_{\mathbb{R}^+} \mathbb{1}_{\theta \leq N \, m(i/N, X_N^N(s^-)) \, X_i^N(s^-)} \, d\theta \, ds \\
    =& \int_0^t r\left(\frac{i}{N}, X_N^N(s)\right)\  X_i^N(s) \, ds + \int_0^t N \, m\left(\frac{i - 1}{N}, X_N^N(s)\right) \, X_{i - 1}^N(s) \, ds\\&+ \int_0^t N m\left(\frac{i}{N}, X_N^N(s)\right) \, X_i^N(s) \, ds,
\end{align*}
and for $M_N(t)$ we deduce
\begin{align*}
    \left< M_N^N \right>_t =& \frac{1}{N^2} \, \int_0^t \int_{\mathbb{R}^+} \mathbb{1}_{\theta \leq N \, m((N - 1)/N, X_N^N(s^-)) \, X_{N - 1}^N (s^-)} \, d\theta \, ds + \frac{1}{N^2} \, \int_0^t \int_{\mathbb{R}^+} \mathbb{1}_{\theta \leq N \, d \, X_N^N(s^-)} \, d\theta \, ds \\
    =& \frac{1}{N} \, \int_0^t \, m\left(\frac{N - 1}{N}, X_N^N(s))\right) \, X_{N - 1}^N (s) \, ds + \frac{1}{N} \, \int_0^t d \, X_N^N(s) \, ds. 
\end{align*}

On the other hand, we have that
\begin{align*}
    \left< M_1^N, M_2^N \right>_t =& -  \frac{1}{N} \, \int_0^t \int_{\mathbb{R}^+} \mathbb{1}_{\theta \leq N \, m(1/N, X_N^N(s^-)) \, X_1^N(s^-)} \, d\theta \, ds \\
    =& -  \int_0^t m \left( \frac{1}{N}, X_N^N(s) \right) \, X_1^N(s) \, ds.
\end{align*}
Similarly, we can find $\left< M_i, M_{i + 1} \right>_t$ for $2 \leq i \leq N - 2$ and $\left< M_{N - 1}, M_N \right>_t$. Lastly, for any other $i, j$, given the independence of their compensated measures, it follows that
\begin{equation*}
    \left< M_i, M_{j} \right>_t = 0.
\end{equation*}

\end{proof}

\begin{appxlem} \label{lem:muk}
   Let us consider $M_F$, the set of finite measures on $\mathbb R$.
   We also consider a measure-valued function $\mu \in \mathbb{D} (\mathbb{R}_+, M_F)$ and a compact domain $D$ of $\mathbb{R}_+ \times \mathbb{R}$ such that 
    \begin{equation*}
        \int_D \mu(s, dx) ds < \infty.
    \end{equation*}
    Then,  for any function $g$ continuous on $D$, we have that
    \begin{equation*}
        \int_D g(s, x) \mu^k(s, dx) ds \rightarrow \int_D g(s, x) \mu(s, dx) ds
    \end{equation*}
    as $k \rightarrow \infty$, where
    \begin{equation}
    \label{def:muk}
        \mu^k(s, dx) = \sum_{i \in \mathbb{Z}} \mu\left( s, \left( \frac{i - 1}{k}, \frac{i}{k} \right] \right) \delta_{\frac{i}{k}}.
    \end{equation}
\end{appxlem}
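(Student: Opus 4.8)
The plan is to exploit the fact that $\mu^k$ is obtained from $\mu$ by transporting each piece of mass in $((i-1)/k,i/k]$ to the single point $i/k$, i.e.\ by a displacement of at most $1/k$ in the $x$–direction and none in the $s$–direction, and then to combine the uniform continuity of $g$ on the compact set $D$ with dominated convergence. I would first encode the discretization through the rounding map $r_k(x)=\lceil kx\rceil/k$, which sends every $x\in((i-1)/k,i/k]$ to $i/k$ and satisfies $x\le r_k(x)\le x+1/k$. For any bounded measurable $h$ one then has $\int_{\mathbb R}h(x)\,\mu^k(s,dx)=\int_{\mathbb R}h(r_k(x))\,\mu(s,dx)$, since both sides equal $\sum_i h(i/k)\,\mu(s,((i-1)/k,i/k])$. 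Applying this with $h(x)=\mathbb 1_D(s,x)\,g(s,x)$ and integrating in $s$ gives
\[
\int_D g\,\mu^k\,ds-\int_D g\,\mu\,ds=\int_{\mathbb R_+}\!\!\int_{\mathbb R}\Big[\mathbb 1_D(s,r_k(x))\,g(s,r_k(x))-\mathbb 1_D(s,x)\,g(s,x)\Big]\,\mu(s,dx)\,ds,
\]
so everything reduces to showing that this integrand tends to $0$ in an $L^1(\mu(s,dx)\,ds)$ sense.

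Next I would control the genuinely continuous part. Extending $g$ to a bounded uniformly continuous function on a fixed compact $x$–neighborhood of $D$ (Heine--Cantor, together with Tietze to cover the points $r_k(x)$ lying just outside $D$), uniform continuity yields $|g(s,r_k(x))-g(s,x)|\le \omega_g(1/k)$ with $\omega_g(1/k)\to 0$. Hence, on the set where $\mathbb 1_D(s,r_k(x))=\mathbb 1_D(s,x)$, the integrand is bounded by $\omega_g(1/k)$, and integrating against the finite measure $\mu(s,dx)\,ds$ produces a contribution at most $\omega_g(1/k)\int_D\mu\,ds\to 0$. This step is routine; the finiteness hypothesis $\int_D\mu\,ds<\infty$ is exactly what makes the modulus-of-continuity bound effective.

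The only real obstacle is the indicator $\mathbb 1_D$, which is discontinuous: $r_k$ may push a point across the $x$–boundary of $D$, so $\mathbb 1_D(s,r_k(x))$ need not converge to $\mathbb 1_D(s,x)$ there. I would dispose of this by dominated convergence, the integrand being bounded by $2\,||g||_\infty$ and supported in a fixed compact neighborhood of $D$ of finite $\mu(s,dx)\,ds$–mass; it then suffices that $\mathbb 1_D(s,r_k(x))\to\mathbb 1_D(s,x)$ for $\mu(s,dx)\,ds$–almost every $(s,x)$, which holds provided $\mu(s,dx)\,ds$ charges no part of the $x$–boundary of $D$. In every application of this lemma (Proposition \ref{prop:den}) this is automatic: there $D=[0,t]\times[0,1]$, all atoms $i/k$ of $\mu^k$ remain in $[0,1]$, the $s$–variable is untouched by $r_k$, and by Proposition \ref{prop:iden} the limit satisfies $\mu_s(\{0\})=\mu_s(\{1\})=0$ for a.e.\ $s$, so no mass sits on the $x$–boundary. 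Combining the uniform-continuity estimate with this dominated-convergence argument shows the difference above vanishes as $k\to\infty$, which is the assertion.
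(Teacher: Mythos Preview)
Your argument is essentially the paper's: extend $g$ by Tietze and use uniform continuity of the extension together with the fact that $\mu^k$ displaces mass by at most $1/k$ in $x$; your rounding map $r_k$ is just a compact way of writing the paper's partition into strips $\mathcal{R}_i^k=\{(s,x):x\in((i-1)/k,i/k]\}$. The paper sidesteps your boundary discussion by extending $g$ to a rectangle $\mathcal{R}=[t_1,t_2]\times[n_1,n_2]$ with \emph{integer} $x$-endpoints and proving the estimate on all of $\mathcal{R}$; integer endpoints guarantee $r_k([n_1,n_2])\subseteq[n_1,n_2]$, so no mass is pushed out of $\mathcal{R}$ by the discretization. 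In the intended application $\mu_s$ is supported on $[0,1]$ and $r_k([0,1])\subseteq[0,1]$, hence $\mathbb{1}_D(s,r_k(x))=\mathbb{1}_D(s,x)$ identically on the support of $\mu$; your appeal to $\mu_s(\{0\})=\mu_s(\{1\})=0$ and the dominated-convergence step on the boundary are therefore unnecessary, and the finiteness of $\mu(s,dx)\,ds$ on a \emph{neighborhood} of $D$ (which your dominated-convergence step would require but the lemma does not assume) is never actually used.
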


\begin{proof}
By Tietze Theorem, we extend the function $g$ in a continuous way from the set $D$ to a rectangle $\mathcal{R} = [t_1, t_2] \times [n_1, n_2] \subset \mathbb{R}^2$ which contains $D$ with $n_1, n_2 \in \mathbb{Z}$.

For any $k \in \mathbb{N}$, we divide the set $\mathcal{R}$ in disjoint subsets of the form $\mathcal{R}_i^k = \{(s, x) \in \mathcal{R} : x \in ((i - 1)/k, i/k] \}$ for $i \in \mathbb{Z}$. Notice that $\mathcal{R}_i^k = \emptyset$ for $|i|$ large enough by construction. Then, for any $i \in \mathbb{Z}$ and any $(s, x) \in \mathcal{R}_i^k$ we have that
\begin{equation*}
    \left| g(s, x) - g \left( s, \frac{i}{k} \right) \right| \leq \eta^k,
\end{equation*}
where
\begin{equation} \label{eq:eta}
    \eta^k = \sup_{\substack{(s, x), (s,x') \in \mathcal{R}, \\ |x - x'| \leq 1/k}} |g(x, s) - g(x', s)|.
\end{equation}
By integrating over $\mathcal{R}_i^k$ we obtain
\begin{align*}
    \left| \int_{\mathcal{R}_i^k} g(s, x) \mu(s, dx) \, ds - \int_{\mathcal{R}_i^k} g \left( s, \frac{i}{k} \right) \mu(s, dx) \, ds \right| \leq \eta^k \int_{\mathcal{R}_i^k} \mu(s, dx) \, ds.
\end{align*}
By summing over all $i \in \mathbb{Z}$ in we derive
\begin{equation} \label{eq:boundeta}
    \left| \int_{\mathcal{R}} g(s, x) \, \mu(s, dx)ds - \int_{\mathcal{R}} g \left( s, x \right) \, \mu^k(s, dx) \, ds \right| \leq \eta \int_{D} \mu(s, dx) \, ds,
\end{equation}
since
\begin{align*}
    \sum_{i \in \mathbb{Z}} \int_{\mathcal{R}_i^k} g \left( s, \frac{i}{k} \right) \, \mu(s, dx) \, ds &= \int_{\mathcal{R}} \sum_{i \in \mathbb{Z}}  g \left( s, \frac{i}{k} \right) \mu\left( s, \left( \frac{i - 1}{k}, \frac{i}{k} \right] \right)  ds \\
    &= \int_{\mathcal{R}} g \left( s, x \right) \, \mu^k(s, dx) \, ds.
\end{align*}
Notice that $\eta^k \rightarrow 0$ when $k \rightarrow \infty$ by construction in \eqref{eq:eta} and by assumption $\int_{D} \mu(s, dx)ds < \infty$. Then, using \eqref{eq:boundeta} we prove the Lemma.
\end{proof}

\end{document}